\documentclass[10pt, oneside]{article}
\usepackage{mathrsfs}
\usepackage{amsfonts}
\usepackage{amsmath,amsfonts,amssymb,amsthm}
\usepackage{caption}
\usepackage{subfigure}
\usepackage{cite}
\usepackage{color}
\usepackage{graphicx}
\usepackage{subfigure}
\usepackage{float}
\usepackage{paralist}
\usepackage{indentfirst}
\usepackage{authblk}

\usepackage[colorlinks,
linkcolor=red,
citecolor=blue
]{hyperref}

\usepackage[T1]{fontenc}
\DeclareMathOperator{\dive}{div}

\newtheorem{theorem}{Theorem}[section]
\newtheorem{lemma}{Lemma}[section]
\newtheorem{prop}{Proposition}[section]

\newtheorem{defn}{Definition}[section]


\def\bma#1\ema{{\allowdisplaybreaks\begin{split}#1\end{split}}}

\textheight=220truemm
%
\textwidth=160truemm

\hoffset=0truemm
\voffset=0truemm

\topmargin=-5truemm
 \oddsidemargin=0truemm
\evensidemargin=0truemm
\linespread{1.3}
\numberwithin{equation}{section}

\begin{document}
\title{{\LARGE \textbf{Global well-posedness of one-dimensional compressible Navier-Stokes-Vlasov system}}}
\author[a,b]{Hai-Liang  Li  \thanks{
E-mail:		hailiang.li.math@gmail.com (H.-L. Li).}}

\author[a,b]{Ling-Yun Shou \thanks{Corresponding author. E-mail: shoulingyun11@gmail.com(L.-Y Shou).}}
    \affil[a]{School of Mathematical Sciences,
	Capital Normal University, Beijing 100048, P.R. China}
\affil[b]{Academy for Multidisciplinary Studies, Capital Normal University, Beijing 100048, P.R. China}

\date{}
\renewcommand*{\Affilfont}{\small\it}
\maketitle
\begin{abstract}
A fluid-particle model is investigated in the present paper, which consists of the compressible Navier-Stokes equations coupled with the Vlasov equation though a nonlinear drag force. We consider the initial value problem for the one-dimensional compressible Navier-Stokes-Vlasov system and establish the global existence and uniqueness of the weak solution for general initial data in either spatial periodic domain or spatial real line, which is shown to be a strong solution for regular initial data. Moreover, it is showed that for spatial periodic domain, both the fluid velocity and the macroscopic velocity of the particles converge to the same speed exponentially in time, and the compact support of distribution function associated with velocity variable converges to a point set exponentially in time. 
\end{abstract}
\noindent{\textbf{Key words:} Fluid-particle model; Nonlinear drag force; Global well-posedness, Large time behavior  }
\section{Introduction}
The fluid-particle interaction models are introduced to simulate the motion of particles dispersed in a fluid \cite{bernard1,bernard2,caflisch1,constantin1,desv1,jabin1,lin1,o1,williams2}, and are used widely in many applied scientific areas such as diesel engines, rocket propulsors, aerosols and sprays, pollution settling processes, chemical engineering, wastewater treatment or biomedical flows \cite{am1,baranger1,berres2,berres1,boudin2,o1,williams1,williams2}, etc.

 In this paper, we consider the initial value problem (IVP) of the following one-dimensional compressible Navier-Stokes-Vlasov system for fluid-particle motion:
\begin{equation}\label{m1}
\left\{
 \begin{split}
& \rho_{t}+(\rho u)_{x}=0,\\
& (\rho u)_{t}+(\rho u^2)_{x}+(P(\rho))_{x}=(\mu(\rho)u_{x})_{x}-\int_{\mathbb{R}} F_{d}fdv,\\
&f_{t}+vf_{x}+(F_{d}f)_{v}=0,\quad (x,v)\in \Omega\times\mathbb{R},~t>0,
\end{split}
\right .
\end{equation}
 with the initial data
 \begin{align}
&(\rho(x,0),u(x,0),f(x,v,0))=(\rho_{0}(x),u_{0}(x),f_{0}(x,v)),\quad (x,v)\in \Omega\times\mathbb{R},\label{d}
\end{align}
where $\rho=\rho(x,t)$ and $u=u(x,t)$ denote the density and velocity associated with the dense phase (fluid) respectively, governed by the compressible Navier-Stokes equations $(\ref{m1})_{1}$-$(\ref{m1})_{2}$, and $f=f(x,v,t)$ is the distribution function associated with the dispersed phase (particles), governed by the Vlasov equation $(\ref{m1})_{3}$. The spatial domain $\Omega$ is either the periodic domain $\mathbb{T}=\mathbb{R}/ \mathbb{Z}$ or the real line $\mathbb{R}$. The pressure $P(\rho)$ and the viscosity coefficient $\mu(\rho)$ satisfy
\begin{align}
P(\rho)=A\rho^{\gamma},\quad \mu(\rho)=\mu_{0}+\mu_{1}\rho^{\beta},\label{n111}
 \end{align}
and the drag force $F_{d}$ takes the form
 \begin{align}
F_{d}=\kappa\rho(u-v),\label{kappa}
\end{align}
where $A,\mu_{0}, \mu_{1},\kappa, \gamma$ and $\beta$ are constants satisfying
$$
 A>0,\quad \mu_{0}>0,\quad \mu_{1}>0,\quad \kappa>0,\quad \gamma>1,\quad \beta\geq 0.
$$
 Without loss of generality, we take $A=\mu_{0}=\mu_{1}=\kappa=1$ in the present paper.

There are many important progress made recently on the analysis of existence and dynamical behaviors of solutions for fluid-particle models \cite{bae1,baranger2,boudin1,carrillo4,choi1,choi3,duan1,glass1,goudon1,hamdache1,han2,han3,lf1,wangd1}.  Among them, concerning with multi-dimensional incompressible Navier-Stokes-Vlasov equations, the global existence of weak solutions in spatial periodic domain was established in \cite{boudin1}, and the asymptotic stability of stationary states in a pipe with partially absorbing boundary conditions was investigated in \cite{glass1}. The global well-posedness and large time behavior of weak solutions for three-dimensional incompressible Navier-Stokes-Vlasov system were showed for small initial data in both spatial periodic domain \cite{han2} and spatial whole space \cite{han3} respectively. And the global existence of weak solutions for three-dimensional inhomogeneous Navier-Stokes-Vlasov equations was studied in \cite{wangd1}. For compressible Navier-Stokes-Vlasov system, the exponential time stability of global solutions in spatial periodic domain was obtained in \cite{bae1,choi1,lhl1} provided that the global solutions existed and satisfied some uniform estimates, and the finite time blow-up phenomena of classical solutions in the presence of vacuum was proved in \cite{choi3} for any space dimension.

However, there are no any results about the global well-posedness problem for compressible Navier-Stokes-Vlasov equations as we know so far. The purpose of this paper is to deal with the existence, uniqueness and regularity of global weak solution to the IVP $(\ref{m1})$-$(\ref{kappa})$ for general initial data.

Before stating the main results, we present some notations below.

\noindent
\textbf{Notations.}
 Throughout this paper, $C>0$ and $c>0$ denote some constants independent of time, and $C_{T}>0$ is a constant which may depend on time $T$. For $p\in[1,\infty]$ and integer $k\geq 0$, the simplified  Lebesgue, Sobolev and H$\rm{\ddot{o}}$lder norms are given as follows:
\begin{equation}\nonumber
\left\{
\begin{split}
&\|\cdot \|_{L^{p}}=:\|\cdot \|_{L^{p}(\Omega )},\quad \|\cdot \|_{H^{k}}=:\|\cdot\|_{H^{k}(\Omega )},\quad\|\cdot \|_{C^{k}}=:\|\cdot\|_{C^{k}(\Omega )},\\
& \|\cdot \|_{\mathcal{L}^p}=:\|\cdot \|_{L^p(\Omega \times\mathbb{R} )},\quad \|\cdot \|_{\mathcal{H}^{k}}=:\|\cdot \|_{H^{k}(\Omega \times\mathbb{R})},\quad \|\cdot \|_{\mathcal{C}^{k}}=:\|\cdot \|_{C^{k}(\Omega \times\mathbb{R})}.
\end{split}
\right .
\end{equation}
Let $n$ and $w$ denote the average local density and velocity of distribution function $f$ to the Vlasov equation $(\ref{m1})_{3}$ respectively as
\begin{align}
n(x,t)=:\int_{\mathbb{R}} f(x,v,t)dv,\quad nw(x,t)=:\int_{\mathbb{R}}vf(x,v,t)dv.\label{nw}
\end{align}

First, we have the following result for the spatial periodic domain:

\begin{theorem}\label{theorem11}
Let $\Omega=\mathbb{T}$. Assume that the initial data $(\rho_{0},u_{0},f_{0})$ satisfies
\begin{equation}\label{a1}
\left\{
\begin{split}
&\inf_{x\in\mathbb{T} }\rho_{0}(x)>0,\quad \rho_{0}\in W^{1,\infty}(\mathbb{T}),\quad u_{0}\in H^1(\mathbb{T} ),\\
 &0\leq f_{0}\in L^{\infty}(\mathbb{T}\times\mathbb{R}),\quad \text{{\rm{Supp}}}_{v}f_{0}(x,\cdot)\subset \{v\in\mathbb{R}~\big{|}~|v|\leq R_{0}\},\quad  x\in\mathbb{T},
\end{split}
\right .
\end{equation}
 where $R_{0}>0$ is a constant. Then the IVP $(\ref{m1})$-$(\ref{kappa})$ admits a unique global weak solution $(\rho,u,f)$ satisfying for any $T>0$ that
\begin{equation}\label{r1}
\left\{
\begin{split}
& \rho\in C([0,T];H^1(\mathbb{T}))\cap L^{\infty}(0,T;W^{1,\infty}(\mathbb{T})),\\
&u\in C([0,T];H^1(\mathbb{T} ))\cap L^2(0,T;H^2(\mathbb{T} )),\quad u_{t}\in L^2(0,T;L^2(\mathbb{T})),\\
 &f\in C([0,T];L^1(\mathbb{T}\times\mathbb{R}))\cap L^{\infty}(0,T;L^{\infty}(\mathbb{T} \times\mathbb{R})),
\end{split}
\right .
\end{equation}
and
\begin{equation}\label{r11}
\left\{
\begin{split}
&0<\rho_{-}\leq \rho\leq \rho_{+},\quad \sup_{t\in [0,T]}\|\rho(t)\|_{H^1}\leq C_{0},\\
&0\leq f\leq e^{\rho_{+}T}\|f_{0}\|_{\mathcal{L}^{\infty}},\\
&\text{{\rm{Supp}}}_{v}f(x,\cdot, t)\subset \{v\in\mathbb{R}~|~|v|\leq R_{1}\},\quad(x,t)\in \mathbb{T}\times[0,T],
\end{split}
\right.
\end{equation}
where $\rho_{-}>0, \rho_{+}>0, C_{0}>0$ and $R_{1}>0$ are constants independent of time $T>0$.

 In addition to $(\ref{a1})$, if it further holds
\begin{align}
&\rho_{0}\in H^2(\mathbb{T} ),\quad u_{0}\in H^2(\mathbb{T} ),\quad f_{0}\in C^{1}(\mathbb{T} \times\mathbb{R}),\label{a11}
\end{align}
then $(\rho, u, f)$ is indeed a global strong solution satisfying for any $T>0$ that
\begin{equation}
\left\{
\begin{split}
&\rho\in C([0,T];H^2(\mathbb{T} )),~~\rho_{t}\in C([0,T];H^1(\mathbb{T} )),\\
&u\in C([0,T];H^2(\mathbb{T} )),\quad t^{\frac{1}{2}}u\in L^{\infty}(0,T;H^3(\mathbb{T} )),\quad t^{\frac{1}{2}}u_{t}\in L^{\infty}(0,T;H^1(\mathbb{T} )),\\
&f\in C([0,T];C^{1}(\mathbb{T} \times\mathbb{R} )),\quad f_{t}\in C([0,T];C^{0}(\mathbb{T} \times\mathbb{R} )).\label{r12}
\end{split}
\right .
\end{equation}
\end{theorem}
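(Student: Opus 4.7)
The plan is to build the solution through a linearization--iteration scheme, close it with uniform a priori estimates, and upgrade to the stated regularity. To initialize the iteration, fix a smooth approximation $(\bar\rho,\bar u)$ in the coefficients: the Vlasov equation is solved along its characteristics $\dot X = V,\ \dot V = \kappa\bar\rho(\bar u - V)$ to obtain $f$; the continuity equation is solved by the method of characteristics to obtain $\rho$; and the momentum equation, which is a linear non-degenerate parabolic equation for $u$ (non-degeneracy is ensured by $\mu(\rho)\ge\mu_0>0$), is solved by standard theory. A contraction argument in a high-regularity space yields a local-in-time solution of the full nonlinear system.

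The heart of the argument is the derivation of uniform a priori bounds. Testing $(\ref{m1})_2$ with $u$ and $(\ref{m1})_3$ with $\tfrac{1}{2}v^2$, and using the drag cancellation
\[
-\int_{\mathbb{T}}u\!\int_{\mathbb{R}}F_d f\,dv\,dx + \iint_{\mathbb{T}\times\mathbb{R}} v\,F_d f\,dv\,dx = -\iint_{\mathbb{T}\times\mathbb{R}}\kappa\rho(u-v)^2 f\,dv\,dx,
\]
yields the basic energy identity, which controls $\rho u^2$, $\rho^\gamma$, $\iint v^2 f$, and the dissipation of $\sqrt{\mu(\rho)}\,u_x$. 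Pointwise bounds on $\rho$ exploit the 1D structure: setting $\sigma'(\rho)=\mu(\rho)/\rho^2$, the continuity equation gives $\sigma(\rho)_t + u\sigma(\rho)_x = -\mu(\rho)u_x/\rho$, so combining with the non-conservative form of the momentum equation and integrating in $x$ produces a transport-type equation for $\sigma(\rho) + \int_0^x\rho u\,dy$ (with a lower-order drag contribution) whose right-hand side is controlled by energy-type quantities; a maximum-principle argument along characteristics then yields $0<\rho_-\le\rho(x,t)\le\rho_+$ with constants independent of $T$. For the kinetic part, the Vlasov characteristics give $\dot f = \kappa\rho f$, hence $\|f(t)\|_{\mathcal{L}^\infty}\le\|f_0\|_{\mathcal{L}^\infty}\exp(\kappa\|\rho\|_{L^\infty}\,t)$, while $|\dot V|\le\kappa\|\rho\|_{L^\infty}(\|u\|_{L^\infty}+|V|)$ together with the 1D embedding $\|u\|_{L^\infty}\le C\|u\|_{H^1}$ propagates a bound $|V(t)|\le R_1$ on the $v$-support by Gronwall. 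With these controls in hand, standard energy estimates on $u_x$ and $u_t$ complete the regularity claimed in $(\ref{r1})$.

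With the uniform bounds established, passage to the limit in the approximations is carried out by Aubin--Lions compactness; the drag coupling $\int F_d f\,dv = \kappa\rho(nu-nw)$ converges strongly because the compact $v$-support of $f$ together with its $\mathcal{L}^\infty$ bound makes $n$ and $nw$ uniformly bounded. Uniqueness is shown by testing the difference of the two momentum equations with $u_1-u_2$, and estimating the difference of the Vlasov equations in $\mathcal{L}^1$ using its transport structure with Lipschitz field $(v,\kappa\rho(u-v))$; a Gronwall inequality then forces the two solutions to coincide. Under the stronger hypotheses $(\ref{a11})$, the $H^2$ regularity of $u$ and $\rho$ is propagated via parabolic regularity of the momentum equation and the transport structure of the continuity equation; the weights $t^{1/2}$ in $(\ref{r12})$ absorb the lack of compatibility at $t=0$ between the initial data and the elliptic part of the momentum operator. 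The $C^1$ regularity of $f$ follows by differentiating the Vlasov equation along its characteristics.

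The main obstacle is the pointwise $\rho$ bounds in the presence of the Vlasov-induced drag source. Because the drag term depends on $f$, whose characteristics are in turn driven by $\rho$ and $u$, the estimates must close simultaneously for $\rho$, $u$, and the $v$-support of $f$ without any loss in time; carefully tracking how the drag contribution feeds back into the $\sigma(\rho)$ transport analysis is what enables the result to hold for arbitrary (not necessarily small) initial data, in contrast to the small-data regime of \cite{han2,han3}.
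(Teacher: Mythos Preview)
Your outline captures the broad architecture (local theory, energy identity, density bounds via a transport equation for an ``effective'' quantity, kinetic characteristics, compactness), but there are two places where the sketch does not yet close, and they are precisely the places where the paper introduces new ideas.

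\textbf{Time-uniform bounds.} The statement asserts that $\rho_-,\rho_+,C_0,R_1$ are \emph{independent of $T$}. Your $\sigma(\rho)$ argument with the drag treated as a ``lower-order contribution'' will produce bounds that grow with $T$ (the drag integral is only in $L^2_t$ by energy, so integrating it gives a $T^{1/2}$ factor), and your Gronwall for the $v$-support will give an $R_1$ that is exponential in $T$. The paper's mechanism is different. It introduces the effective velocity $u+\mathcal I(n)$, where $\mathcal I$ is a primitive operator on $\mathbb T$, which \emph{removes} the drag term from the momentum equation up to a spatially constant remainder; this makes $\theta(\rho)+\mathcal I(\rho(u+\mathcal I(n)))$ satisfy a transport equation with $\rho^\gamma$ as a damping term, and a Zlotnik-type maximum principle then gives the uniform upper bound. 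The uniform lower bound is obtained separately by first proving (via a relative-entropy argument for the Navier--Stokes part) that $\int_{\mathbb T}\rho^\gamma\,dx$ stays bounded away from zero for large time, and then feeding this back into the transport formula. For the uniform $H^1$ bound on $\rho$, a \emph{second} effective velocity $U=u+\mathcal I(n)+\rho^{-2}\mu(\rho)\rho_x$ is shown to satisfy a genuinely damped equation (the damping coefficient is $\gamma\rho^{\gamma+1}\mu(\rho)^{-1}\ge c>0$ once $\rho\ge\rho_-$), which gives $\|\rho_x\|_{L^2}$ uniform in time. Finally, $R_1$ uniform in $T$ comes from the $e^{-\rho_- t}$ decay in the $V$-characteristic once $\rho\ge\rho_->0$ is known. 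None of these mechanisms is visible in your sketch.

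\textbf{Uniqueness at the weak regularity.} You propose to estimate the Vlasov difference in $\mathcal L^1$ using that the field $(v,\kappa\rho(u-v))$ is Lipschitz. But at the regularity of (\ref{r1}) you only have $u\in C([0,T];H^1)\hookrightarrow C([0,T];C^{1/2})$, which is not Lipschitz in $x$ uniformly in time; the $L^2_tH^2_x$ bound gives $u_x\in L^2_tL^\infty_x$, which is enough for a Gronwall with $L^1_t$ coefficient on the characteristic flows, but a direct $\mathcal L^1$ stability estimate for $f_1-f_2$ also requires controlling the $x$-derivative of $f$, which is not available for weak solutions. The paper instead uses a Loeper-type quantity $Q(t)=\tfrac12\iint f_0\big(|X_1-X_2|^2+|V_1-V_2|^2\big)$ on the forward characteristics, pulls the coupling term $\int\rho_2(v-u_2)(u_1-u_2)(f_1-f_2)$ back through the change of variables, and handles the resulting $(u_1-u_2)(X_1)-(u_1-u_2)(X_2)$ difference via the pointwise maximal-function inequality $|g(x)-g(y)|\le C|x-y|(Mg'(x)+Mg'(y))$ together with $\|Mg'\|_{L^2}\le C\|g'\|_{L^2}$. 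This is what allows uniqueness to close at exactly the $H^1$ regularity of $u$.
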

~\par

Next, it is shown that as time evolves, the global solution $(\rho,u,f)$ given by Theorem \ref{theorem11} converges to its equilibrium state exponentially.

\begin{theorem}\label{decay}
Let the assumptions of Theorem \ref{theorem11} hold, and $(\rho,u,f)$ be the global solution to the IVP $(\ref{m1})$-$(\ref{kappa})$ given by Theorem \ref{theorem11}. Then it holds
\begin{equation}\label{decay11}
\left\{
\begin{split}
&\|(\rho-\overline{\rho_{0}})(t)\|_{C^{0}}+\|(u-u_{s})(t)\|_{L^{2}}\leq C_{1}e^{-\lambda t},\\
&\|\big{(}\sqrt{f}(v-u_{s})\big{)}(t)\|_{\mathcal{L}^2}+{\rm{W}}_{1}\big{(}f(t),(n\delta(v-u_{s}))(t)\big{)}\leq C_{1}e^{-\lambda t},\\
&\text{{\rm{Supp}}}_{v}f(x,\cdot, t)\subset \{v\in\mathbb{R}~|~|v-u_{s}|\leq C_{1}e^{-\lambda t}\},\quad (x,t)\in \mathbb{T}\times \mathbb{R}_{+},
\end{split}
\right.
\end{equation}
where ${\rm{W}}_{1}(f,g)$ is the Wasserstein distance given by Definition 1.1, $C_{1}>0$ and $\lambda>0$ are two constants independent of time, and $\overline{\rho_{0}}$ and $u_{s}$ are given by
\begin{equation}\label{rhoinfty}
\left\{
\begin{split}
&\overline{\rho_{0}}=:\int_{\mathbb{T}}\rho_{0}(x)dx,\\
& u_{s}=:\frac{\int_{\mathbb{T}}\rho_{0}u_{0}(x)dx+\int_{\mathbb{T}\times\mathbb{R}}vf_{0}(x,v)dvdx} {\int_{\mathbb{T}}\rho_{0}(x)dx+ \int_{\mathbb{T}\times\mathbb{R}}f_{0}(x,v)dvdx}.
\end{split}
\right.
\end{equation}
\end{theorem}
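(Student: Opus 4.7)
The plan is to exhibit a relative-entropy Lyapunov functional measuring the distance from the equilibrium $(\overline{\rho_0}, u_s)$ (with the particle distribution concentrated on $v = u_s$) and to show it decays exponentially in time. First I would derive three conservation laws from $(\ref{m1})$: the fluid mass $\int_\mathbb{T}\rho \, dx \equiv \overline{\rho_0}$, the particle mass $\int_{\mathbb{T}\times\mathbb{R}} f\, dvdx$, and the total momentum $\int_\mathbb{T}\rho u\, dx + \int_{\mathbb{T}\times\mathbb{R}} v f\, dvdx$; the constancy of the total momentum validates the definition $(\ref{rhoinfty})$ of $u_s$ and provides the key algebraic identity
$$\int_\mathbb{T}\rho(u-u_s)\,dx + \int_{\mathbb{T}\times\mathbb{R}}(v-u_s)f\,dvdx = 0.$$

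I would then introduce the relative energy
$$\mathcal{E}(t) := \int_\mathbb{T} H(\rho|\overline{\rho_0})\,dx + \frac{1}{2}\int_\mathbb{T}\rho(u-u_s)^2\,dx + \frac{1}{2}\int_{\mathbb{T}\times\mathbb{R}}f(v-u_s)^2\,dvdx,$$
where $H(\rho|\overline{\rho_0}) := \frac{\rho^\gamma - (\overline{\rho_0})^\gamma}{\gamma-1} - (\overline{\rho_0})^{\gamma-1}(\rho - \overline{\rho_0}) \geq 0$ is the relative pressure potential. Combining the identity for $\frac{d}{dt}\int H\,dx$ obtained via the continuity equation, the energy identity from testing the momentum equation against $u-u_s$, and the second-moment identity from testing the Vlasov equation against $\tfrac{1}{2}(v-u_s)^2$, the pressure cross-terms should cancel after integration by parts on $\mathbb{T}$, while the drag cross-terms should assemble into a nonnegative perfect square, producing the dissipation identity
$$\frac{d\mathcal{E}}{dt} + \int_\mathbb{T}\mu(\rho)u_x^2\,dx + \int_{\mathbb{T}\times\mathbb{R}}\rho f(u-v)^2\,dvdx = 0.$$

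The crux of the proof is a spectral-gap inequality $\mathcal{E}(t) \leq C\mathcal{D}(t)$ for the above dissipation $\mathcal{D}$, which would close the Gronwall argument to yield $\mathcal{E}(t) \leq \mathcal{E}(0)e^{-2\lambda t}$. The uniform two-sided bound $\rho_- \leq \rho \leq \rho_+$ from Theorem \ref{theorem11} makes $H(\rho|\overline{\rho_0})$ equivalent to $(\rho - \overline{\rho_0})^2$. The kinetic parts of $\mathcal{E}$ can be absorbed into $\mathcal{D}$ via the variance decomposition $\int f(v-u_s)^2\,dv = \int f(v-w)^2\,dv + n(w-u_s)^2$ (with $(n, nw)$ as in $(\ref{nw})$), the momentum constraint above, and the 1D Poincar\'e inequality for $u$. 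The main obstacle is bounding $\|\rho - \overline{\rho_0}\|_{L^2}^2$ by $\mathcal{D}$, since the density equation carries no dissipation of its own; I expect to need an auxiliary functional built from the effective viscous flux $G := \mu(\rho)u_x - (P(\rho) - P(\overline{\rho_0}))$, or equivalently from testing the momentum equation against a mean-zero primitive of $\rho - \overline{\rho_0}$, producing a $\|P(\rho) - P(\overline{\rho_0})\|_{L^2}^2$ term in the time derivative of a modified Lyapunov functional $\mathcal{E} + \delta\mathcal{F}$ with $\delta > 0$ sufficiently small.

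Once $\mathcal{E}(t) \leq Ce^{-2\lambda t}$ is established, the $L^2$-decay of $u-u_s$ and of $\sqrt{f}(v-u_s)$ at rate $e^{-\lambda t}$ is immediate, and the $C^0$-decay of $\rho - \overline{\rho_0}$ follows from 1D Sobolev interpolation using the uniform $W^{1,\infty}$ bound on $\rho$ from Theorem \ref{theorem11}. For the contraction of $\text{Supp}_v f$, I would integrate along the Vlasov characteristics $(X(t), V(t))$, which satisfy $\dot X = V$ and
$$\frac{d}{dt}(V-u_s) = -\rho(X,t)(V-u_s) + \rho(X,t)(u(X,t)-u_s).$$
The lower bound $\rho \geq \rho_-$ yields an exponentially damped ODE, while the inhomogeneous term decays in $L^\infty$ (by interpolating the $L^2$-decay of $u - u_s$ with its uniform $H^1$ bound), so Duhamel gives $|V(t)-u_s| \leq Ce^{-\lambda' t}$ uniformly on the support, with $\lambda' := \min(\rho_-, \lambda)/2$ for instance. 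The Wasserstein bound finally follows from
$$\mathrm{W}_1\!\left(f(t),\,n(t)\delta(v-u_s)\right) \leq \int_{\mathbb{T}\times\mathbb{R}}|v-u_s|f\,dvdx \leq \|f\|_{\mathcal{L}^1}^{1/2}\,\|\sqrt{f}(v-u_s)\|_{\mathcal{L}^2},$$
combined with the decay already established for the second factor.
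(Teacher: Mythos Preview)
Your strategy coincides with the paper's: a relative-entropy/dissipation identity, a hypocoercivity correction obtained by testing the momentum equation against a mean-zero primitive of $\rho-\overline{\rho_0}$ to generate dissipation on $\rho$, then Gr\"onwall, Sobolev interpolation for $\rho$, characteristics for the support of $f$, and the $\mathrm{W}_1$ bound via the first moment. Your Lyapunov functional centered at $u_s$ is in fact \emph{identical} to the paper's: the paper writes
\[
\mathcal{E}(t)=\int_{\mathbb{T}}\Big(\tfrac12\rho|u-m_1|^2+\Pi_\gamma(\rho|\overline{\rho_0})\Big)dx+\int_{\mathbb{T}\times\mathbb{R}}\tfrac12|v-m_2|^2 f\,dvdx+\tfrac{\overline{\rho_0}\,\overline{n_0}}{2(\overline{\rho_0}+\overline{n_0})}|m_1-m_2|^2,
\]
with $m_1=\overline{\rho u}/\overline{\rho_0}$, $m_2=\overline{nw}/\overline{n_0}$; using $\int\rho(u-m_1)=\int f(v-m_2)=0$ and the momentum constraint $\overline{\rho_0}(m_1-u_s)+\overline{n_0}(m_2-u_s)=0$ one checks this equals your $\int H+\tfrac12\int\rho(u-u_s)^2+\tfrac12\int f(v-u_s)^2$. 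Your formulation is the more streamlined one; the paper's split into $m_1,m_2$ is what it actually uses to verify coercivity of $\mathcal{D}$ on the kinetic pieces (its inequality (5.23)), which is exactly the computation hidden behind your ``variance decomposition $+$ momentum constraint $+$ Poincar\'e'' sentence.

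One point to repair: in the support-contraction step you invoke a \emph{uniform-in-time} $H^1$ bound on $u$ to get $\|u-u_s\|_{L^\infty}\lesssim e^{-\lambda t}$ by interpolation. Theorem~\ref{theorem11} does not provide this; only $\|\rho\|_{H^1}$ is uniform, while the $H^1$ control on $u$ is $C_T$. The paper circumvents this by writing $\|u-u_s\|_{L^\infty}\lesssim e^{-\lambda t/2}\|u_x\|_{L^2}^{1/2}+e^{-\lambda t}$ and then, inside the Duhamel integral $\int_0^t e^{-\rho_-(t-\tau)}\|u-u_s\|_{L^\infty}(\tau)\,d\tau$, applying H\"older in $\tau$ together with the basic-energy bound $\|u_x\|_{L^2(0,\infty;L^2)}\le E_0^{1/2}$. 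With that adjustment your argument goes through.
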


-\par
Then, we have the following result for the spatial real line:
\begin{theorem}\label{theorem12}
Let $\Omega=\mathbb{R}$. Assume that the initial data $(\rho_{0},u_{0},f_{0})$ satisfies
 \begin{equation}\label{a2}
 \left\{
 \begin{split}
 &\inf_{x\in\mathbb{R}}\rho_{0}(x)>0,\quad \rho_{0}-\widetilde{\rho}\in H^1\cap W^{1,\infty}(\mathbb{R}),\quad u_{0}\in H^1(\mathbb{R}),\\
 &0\leq f_{0}\in L^1\cap L^{\infty}(\mathbb{R}\times\mathbb{R}), \quad \text{{\rm{Supp}}}_{v}f_{0}(x,\cdot)\subset \{v\in\mathbb{R}~\big{|}~|v|\leq \widetilde{R}_{0}\},\quad x\in\mathbb{R},
\end{split}
\right .
\end{equation}
where $\widetilde{\rho}>0$ and $\widetilde{R}_{0}>0$ are two constants. Then the IVP $(\ref{m1})$-$(\ref{kappa})$ admits a unique global weak solution $(\rho,u,f)$ satisfying for any $T>0$ that
\begin{equation}
\left\{
\begin{split}
&\rho-\widetilde{\rho}\in C([0,T];H^1(\mathbb{R}))\cap L^{\infty}(0,T; W^{1,\infty}(\mathbb{R})),\\
&u\in C([0,T];H^1(\mathbb{R}))\cap L^2(0,T;H^2(\mathbb{R})),\quad u_{t}\in L^2(0,T;L^2(\mathbb{R})),\\
 &f\in C([0,T];L^1(\mathbb{R}\times\mathbb{R}))\cap L^{\infty}(0,T;L^{\infty}(\mathbb{R}\times\mathbb{R})),\label{r2}
\end{split}
\right .
\end{equation}
and
\begin{equation}\label{r21}
\left\{
\begin{split}
&0<\rho_{T}^{-1}\leq\rho\leq \rho_{T},\\
&0\leq f\leq e^{\rho_{T}T}\|f_{0}\|_{\mathcal{L}^{\infty}},\\
&{\rm{Supp}}_{v} f(x,\cdot, t)\subset \{v\in\mathbb{R}~\big{|}~|v|\leq \widetilde{R}_{T}\},\quad (x,t)\in\mathbb{R}\times[0,T],
\end{split}
\right.
\end{equation}
where $\rho_{T}>0$ and $\widetilde{R}_{T}>0$ are constants dependent of time $T>0$.

 In addition to $(\ref{a2})$, if it further holds
\begin{align}
&\rho_{0}-\widetilde{\rho}\in H^2(\mathbb{R}),\quad u_{0}\in H^2(\mathbb{R}),\quad f_{0}\in H^{1}\cap C^{1}(\mathbb{R}\times\mathbb{R}),\label{a22}
\end{align}
then the global weak solution $(\rho, u, f)$ is indeed a strong solution satisfying for any $T>0$ that
\begin{equation}
\left\{
\begin{split}
&\rho-\widetilde{\rho}\in C([0,T];H^2(\mathbb{R})),~~\rho_{t}\in C([0,T];H^1(\mathbb{R})),\\
&u\in C([0,T];H^2(\mathbb{R})),\quad t^{\frac{1}{2}}u\in L^{\infty}(0,T;H^3(\mathbb{R})),\quad t^{\frac{1}{2}}u_{t}\in L^{\infty}(0,T;H^1(\mathbb{R})),\\
&f\in C([0,T];H^1\cap C^{1}(\mathbb{R}\times\mathbb{R} )),\quad f_{t}\in C([0,T];L^2\cap C^{0}(\mathbb{R}\times\mathbb{R} )).\label{r22}
\end{split}
\right .
\end{equation}
\end{theorem}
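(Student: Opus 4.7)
The strategy is to reduce the whole-line problem to the periodic case (Theorem \ref{theorem11}) by a domain-truncation and limiting argument, keeping in mind that all constants may now depend on the final time $T>0$. For each large $L>0$, I would extend the initial data to a $2L$-periodic triple $(\rho_0^L,u_0^L,f_0^L)$ by smoothly interpolating, over a thin collar, between the given $(\rho_0,u_0,f_0)$ on $[-L+1,L-1]$ and the constant state $(\widetilde{\rho},0,0)$ near the periodic boundary; after a standard mollification, the hypotheses of Theorem \ref{theorem11} (rescaled to the torus $\mathbb{T}_L:=\mathbb{R}/(2L\mathbb{Z})$) are satisfied, producing a global weak solution $(\rho^L,u^L,f^L)$ together with the associated pointwise bounds.

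The next step is to derive a family of estimates that are uniform in $L$ but may depend on $T$. I would establish, in order: the basic coupled energy inequality, controlling the fluid kinetic energy, the relative internal energy of $\rho^L-\widetilde{\rho}$, and the particle kinetic energy $\int|v|^2 f^L\,dv\,dx$, with dissipation provided by $\int\mu(\rho^L)(u^L_x)^2$ and the friction integral $\int\rho^L(u^L-v)^2 f^L\,dv\,dx$; conservation of the total particle mass together with the maximum-principle bound $\|f^L(t)\|_{\mathcal{L}^\infty}\leq e^{\rho_T T}\|f_0\|_{\mathcal{L}^\infty}$; propagation of a compact $v$-support via the characteristic ODE $\dot V=\rho^L(u^L-V)$ combined with an $L^\infty$ bound on $u^L$ obtained from the Sobolev embedding $H^1(\mathbb{R})\hookrightarrow L^\infty$; pointwise upper and lower bounds $\rho_T^{-1}\leq\rho^L\leq\rho_T$ derived through the effective viscous flux/Lagrangian mass-coordinate argument of the periodic setting, with the resulting constants now depending on $\|u^L\|_{L^\infty_T L^\infty_x}$ and hence on $T$; and finally $H^1$-type estimates on $\rho^L-\widetilde{\rho}$ and $u^L$ via differentiated energy identities.

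With these $L$-uniform bounds in hand, compactness delivers the limit solution: the Aubin--Lions lemma yields strong local convergence of $\rho^L$ and $u^L$, while weak-$\ast$ compactness in $\mathcal{L}^\infty$ combined with the compact-$v$-support property and the transport structure of the Vlasov equation provides enough convergence of $f^L$ to pass to the limit in the nonlinear drag terms. Uniqueness follows by testing the equations for the difference of two solutions against $(\rho_1-\rho_2,u_1-u_2)$ and a suitable $\mathrm{sgn}$-type test function for $f_1-f_2$, then invoking Gr\"onwall's inequality, using the $L^\infty$ control on $(\rho,u)$ and the compact $v$-support of $f$. The improved regularity (\ref{r22}) under (\ref{a22}) is then obtained exactly as in the periodic case, by differentiating the equations and combining parabolic smoothing of the momentum equation with propagation of $C^1$-smoothness of $f$ along characteristics.

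The main obstacle will be the pointwise density bound. On the torus, conservation of $\int_{\mathbb{T}}\rho$ combined with the dissipation of relative energy gives \emph{uniform-in-time} bounds for $\rho$; on $\mathbb{R}$ no such confining mechanism is available, so only Gr\"onwall-type bounds can be hoped for, and one must close the estimates without blow-up in finite time. Closely related is the need for an $L^\infty_x$ bound on $u$: since Poincar\'e's inequality is unavailable on $\mathbb{R}$, one has to rely on $\|u\|_{L^\infty}\lesssim\|u\|_{H^1(\mathbb{R})}$, so the energy step must actually produce $L^\infty_T L^2_x$ control of $u$ itself rather than only of $\sqrt{\rho}u$, and this is precisely where the lower bound on $\rho$ feeds back into the argument---requiring the two estimates to be closed simultaneously through a continuity-in-$T$ bootstrap.
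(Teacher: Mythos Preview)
Your overall plan---construct periodic approximations on $\mathbb{T}_L$, invoke Theorem~\ref{theorem11}, extract $L$-uniform bounds, pass to the limit---is \emph{different} from what the paper does, and it has a real gap at the density-bound step. The constants produced by Theorem~\ref{theorem11} are expressed through quantities like $\|\rho_0\|_{L^1(\mathbb{T})}$, $E_0$, and $\|f_0\|_{\mathcal{L}^1}$; on $\mathbb{T}_L$ the first of these is of order $2L\widetilde{\rho}$, so the black-box bounds $\rho_-,\rho_+,C_0,R_1$ from (\ref{r11}) blow up as $L\to\infty$. You say you will ``derive a family of estimates that are uniform in $L$'', but the periodic effective-velocity machinery is built on the operator $\mathcal{I}$ of (\ref{j}), which averages over the whole torus and produces exactly these $L$-dependent constants; ``the effective viscous flux/Lagrangian mass-coordinate argument of the periodic setting'' does not transfer as stated. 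To repair this you would have to localize the density argument to unit intervals---at which point the periodic truncation is doing no work, and you are essentially reproducing the paper's direct strategy.

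The paper instead establishes the a-priori estimates directly on $\mathbb{R}$ for a regular solution (Lemmas~\ref{lemma28}--\ref{lemma211}), regularizes only the \emph{initial data}, and passes to the limit via local compactness (Aubin--Lions on compact subsets, Cantor diagonalization). Two new ingredients replace the periodic effective velocities: for the density bounds (Lemma~\ref{lemma29}) the drag is absorbed via the global primitive $\int_{-\infty}^{x}n(y,t)\,dy$ (finite because $n\in L^1(\mathbb{R})$), and the resulting transport identity is analyzed through a localized operator $\mathcal{I}_{x_0}$ on each unit window $(x_0,x_0+1)$, with the particle-path displacement controlled by $\|u\|_{L^1(0,T;L^\infty)}$; for the $\rho_x$ estimate (Lemma~\ref{lemma210}) the paper uses the Bresch--Desjardins velocity $W=u+\rho^{-2}\mu(\rho)\rho_x$ (no $\mathcal{I}(n)$ term), which gives $L^p$ bounds uniform in $p$ and hence $\|\rho_x\|_{L^\infty}$. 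Your observation that the $L^\infty_x$ bound on $u$ and the lower bound on $\rho$ must be closed together is correct, and the paper handles it through (\ref{basicR})$_4$, which produces $\|u\|_{L^1(0,T;L^p)}$ control \emph{before} the pointwise density bounds by splitting $\{\rho\le\widetilde{\rho}\}$ and $\{\rho\ge\widetilde{\rho}\}$ in (\ref{411})--(\ref{413}).

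Finally, your proposed uniqueness argument (test the Vlasov difference against a $\mathrm{sgn}$-type function) is too optimistic at this regularity: $f$ is only in $L^\infty$ with no derivative control, so an $L^1$ stability estimate for the transport equation with the coefficient $\rho(u-v)$ known only in $W^{1,\infty}_x\times L^\infty_t$ is delicate. The paper (Propositions~\ref{prop31} and~\ref{prop41}) instead follows Loeper/Han-Kwan and works with the Lagrangian quantity
\[
Q(t)=\tfrac12\int f_0(x,v)\big(|X_1-X_2|^2+|V_1-V_2|^2\big)\,dv\,dx,
\]
coupled to $\|\rho_1-\rho_2\|_{L^2}$ and $\|\sqrt{\rho_1}(u_1-u_2)\|_{L^2}$, using the maximal-function inequality (Lemma~\ref{lemma31}) to handle the low-regularity term $(u_1-u_2)(X_1)-(u_1-u_2)(X_2)$.
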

~\par
\indent  We explain the main ideas to prove Theorem \ref{theorem11}, which can be applied to the proof of Theorem \ref{theorem12} with some modifications. Indeed, for any initial data satisfying $(\ref{a1})$ and (\ref{a11}), we can show the local well-posedness of strong solution to the IVP $(\ref{m1})$-$(\ref{kappa})$. Thus, to extend the solution globally in time, we need to overcome the difficulties caused by the drag force term $\rho n(w-u)$ on the right-hand side of the equation $(\ref{m1})_{2}$ and establish the uniformly a-priori estimates of the solution. To this end, we introduce a new effective velocity
\begin{align}
u+\mathcal{I}(n),\nonumber
\end{align}
where the operator $\mathcal{I}:L^1(\mathbb{T})\rightarrow L^{\infty}(\mathbb{T})$ is defined by
\begin{align}
\mathcal{I}(g)(x)=:\int^{x}_{0}g(y)dy-\int_{0}^{1}\int_{0}^{y}g(z) dzdy,\quad \forall g\in L^1(\mathbb{T}),\nonumber
\end{align}
 so that the momentum equation $(\ref{m1})_{2}$ can be re-written as
\begin{align}
&\big{(}\rho (u+\mathcal{I}(n))\big{)}_{t}+\big{(}\rho u(u+\mathcal{I}(n))\big{)}_{x}+(\rho^{\gamma})_{x}=(\mu(\rho)u_{x})_{x}+\rho\int_{0}^{1} nw(y,t)dy.\label{mm}
\end{align}
Making use of basic energy estimates and the equation (\ref{mm}) instead of $(\ref{m1})_{2}$, we are able to obtain the upper bound of the fluid density $\rho$ uniformly in time. In addition, we show that the $L^1$-norm of the pressure $P(\rho)$ is strictly positive for large time, which is crucial to establish the lower bound of $\rho$ uniformly with respect to time.

 Then, we define the second effective velocity
\begin{align}
U=:u+\mathcal{I}(n)+\rho^{-2}\mu(\rho)\rho_{x}.\nonumber
\end{align}
By $(\ref{m1})_{1}$ and (\ref{mm}), we can verify that $U$ satisfies the following equation:
\begin{align}
\rho(U_{t}+uU_{x})+\gamma\rho^{\gamma+1}\mu(\rho)^{-1}U=\gamma\rho^{\gamma+1}\mu(\rho)^{-1}(u+\mathcal{I}(n))+\rho\int_{0}^{1} nw(y,t)dy.\label{Ue}
\end{align}
After having proved that the density $\rho$ has strictly positive upper and lower bounds, which implies that the term $\gamma\rho^{\gamma+1}\mu(\rho)^{-1}U$ acts a damping in (\ref{Ue}), we are able to establish the $L^2$-estimate of $\rho_{x}$ uniformly in time. The higher order regularities of the solution can be showed by standard arguments after some modifications.

The rest of the paper is organized as follows. Section 2 is devoted to the a-priori estimates and global existence of solutions to the IVP $(\ref{m1})$-$(\ref{kappa})$ in both spatial periodic domain and spatial real line. The uniqueness of the weak solution satisfying either $(\ref{r1})$-$(\ref{r11})$ or $(\ref{r2})$-$(\ref{r21})$ is proved in Section 3. The local well-posedness of the IVP $(\ref{m1})$-$(\ref{kappa})$ is shown in Appendix A, and the exponential time-decay rate of the global weak solution is obtained in Appendix B.

\section{The a-priori estimates and global existence}

\subsection{Spatial periodic domain}

In this subsection, we establish the a-priori estimates for any regular solution to the IVP $(\ref{m1})$-$(\ref{kappa})$ in spatial periodic domain. Indeed, we can construct a approximate sequence of global regular solutions, show their convergence to a global weak solution of the IVP $(\ref{m1})$-$(\ref{kappa})$ based on the corresponding uniform estimates, and then justify the expected properties in Theorem \ref{theorem11} for the solution.

We start with

\begin{lemma}\label{lemma21}
Let $T>0$, and $(\rho,u,f)$ be any regular solution to the IVP $(\ref{m1})$-$(\ref{kappa})$ for $t\in (0,T]$. Then, under the assumptions of Theorem \ref{theorem11}, it holds
\begin{equation}
\left\{
\begin{split}
&\rho(x,t)\geq 0,\quad f(x,v,t)\geq0,\quad (x,v,t)\in\mathbb{T}\times\mathbb{R}\times [0,T],\\
&\int_{\mathbb{T}}\rho(x,t) dx=\int_{\mathbb{T}}\rho_{0}(x)dx,\quad t\in[0,T],\\
 &\int_{\mathbb{T}\times\mathbb{R}} f(x,v,t)dvdx=\int_{\mathbb{T}} n(x,t)dx=\int_{\mathbb{T}\times\mathbb{R}}f_{0}(x,v)dx,\quad t\in[0,T],\\
&\sup_{0\leq t\leq T}\bigg{(}\int_{\mathbb{T}}\big{(}\frac{1}{2}\rho|u|^2+\frac{\rho^{\gamma}}{\gamma-1}\big{)}(x,t)dx+\int_{\mathbb{T}\times\mathbb{R}} \frac{1}{2}|v|^2f(x,v,t)dvdx\bigg{)}\\
&\quad\quad+\int_{0}^{T}\int_{\mathbb{T}}(\mu(\rho)|u_{x}|^2)(x,t)dxdt+\int_{0}^{T}\int_{\mathbb{T}\times\mathbb{R}}(\rho|u-v|^2f)(x,v,t)dvdxdt\leq E_{0},\label{basiccnsv}
\end{split}
\right .
\end{equation}
where the initial energy $E_{0}$ is given by
\begin{equation}\label{E0}
\begin{split}
E_{0}=:\int_{\mathbb{T}} \big{(}\frac{1}{2}\rho_{0}|u_{0}|^2+\frac{\rho_{0}^{\gamma}}{\gamma-1}\big{)}(x)dx+\int_{\mathbb{T}\times\mathbb{R}} \frac{1}{2}|v|^2f_{0}(x,v)dvdx.
\end{split}
\end{equation}
\end{lemma}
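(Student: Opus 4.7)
The plan is to organize the proof into four essentially independent pieces: (i) non-negativity of $\rho$ and $f$, (ii) conservation of fluid mass, (iii) conservation of particle mass, and (iv) the combined fluid-kinetic energy identity. Throughout, I will work with a regular solution on $[0,T]$ so that all integrations by parts and characteristic computations are justified.

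For non-negativity, I would exploit the transport/characteristic structure. Writing $(\ref{m1})_{1}$ as $\rho_{t}+u\rho_{x}=-\rho u_{x}$ and integrating along the characteristics $\dot{X}(t)=u(X(t),t)$ gives $\rho(X(t),t)=\rho_{0}(X_{0})\exp\bigl(-\int_{0}^{t}u_{x}(X(s),s)\,ds\bigr)\geq 0$, since $\rho_{0}>0$ by $(\ref{a1})$. For $f$, I rewrite $(\ref{m1})_{3}$ as $f_{t}+vf_{x}+F_{d}f_{v}=-(F_{d})_{v}f=\rho f$ (using $(F_{d})_{v}=-\rho$), and along the kinetic characteristics $\dot{X}=v,\ \dot{V}=F_{d}(X,V,t)$ the ODE $\frac{d}{dt}f(X(t),V(t),t)=\rho(X(t),t)f$ preserves the sign $f\geq 0$ from $f_{0}\geq 0$.

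Conservation of $\int_{\mathbb{T}}\rho\,dx$ is immediate by integrating $(\ref{m1})_{1}$ in $x$ and using periodicity. For the particles, integrating $(\ref{m1})_{3}$ in $v$ yields the macroscopic continuity equation $n_{t}+(nw)_{x}=0$, where the $(F_{d}f)_{v}$ term vanishes because $f$ has compact $v$-support (propagated by the kinetic characteristics from the assumption on $f_{0}$); a further integration in $x$ gives the claimed conservation of $\int n\,dx=\int f\,dv\,dx$. The main step is the coupled energy identity. Multiplying $(\ref{m1})_{2}$ by $u$, using $(\ref{m1})_{1}$ via the standard identity $\bigl(\tfrac{1}{2}\rho u^{2}\bigr)_{t}+\bigl(\tfrac{1}{2}\rho u^{3}\bigr)_{x}=\rho u(u_{t}+uu_{x})$, and handling the pressure term through $(\rho e)_{t}+(\rho e u)_{x}+P(\rho)u_{x}=0$ with $e=\rho^{\gamma-1}/(\gamma-1)$, one obtains
\begin{align*}
\frac{d}{dt}\int_{\mathbb{T}}\Bigl(\tfrac{1}{2}\rho u^{2}+\tfrac{\rho^{\gamma}}{\gamma-1}\Bigr)dx+\int_{\mathbb{T}}\mu(\rho)u_{x}^{2}\,dx=-\int_{\mathbb{T}}u\int_{\mathbb{R}}F_{d}f\,dv\,dx.
\end{align*}
Multiplying $(\ref{m1})_{3}$ by $\tfrac{1}{2}|v|^{2}$ and integrating in $(x,v)$, after integration by parts in $v$ (justified by the compact $v$-support), yields
\begin{align*}
\frac{d}{dt}\int_{\mathbb{T}\times\mathbb{R}}\tfrac{1}{2}|v|^{2}f\,dv\,dx=\int_{\mathbb{T}\times\mathbb{R}}vF_{d}f\,dv\,dx.
\end{align*}
Adding the two balances and plugging in $F_{d}=\rho(u-v)$, the coupling terms telescope into the dissipation $-\int_{\mathbb{T}\times\mathbb{R}}\rho(u-v)^{2}f\,dv\,dx$; integrating in time produces $(\ref{basiccnsv})$ with initial energy $E_{0}$ given by $(\ref{E0})$.

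I do not anticipate a serious obstacle here, since these are the standard compressible Navier-Stokes and kinetic transport energy identities coupled through a drag term whose bilinear structure is precisely designed so that the cross terms combine into a manifestly non-negative dissipation. The only mild care is in justifying the integration by parts in $v$ and the decay of boundary terms as $|v|\to\infty$, which relies on the compactly supported initial data together with a standard characteristic argument showing that the drag, being bounded on $[0,T]$, keeps the $v$-support of $f(\cdot,\cdot,t)$ bounded uniformly on $[0,T]$.
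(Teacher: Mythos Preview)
Your proposal is correct and follows essentially the same route as the paper: non-negativity via the transport/characteristic structure, mass conservation by direct integration, and the energy inequality by multiplying $(\ref{m1})_{2}$ by $u$ and $(\ref{m1})_{3}$ by $\tfrac{1}{2}|v|^{2}$ and summing so that the drag terms combine into the dissipation $\int\rho|u-v|^{2}f$. The only cosmetic difference is that the paper handles the pressure contribution by multiplying $(\ref{m1})_{1}$ by $\tfrac{\gamma}{\gamma-1}\rho^{\gamma-1}$ rather than invoking the internal-energy form $(\rho e)_{t}+(\rho e u)_{x}+P(\rho)u_{x}=0$, but these are the same computation.
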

\begin{proof}
Due to the maximum principle for transport equations,  $(\ref{basiccnsv})_{1}$ holds. In addition, $(\ref{basiccnsv})_{2}$-$(\ref{basiccnsv})_{3}$ can be derived after a direct computation. To show $(\ref{basiccnsv})_{4}$, multiplying $(\ref{m1})_{1}$ and $(\ref{m1})_{2}$ by $\frac{\gamma}{\gamma-1}\rho^{\gamma-1}$ and $u$ respectively, and integrating the resulted equations by parts over $\mathbb{T}$, one gets after adding them together that
\begin{equation}
\begin{split}
&\frac{d}{dt}\int_{\mathbb{T}}\big{(}\frac{1}{2}\rho |u|^2+\frac{\rho^{\gamma}}{\gamma-1}\big{)}(x,t)dx+\int_{\mathbb{T}}(\mu(\rho)|u_{x}|^2)(x,t)dx\\
&\quad=-\int_{\mathbb{T}\times\mathbb{R}}\rho(x,t) (u(x,t)-v)u(x,t)f(x,v,t)dvdx.\label{energyfluid}
\end{split}
\end{equation}
Multiplying $(\ref{m1})_{3}$ by $\frac{1}{2}|v|^2$, and integrating the resulted equation by parts over $\mathbb{T}\times\mathbb{R}$, we obtain
\begin{equation}
\begin{split}
\frac{d}{dt}\int_{\mathbb{T}\times\mathbb{R}} \frac{1}{2}|v|^2f(x,v,t)dvdx=\int_{\mathbb{T}\times\mathbb{R}}\rho(x,t) (u(x,t)-v)vf(x,v,t)dvdx.\label{energyparticle}
\end{split}
\end{equation}
 The combination of (\ref{energyfluid})-(\ref{energyparticle}) gives rise to $(\ref{basiccnsv})_{4}$. The proof of Lemma \ref{lemma21} is completed.
\end{proof}

By Lemma \ref{lemma21}, we can establish the lower and upper bounds of the density $\rho$ below:
\begin{lemma}\label{lemma22}
Let $T>0$, and $(\rho,u,f)$ be any regular solution to the IVP $(\ref{m1})$-$(\ref{kappa})$ for $t\in(0,T]$. Then, under the assumptions of Theorem \ref{theorem11}, it holds
\begin{equation}
\begin{split}
&0<\frac{1}{C_{T}}\leq\rho(x,t)\leq \rho_{+},\quad (x,t)\in\mathbb{T}\times [0,T],\label{rhocnsv}
\end{split}
\end{equation}
where $\rho_{+}>0$ is a constant independent of time $T>0$, and $C_{T}>0$ is a constant dependent of time $T>0$.
\end{lemma}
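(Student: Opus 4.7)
The plan is to split $(\ref{rhocnsv})$ into a uniform-in-$T$ upper bound obtained from the effective-velocity reformulation $(\ref{mm})$, and a $T$-dependent lower bound obtained directly from the continuity equation along characteristics. The key technical input is that the particle quantities $\mathcal{I}(n)$ and $\int_0^1 nw\,dy$ appearing in $(\ref{mm})$ are controlled uniformly in time by the mass and kinetic-energy bounds of Lemma $\ref{lemma21}$.

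\textbf{Upper bound.} First I would observe that the corrector $\mathcal{I}(n)$ is harmless: mass conservation from Lemma $\ref{lemma21}$ yields $\|\mathcal{I}(n)(t)\|_{L^{\infty}(\mathbb{T})}\leq 2\|f_0\|_{\mathcal{L}^1}$ uniformly in $t$, and Cauchy--Schwarz in $v$ combined with the kinetic-energy bound $(\ref{basiccnsv})_4$ gives a uniform bound on $\int_0^1 nw\,dy$. Next I would test $(\ref{mm})$ against $u+\mathcal{I}(n)$; after integration by parts on $\mathbb{T}$ and combination with $(\ref{basiccnsv})_4$ to absorb cross terms, this produces
\[
\sup_{t\in[0,T]}\int_{\mathbb{T}}\rho|u+\mathcal{I}(n)|^2\,dx+\int_0^T\!\!\int_{\mathbb{T}}\mu(\rho)u_x^2\,dx\,dt\leq C,
\]
with $C$ independent of $T$. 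Finally, following a Kazhikhov--Vaigant-style pointwise analysis, I would introduce the effective viscous flux $G:=\mu(\rho)u_x-\rho^\gamma$, noting that $(\ref{mm})$ gives $G_x=\rho D_t(u+\mathcal{I}(n))-\rho\int_0^1 nw\,dy$ with $D_t:=\partial_t+u\partial_x$. Solving for $G$ via a primitive (on $\mathbb{T}$, with mean value controlled by $\int\rho^\gamma dx$ and $\int\mu(\rho)u_x dx$), plugging the resulting expression for $u_x=(G+\rho^\gamma)/\mu(\rho)$ into $D_t\log\rho=-u_x$ along a characteristic, and using the coercivity of $\rho^\gamma/\mu(\rho)$ to dominate $|G|/\mu(\rho)$ for $\rho$ large, should yield the uniform upper bound $\rho\leq\rho_+$.

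\textbf{Lower bound.} Once $\rho\leq\rho_+$ is known, I would exploit $D_t\log\rho=-u_x$ along characteristics $\dot X=u$: integrating gives
\[
\rho(X(t),t)=\rho_0(X(0))\exp\Big(\!-\!\int_0^t u_x(X(s),s)\,ds\Big),
\]
so it suffices to bound $\int_0^T\|u_x(s)\|_{L^{\infty}(\mathbb{T})}\,ds$. With the uniform upper bound of $\rho$ in hand, standard parabolic estimates applied to $(\ref{m1})_2$---treating the drag $\rho n(w-u)$ as a source controlled via Lemma $\ref{lemma21}$ and the upper bound of $\rho$---produce $u\in L^2(0,T;H^2(\mathbb{T}))$ with a $T$-dependent constant, and Cauchy--Schwarz in time combined with $H^1(\mathbb{T})\hookrightarrow L^{\infty}(\mathbb{T})$ yields $\int_0^T\|u_x\|_{L^{\infty}}\,dt\leq C_T$; hence $\rho\geq 1/C_T$.

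\textbf{Main obstacle.} The delicate step is the uniform-in-$T$ pointwise control of $G$ using only the quantities bounded by $(\ref{basiccnsv})$ and by the tested version of $(\ref{mm})$. The classical Navier--Stokes flux argument has to be adapted to the drag-induced source $\rho\int_0^1 nw\,dy$ and the corrector $\mathcal{I}(n)$; the payoff of passing from $(\ref{m1})_2$ to $(\ref{mm})$ is precisely that both of these particle contributions are bounded in $L^{\infty}_{x,t}$ by the conserved mass and kinetic energy of $f$, so that the pointwise analysis closes with constants independent of $T$, giving the time-independent $\rho_+$ while the lower bound unavoidably inherits $T$-dependence from the parabolic estimate on $u_x$.
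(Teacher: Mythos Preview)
Your upper-bound strategy is essentially the paper's: both pass to the effective-velocity form $(\ref{mm})$, derive a pointwise ODE along characteristics for $\theta(\rho):=\int_1^\rho \mu(s)s^{-1}\,ds$ (which is a multiple of $\log\rho$ when $\beta=0$) in which $\rho^\gamma$ appears as a damping term, and then run a Zlotnik-type argument to conclude $\rho\leq\rho_+$ uniformly in $T$. Your phrase ``pointwise control of $G$'' is slightly imprecise---what one actually controls is the \emph{time integral} of $G$ along a characteristic, since $G$ differs from the material derivative of the bounded quantity $\mathcal{I}\big(\rho(u+\mathcal{I}(n))\big)$ by bounded terms (this is exactly the paper's identity $(\ref{newm4})$)---but the mechanism is the same.

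The lower-bound argument, however, has a genuine circularity. To extract $u\in L^2(0,T;H^2(\mathbb{T}))$ with a \emph{quantitative} constant from $(\ref{m1})_2$, you need either a lower bound on $\rho$ (to divide the equation by $\rho$ and make it uniformly parabolic, as the paper does in $(\ref{unorho})$) or a bound on $\rho_x$ (to handle the pressure term $\gamma\rho^{\gamma-1}\rho_x$); and the paper's $\rho_x$ estimate via the second effective velocity $U$ itself uses the lower bound to pass from $\|\rho^{1/p}U\|_{L^p}$ to $\|U\|_{L^p}$ (see $(\ref{Ulp})$). So the constant in your $L^2(0,T;H^2)$ bound would depend on $\inf\rho$, which is precisely the quantity you are trying to estimate.

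The paper avoids this entirely: once $\rho\leq\rho_+$ is known, it simply integrates the \emph{same} ODE $(\ref{newm4})$ over $[0,t]$, bounding $\int_0^t\rho^\gamma\,ds\leq\rho_+^\gamma T$ and $\int_0^t\big|\int_0^1\mu(\rho)u_y\,dy\big|\,ds\leq C\sqrt{T}$ via the basic energy estimate $(\ref{basiccnsv})_4$, to obtain $\theta(\rho)(x,t)\geq -C(1+T)$ directly (equation $(\ref{the1234})$). No $H^2$ regularity of $u$ is needed at this stage; that estimate comes only afterwards, in Lemma~$\ref{lemma23}$, once both bounds on $\rho$ are already in hand.
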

\begin{proof}
  We introduce a new effective velocity
 $$
 u+\mathcal{I}(n),
  $$
  where the operator $\mathcal{I}:L^1(\mathbb{T})\rightarrow L^{\infty}(\mathbb{T})$ is defined by
  \begin{equation}\label{j}
\begin{split}
\mathcal{I}(g)(x)=:\int^{x}_{0}g(y)dy-\int_{0}^{1}\int_{0}^{y}g(z) dzdy,\quad \forall g\in L^1(\mathbb{T}).
\end{split}
\end{equation}
It can be verified verify for $g\in L^1(\mathbb{T})$ that
\begin{equation}
\begin{split}
 \sup_{x\in\mathbb{T}}\mathcal{I}(g)(x)\leq \|g\|_{L^{1}},\quad \big{(}\mathcal{I}(g)(x)\big{)}_{x}=g,\quad \mathcal{I}\big{(}g-\int_{0}^{1}g(y)dy\big{)}(x)\in W^{1,1}(\mathbb{T}),\label{J}
\end{split}
\end{equation}
and for $g\in W^{1,1}(\mathbb{T})$ that
\begin{equation}\label{J1}
\begin{split}
\mathcal{I}(g_{x})(x)=g(x)-\int_{0}^{1}g(y)dy.
\end{split}
\end{equation}
Integrating the equation $(\ref{m1})_{3}$ over $\mathbb{R}$ with respect to velocity variable $v$, we have
\begin{equation}
\begin{split}
n_{t}+(nw)_{x}=0,\label{nj}
\end{split}
\end{equation}
where $n$ and $w$ are defined by $(\ref{nw})$. Applying the operator $\mathcal{I}$ to the equation $(\ref{nj})$, we deduce by (\ref{J1}) that
\begin{equation}\begin{split}\nonumber
(\mathcal{I}(n))_{t}+nw-\int_{0}^{1}nw(y,t)dy=0,
\end{split}\end{equation}
which together with $(\ref{m1})_{1}$, (\ref{nw}) and (\ref{J}) leads to
\begin{equation}\begin{split}\label{drag}
&(\rho\mathcal{I}(n))_{t}+(\rho u\mathcal{I}(n))_{x}=\rho(\mathcal{I}(n))_{t}+\rho u(\mathcal{I}(n))_{x}\\
&\quad\quad\quad\quad\quad\quad\quad\quad\quad=-\rho nw+\rho u n+\rho\int_{0}^{1}nw(y,t)dy\\
&\quad\quad\quad\quad\quad\quad\quad\quad\quad=\int_{\mathbb{R}}\rho(u-v)fdv+\rho\int_{0}^{1}nw(y,t)dy.
\end{split}\end{equation}
 Thus, by virtue of (\ref{drag}), we can re-write the momentum equation $(\ref{m1})_{2}$ as
\begin{equation}\label{newm1}
\begin{split}
&\big{[}\rho (u+\mathcal{I}(n))\big{]}_{t}+\big{[}\rho u(u+\mathcal{I}(n))\big{]}_{x}+(\rho^{\gamma})_{x}=(\mu(\rho)u_{x})_{x}+\rho \int_{0}^{1}nw(y,t)dy.
\end{split}
\end{equation}
It follows from (\ref{j})-(\ref{J1}) and (\ref{newm1}) that
\begin{equation}\nonumber
\begin{split}
&\big{[}\mathcal{I}\big{(}\rho(u+\mathcal{I}(n))\big{)}\big{]}_{t}\\
&\quad=\int_{0}^{x}\big{[}\rho(u+\mathcal{I}(n))\big{]}_{t}(y,t)dy-\int_{0}^{1}\int_{0}^{y}\big{[}\rho u(u+\mathcal{I}(n))\big{]}_{t}(z,t)dzdy\\
&\quad=-\big{[}\rho u(u+\mathcal{I}(n))+\rho^{\gamma}-\mu(\rho)u_{y}\big{]}|^{y=x}_{y=0}+\int_{0}^{1}nw(y,t)dy\int_{0}^{x}\rho(y,t) dy\\
&\quad\quad+\int_{0}^{1}\big{[}\rho u(u+\mathcal{I}(n))+\rho^{\gamma}-\mu(\rho)u_{z}\big{]}|^{z=y}_{z=0}dy-\int_{0}^{1}nw(y,t)dy\int_{0}^{1}\int_{0}^{y}\rho(z,t) dzdy \\
&\quad=-\rho u(u+\mathcal{I}(n))-\rho^{\gamma}+\mu(\rho)u_{x}\\
&\quad\quad+\int_{0}^{1}\big{[}\rho u(u+\mathcal{I}(n))+\rho^{\gamma}-\mu(\rho)u_{y}\big{]}(y,t)dy+\mathcal{I}(\rho)\int_{0}^{1}nw(y,t)dy,
\end{split}
\end{equation}
which gives rise to
\begin{equation}
\begin{split}
&\big{[}\mathcal{I}\big{(}\rho(u+\mathcal{I}(n))\big{)}\big{]}_{t}+ u\big{[}\mathcal{I}\big{(}\rho(u+\mathcal{I}(n))\big{)}\big{]}_{x}+\rho^{\gamma}\\
&\quad=\mu(\rho)u_{x}+\mathcal{I}(\rho)\int_{0}^{1}nw(y,t)dy+ \int_{0}^{1}\big{[}\rho |u|^2+\rho u\mathcal{I}(n)+\rho^{\gamma}-\mu(\rho)u_{y}\big{]}(y,t)dy.\label{newm2}
\end{split}
\end{equation}
Furthermore, due to the equation $(\ref{m1})_{1}$, it holds
\begin{equation}\label{massrhox}
\begin{split}
\frac{d}{dt}\theta(\rho)+u\theta(\rho)_{x}=\mu(\rho)\frac{\rho_{t}+u\rho_{x}}{\rho}=-\mu(\rho)u_{x},
\end{split}
\end{equation}
where the function $\theta(\rho)$ is given by
\begin{eqnarray}
\label{214} \theta(\rho)=:\int_{1}^{\rho}\frac{\mu(s)}{s}ds=
\begin{cases}
2\log{\rho},
& \mbox{if $\beta=0,$ } \\
\log{\rho}+\frac{\rho^{\beta}-1}{\beta},
& \mbox{if $\beta>0.$}
\end{cases}
\end{eqnarray}
Substituting (\ref{massrhox}) into (\ref{newm2}), and re-writing the resulted equation along the particle path $\mathcal{X}^{x,t}(s)$ for any $(x,t)\in \mathbb{T}\times [0,T]$ defined as
\begin{equation}\label{overlinex}
\left\{ \begin{split}
&\frac{d}{ds}\mathcal{X}^{x,t}(s)=u(\mathcal{X}^{x,t}(s),x),\quad s\in [0,t],\\
&\quad \mathcal{X}^{x,t}(t)=x,
\end{split}
\right.
\end{equation}
 we have
 \begin{equation}\label{newm4}
 \begin{split}
&\frac{d}{ds}\Big{(}\theta(\rho)+\mathcal{I}\big{(}\rho u+\rho\mathcal{I}(n)\big{)}\Big{)}(\mathcal{X}^{x,t}(s),s)+\rho^{\gamma}(\mathcal{X}^{x,t}(s),s)\\
&=F(\mathcal{X}^{x,t}(s),s)-\int_{0}^{1}(\mu(\rho)u_{y})(y,s)dy,
\end{split}
\end{equation}
where $F(x,t)$ is given by
$$
F(x,t)=:\mathcal{I}(\rho)(x,t)\int_{0}^{1}nw(y,t)dy+ \int_{0}^{1}\big{[}\rho |u|^2+\rho u\mathcal{I}(n)+\rho^{\gamma}\big{]}(y,t)dy.
$$
It follows from (\ref{nw}), (\ref{basiccnsv}) and (\ref{J}) for any $(x,t)\in\mathbb{T}\times[0,T]$ that
\begin{equation}\label{F}
\left\{
 \begin{split}
 &|\mathcal{I}\big{(}\rho u+\rho\mathcal{I}(n)\big{)}(x,t)|\leq \|\rho(t)\|_{L^1}^{\frac{1}{2}}\|(\sqrt{\rho}u)(t)\|_{L^2}+\|\rho(t)\|_{L^1}\|n(t)\|_{L^1}\leq C_{2},\\
 &|F(x,t)|\leq  \|\rho(t)\|_{L^1}\|f(t)\|_{\mathcal{L}^{1}}^{\frac{1}{2}}\||v|^2f(t)\|_{\mathcal{L}^1}^{\frac{1}{2}}+\|(\sqrt{\rho}u)(t)\|_{L^2}^2\\
 &\quad\quad\quad~\quad+\|n(t)\|_{L^1}\|\rho(t)\|_{L^{1}}^{\frac{1}{2}}\|(\sqrt{\rho}u)(t)\|_{L^2}+\|\rho^{\gamma}(t)\|_{L^{1}}\leq C_{3},
 \end{split}
 \right.
 \end{equation}
 where $C_{2}>0$ and $C_{3}>0$ are two constants given by
 \begin{equation}\nonumber
\left\{
 \begin{split}
 &C_{2}=:\|\rho_{0}\|_{L^1}^{\frac{1}{2}}(2E_{0})^{\frac{1}{2}}+\|\rho_{0}\|_{L^1}\|f_{0}\|_{\mathcal{L}^1},\\
 &C_{3}=:\big{(}\|\rho_{0}\|_{L^1}\|f_{0}\|_{\mathcal{L}^1}^{\frac{1}{2}}+\|\rho_{0}\|_{L^1}^{\frac{1}{2}}\|f_{0}\|_{\mathcal{L}^1}\big{)}(2E_{0})^{\frac{1}{2}}+(\gamma+1) E_{0}.
 \end{split}
\right.
 \end{equation}
  Since it follows
 \begin{equation}\label{mutheta}
 \left\{
 \begin{split}
 &\mu(\rho)\leq 2,\quad\text{if}\quad0\leq \rho\leq 1,\\
 &\mu(\rho)\leq \beta\theta(\rho)+2,\quad\text{if}\quad\rho\geq 1,
 \end{split}
 \right.
 \end{equation}
we get
  \begin{align}
 &\Big{|}\int_{0}^{1}(\mu(\rho)u_{y})(y,t)dy\Big{|}\nonumber\\
 &\quad\leq \Big{(}\int_{0}^{1}\mu(\rho)(y,t)dy\Big{)}^{\frac{1}{2}}\Big{(}\int_{0}^{1}(\mu(\rho)|u_{y}|^2)(y,t)dy\Big{)}^{\frac{1}{2}}\nonumber\\
&\quad=1+\frac{1}{4} \Big{(}\int_{\{y\in(0,1)|\rho(y,t)\leq 1\}}+\int_{\{y\in(0,1)|\rho(y,t)\geq 1\}}\Big{)}\mu(\rho)(y,t)dy\int_{0}^{1}(\mu(\rho)|u_{y}|^2)(y,t)dy\nonumber\\
&\quad\leq 1+\int_{0}^{1}(\mu(\rho)|u_{y}|^2)(y,t)dy+\frac{\beta}{4}\sup_{x\in\{y\in(0,1)|\rho(y,t)\geq 1\}}\theta(\rho)(x,t)\int_{0}^{1}(\mu(\rho)|u_{y}|^2)(y,t)dy.\label{ux11}
  \end{align}

  Then, the upper bound of $\rho$ uniformly in time can be derived by the arguments due to Zlotnik \cite{zlotnik1}. For any $(x,t)\in\mathbb{T}\times[0,T]$, if there is a time $s\in[0,t]$ such that it holds
 $$
\rho(\mathcal{X}^{x,t}(s),s)> C_{4}=:\max\big{\{}(C_{3}+1)^{\frac{1}{\gamma}},~\sup_{x\in\mathbb{T}}\rho_{0}(x)\big{\}}>1,
$$
 then by the continuity of $\rho(\mathcal{X}^{x,t}(s),s)$ with respect to $s$ and the fact $\rho_{0}(\mathcal{X}^{x,t}(0))\leq C_{4}$, we can find a time $s_{*}\in[0,s)$ to have
 \begin{equation}\label{cutcut}
 \left\{
 \begin{split}
 &\rho(\mathcal{X}^{x,t}(s_{*}),s_{*})=C_{4}, \\
 &\rho(\mathcal{X}^{x,t}(\tau),\tau)>C_{4}\geq (C_{3}+1)^{\frac{1}{\gamma}},\quad \forall \tau\in(s_{*},s].
 \end{split}
 \right.
 \end{equation}
Integrating (\ref{newm4}) over $[s_{*},s]$, we obtain by (\ref{F}) and (\ref{ux11})-(\ref{cutcut}) that
\begin{equation}\nonumber
\begin{split}
&\theta(\rho)(\mathcal{X}^{x,t}(s),s)=\theta(\rho)(\mathcal{X}^{x,t}(s_{*}),s_{*})-\mathcal{I}\big{(}\rho u+\rho\mathcal{I}(n)\big{)}(\mathcal{X}^{x,t}(\tau),\tau)|^{\tau=s}_{\tau=s_{*}}\\
&\quad\quad\quad\quad\quad\quad\quad~~-\int_{s_{*}}^{s}\rho^{\gamma}(\mathcal{X}^{x,t}(\tau),\tau)d\tau+\int_{s_{*}}^{s}F(\mathcal{X}^{x,t}(\tau),\tau)d\tau-\int_{s_{*}}^{s}\int_{0}^{1}(\mu(\rho)u_{y})(y,\tau)dyd\tau\\
&\quad\quad\quad\quad\quad\quad~~\leq \theta(C_{4})+2C_{2}+\int_{s_{*}}^{s}\int_{0}^{1}(\mu(\rho)|u_{y}|^2)(y,\tau)dyd\tau\\
&\quad\quad\quad\quad\quad\quad\quad~~+\frac{\beta}{4}\int_{s_{*}}^{s}\sup_{x\in\{y\in(0,1)|\rho(y,t)\geq 1\}}\theta(\rho)(x,\tau)\int_{0}^{1}(\mu(\rho)|u_{y}|^2)(y,\tau)dyd\tau,
\end{split}
\end{equation}
which together with $(\ref{basiccnsv})_{4}$ and the case $1\leq \rho(\mathcal{X}^{x,t}(s),s)\leq C_{4}$ leads to
\begin{equation}\label{pp}
\begin{split}
&\sup_{x\in\{y\in(0,1)|\rho(y,t)\geq 1\}}\theta(\rho)(x,t)\\
&\quad\quad\leq  \theta(C_{4})+2C_{2}+E_{0}\\
&\quad\quad\quad+\frac{\beta}{4}\int_{0}^{T}\sup_{x\in\{y\in(0,1)|\rho(y,t)\geq 1\}}\theta(\rho)(x,\tau)\int_{0}^{1}(\mu(\rho)|u_{y}|^2)(y,\tau)dyd\tau,\quad t\in[0,T].
\end{split}
\end{equation}
By virtue of $(\ref{basiccnsv})_{4}$, (\ref{pp}) and the Gr${\rm{\ddot{o}}}$nwall's inequality, we have
\begin{equation}\label{upperdd}
\begin{split}
\sup_{x\in\{y\in(0,1)|\rho(y,t)\geq 1\}}\theta(\rho)(x,t)\leq e^{\frac{\beta E_{0}}{4}}\big{(}\theta(C_{4})+2C_{2}+E_{0}\big{)}.
\end{split}
\end{equation}
Thus, we show the upper bound of $\rho$ uniformly in time.

Finally, integrating (\ref{newm4}) over $[0,t]$, one deduces by $(\ref{basiccnsv})_{4}$, the upper bound in (\ref{rhocnsv}), (\ref{overlinex}) and (\ref{F}) for any $(x,t)\in\mathbb{T}\times[0,T]$ that
 \begin{equation}\label{the1234}
 \begin{split}
&\theta(\rho)(x,t)\geq \inf_{x\in\mathbb{T}}\theta(\rho_{0})(x)-\big{|}\mathcal{I}\big{(}\rho u+\rho\mathcal{I}(n)\big{)}(\mathcal{X}^{x,t}(\tau),\tau)|^{\tau=t}_{\tau=0}\big{|}-T\rho_{+}^{\gamma}\\
&\quad\quad\quad\quad\quad-T\sup_{(x,t)\in\mathbb{T}\times[0,T]}|F(x,t)|-\int_{0}^{T}\Big{|}\int_{0}^{1}(\mu(\rho)u_{y})(y,\tau)dy\Big{|}d\tau\\
&\quad\quad\quad\quad\geq \inf_{x\in\mathbb{T}}\theta(\rho_{0})(x)-2C_{2}-(\rho_{+}^{\gamma}+C_{3})T-(\mu(\rho_{+})T)^{\frac{1}{2}}\|\sqrt{\mu(\rho)}u_{x}\|_{L^2(0,T;L^2)}\\
&\quad\quad\quad\quad\geq\inf_{x\in\mathbb{T}}\theta(\rho_{0})(x) -2C_{2} -\frac{\mu(\rho_{+})E_{0}}{4}-(\rho_{+}^{\gamma}+C_{3}+1)T,
 \end{split}\end{equation}
 which together with the fact
 \begin{equation}\label{logrho}
 \left\{
 \begin{split}
 &\theta(\rho)=2\log{\rho},\quad \text{if}~\beta=0,\\
 &\theta(\rho)\leq \log{\rho}+\frac{\rho_{+}^{\beta}}{\beta},\quad\text{if}~\beta>0,
 \end{split}
 \right.
 \end{equation}
yields the time-dependent lower bound of $\rho$. The proof of Lemma \ref{lemma22} is completed.
\end{proof}

\begin{lemma}\label{lemma23}
Let $T>0$, and $(\rho,u,f)$ be any regular solution to the IVP $(\ref{m1})$-$(\ref{kappa})$ for $t\in(0,T]$. Then, under the assumptions of Theorem \ref{theorem11}, it holds
\begin{equation}
\left\{
\begin{split}
&\sup_{t\in[0,T]}\|f(t)\|_{\mathcal{L}^{\infty}}\leq e^{\rho_{+}T}\|f_{0}\|_{\mathcal{L}^{\infty}},\\
&~{\rm{Supp}}_{v}f(x,\cdot,t)\subset \{v\in\mathbb{R}~\big{|}~|v|\leq R_{T}\},\quad (x,t)\in \mathbb{T}\times[0,T],\\
&\sup_{t\in[0,T]}\big{(}\|n(t)\|_{L^{\infty}}+\|\rho_{x}(t)\|_{L^{\infty}}+\|u_{x}(t)\|_{L^2}\big{)}+\|(u_{t},u_{xx})\|_{L^2(0,T;L^2)}\leq C_{T},\label{rhouh1cnsv}
\end{split}
\right .
\end{equation}
where the constant $\rho_{+}>0$ is given by $(\ref{rhocnsv})$, and $R_{T}>0$ and $C_{T}>0$ are two constants.
\end{lemma}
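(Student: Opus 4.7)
The plan is to establish the estimates in Lemma 2.3 in an order that breaks the obvious circular dependencies. I would first derive the $\mathcal{L}^\infty$ bound on $f$ purely by the method of characteristics. Rewriting the Vlasov equation as $f_t + v f_x + F_d f_v = -(F_d)_v f = \rho f$ (since $F_d = \rho(u-v)$ gives $(F_d)_v = -\rho$) and introducing the characteristics $\dot{X} = V$, $\dot{V} = \rho(X,t)(u(X,t) - V)$ with initial data $(x_0, v_0)$, one obtains $\frac{d}{dt} f(X(t), V(t), t) = \rho(X(t), t) f(X(t), V(t), t)$. Integrating and invoking the uniform upper bound $\rho \leq \rho_+$ from Lemma \ref{lemma22} yields $\|f(t)\|_{\mathcal{L}^\infty}\leq e^{\rho_+ T}\|f_0\|_{\mathcal{L}^\infty}$, which is the first assertion of $(\ref{rhouh1cnsv})$.

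Next, the velocity-support bound, the $L^2$ estimate on $u_x$, and the $\|n\|_{L^\infty}$ bound must be closed simultaneously. From the ODE $\dot{V} = \rho(u - V)$, Gr\"onwall yields $|V(t)| \leq (|V(0)| + \rho_+\int_0^t\|u(s)\|_{L^\infty}ds)e^{\rho_+ t}$, so I need an $L^1_t L^\infty_x$ control on $u$. By Sobolev embedding on $\mathbb{T}$ together with the lower bound $\rho \geq C_T^{-1}$ (Lemma \ref{lemma22}) and $\int \rho u^2 dx \leq 2 E_0$ (Lemma \ref{lemma21}), I have $\|u\|_{L^\infty}\leq C(\|u\|_{L^2}+\|u_x\|_{L^2}) \leq C_T(1+\|u_x\|_{L^2})$. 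I would therefore multiply $(\ref{m1})_2$ by $u_t$, integrate over $\mathbb{T}$, and after integration by parts reach
\begin{align*}
\int\rho u_t^2\,dx + \frac{1}{2}\frac{d}{dt}\int \mu(\rho)|u_x|^2\,dx &= -\int \rho u u_x u_t\,dx - \int (\rho^\gamma)_x u_t\,dx \\
&\quad - \int\Bigl(\int F_d f\,dv\Bigr) u_t\,dx + \frac{1}{2}\int \mu'(\rho)\rho_t\, |u_x|^2\,dx.
\end{align*}
Using Lemmas \ref{lemma21}--\ref{lemma22}, Young's inequality, the $\mathcal{L}^\infty$ bound on $f$ from the first step, and $\|n(t)\|_{L^1}=\|f_0\|_{\mathcal{L}^1}$, each term on the right can be absorbed into $\int\rho u_t^2 dx$ or turned into a factor multiplying $\|u_x\|_{L^2}^2$. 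Gr\"onwall then gives a uniform $C_T$ bound on $\|u_x\|_{L^\infty(0,T;L^2)}$ and $\|u_t\|_{L^2(0,T;L^2)}$. This closes the chain: $\|u\|_{L^\infty(0,T;L^\infty)} \leq C_T$, $\text{Supp}_v f(\cdot,\cdot,t) \subset \{|v|\leq R_T\}$, and $\|n(t)\|_{L^\infty} \leq 2R_T \|f\|_{\mathcal{L}^\infty}\leq C_T$.

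For the remaining bounds on $\rho_x$ and $u_{xx}$, I would use the second effective velocity $U = u + \mathcal{I}(n) + \mu(\rho)\rho_x/\rho^2$ advertised in the introduction. Computing $(\mathcal{I}(n))_t$ from the zeroth moment $n_t + (nw)_x = 0$, the material derivative of $\mu(\rho)\rho_x/\rho^2$ from the mass equation, and substituting the momentum equation yields the damped transport equation $(\ref{Ue})$:
\begin{align*}
\rho(U_t + u U_x) + \frac{\gamma\rho^{\gamma+1}}{\mu(\rho)}U = \frac{\gamma\rho^{\gamma+1}}{\mu(\rho)}(u + \mathcal{I}(n)) + \rho\int_0^1 nw(y,t)\,dy.
\end{align*}
Since $\rho$ is bounded above and below by Lemma \ref{lemma22}, the coefficient $\gamma\rho^{\gamma+1}/\mu(\rho)$ is bounded below by a positive constant, giving genuine damping. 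The source on the right is bounded in $L^\infty$ using $\|u\|_{L^\infty}$ from the previous step, $|\mathcal{I}(n)| \leq \|n\|_{L^1} = \|f_0\|_{\mathcal{L}^1}$ from $(\ref{J})$, and $|\int_0^1 nw\,dy| \leq R_T\|f_0\|_{\mathcal{L}^1}$. Integrating the damped ODE along the particle path $\dot X = u$ yields $\|U\|_{L^\infty} \leq C_T$, which translates into $\|\rho_x\|_{L^\infty} \leq C_T$ via $\rho_x = (\rho^2/\mu(\rho))(U - u - \mathcal{I}(n))$. Rearranging the momentum equation as $\mu(\rho) u_{xx} = \rho u_t + \rho u u_x + (\rho^\gamma)_x - \mu'(\rho)\rho_x u_x + \int F_d f\,dv$ and taking $L^2(0,T;L^2)$ norms then gives the required $\|u_{xx}\|_{L^2(L^2)}$ bound.

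The main obstacle is the tangled dependency: the support bound needs $u \in L^1_t L^\infty_x$, which via Sobolev needs $\|u_x\|_{L^\infty_t L^2_x}$, whose energy inequality contains convection and drag terms that at first glance seem to demand the support bound and even $\|\rho_x\|_{L^\infty}$; worse, $\|\rho_x\|_{L^\infty}$ would seem to need $\|u_{xx}\|_{L^\infty}$ through the mass equation. The effective velocity $U$ is the decisive device, because it packages $\rho_x$ together with $u$ and $\mathcal{I}(n)$ so that the governing equation for $U$ is damped transport with a source requiring only lower-regularity quantities already controlled, thereby unlocking $\|\rho_x\|_{L^\infty}$ without ever invoking $\|u_{xx}\|_{L^\infty}$. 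The remaining circularity among $u$, $u_x$, and the $v$-support is then handled by a careful Gr\"onwall/continuation argument in $t$.
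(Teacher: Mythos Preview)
Your step 2 has a genuine gap. When you multiply the momentum equation by $u_t$ and integrate, two of the terms you list cannot be handled with the ingredients you name. The pressure term $-\int_{\mathbb{T}}(\rho^\gamma)_x u_t\,dx$ is bounded in the obvious way by $\gamma\|\rho^{\gamma-1}\|_{L^\infty}\|\rho_x\|_{L^2}\|u_t\|_{L^2}$, which requires $\|\rho_x\|_{L^2}$; if instead you integrate by parts to $\int\rho^\gamma u_{xt}\,dx=\tfrac{d}{dt}\int\rho^\gamma u_x\,dx-\int(\rho^\gamma)_t u_x\,dx$ and use $(\rho^\gamma)_t+(\rho^\gamma u)_x=(1-\gamma)\rho^\gamma u_x$, you are led to $\int\rho^\gamma u\,u_{xx}\,dx$ or, after one more integration by parts, back to a $\rho_x$ term. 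Likewise the viscosity correction $\tfrac12\int\mu'(\rho)\rho_t|u_x|^2\,dx$ contains $\rho_t=-\rho u_x-u\rho_x$. In both cases you need $\rho_x$ control, but in your plan that only appears in step 3 via $U$. So the chain does not close in the order you propose.

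The underlying reason the paper avoids this is that the ``tangled dependency'' you diagnose is not actually there. The velocity--support bound needs only $\int_0^T\|u(s)\|_{L^\infty}\,ds$, and by the inequality $\|u\|_{L^\infty}\le\|u_x\|_{L^2}+\|\rho_0\|_{L^1}^{-1/2}\|\sqrt{\rho}\,u\|_{L^2}$ together with Cauchy--Schwarz in time this is controlled by $T^{1/2}\|u_x\|_{L^2(0,T;L^2)}+C T$, which is already bounded by the basic energy estimate of Lemma~\ref{lemma21}. So the support bound (and hence $\|n\|_{L^\infty}$) comes \emph{first}, with no appeal to $\|u_x\|_{L^\infty_tL^2_x}$. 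The paper then runs the $U$ equation to get $\|\rho_x\|_{L^p}$ for all $p$---again using only $\|u\|_{L^1_tL^\infty_x}$ from basic energy---and only \emph{after} $\|\rho_x\|_{L^2}$ is in hand does it test the momentum equation (by $-u_{xx}$ rather than $u_t$) to obtain $\|u_x\|_{L^\infty_tL^2_x}$ and $\|u_{xx}\|_{L^2_tL^2_x}$. Reordering your argument this way removes the gap; your step 3 (the $U$ equation along characteristics) is then correct and can be moved up to precede the $u_x$ estimate.
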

\begin{proof}
For any $(x,v,t)\in \mathbb{T}\times\mathbb{R}\times[0,T]$, define the bi-characteristic curves $(X^{x,v,t}(s),V^{x,v,t}(s))$ by
\begin{equation}\label{XV}
\left\{
\begin{split}
&\frac{d}{ds}X^{x,v,t}(s)=V^{x,v,t}(s),\quad s\in [0,t],\\
&\quad X^{x,v,t}(t)=x,\\
&\frac{d}{ds}V^{x,v,t}(s)=\rho(X^{x,v,t}(s),s) \big{(}u(X^{x,v,t}(s),s)-V^{x,v,t}(s)\big{)},\quad s\in [0,t],\\
&\quad V^{x,v,t}(t)=v.
\end{split}
\right .
\end{equation}
It holds by $(\ref{m1})_{3}$ that
\begin{equation}\label{m33}
\begin{split}
f_{t}+vf_{x}+\rho(u-v)f_{v}-\rho f=0,
\end{split}
\end{equation}
which can be re-written along the bi-characteristic curves $(X^{x,v,t}(s),V^{x,v,t}(s))$ as
$$
\frac{d}{ds}f(X^{x,v,t}(s),V^{x,v,t}(s),s)-\rho(X^{x,v,t}(s),s)f(X^{x,v,t}(s),V^{x,v,t}(s),s)=0.
$$
Thus, we have
\begin{equation}
\begin{split}
f(x,v,t)=e^{\int_{0}^{t}\rho(X^{x,v,t}(s),s)ds}f_{0}(X^{x,v,t}(0),V^{x,v,t}(0)).\label{fformula}
\end{split}
\end{equation}
By (\ref{rhocnsv}) and (\ref{fformula}), we have $(\ref{rhouh1cnsv})_{1}$. Then we solve the equation $(\ref{XV})_{3}$ over $[0,s]$ to get
\begin{equation}\nonumber
\begin{split}
&V^{x,v,t}(s)=e^{-\int_{0}^{s}\rho(X^{x,v,t}(\tau),\tau)d\tau}V^{x,v,t}(0)+\int_{0}^{s}e^{-\int_{\tau}^{s}\rho(X^{x,v,t}(\omega),\omega)d\omega}\rho u(X^{x,v,t}(\tau),\tau)d\tau,
\end{split}
\end{equation}
which shows for $s=t$ that
\begin{equation}\label{xv1}
\begin{split}
&v=e^{-\int_{0}^{t}\rho(X^{x,v,t}(\tau),\tau)d\tau}V^{x,v,t}(0)+\int_{0}^{t}e^{-\int_{\tau}^{t}\rho(X^{x,v,t}(\omega),\omega)d\omega}\rho u(X^{x,v,t}(\tau),\tau)d\tau.
\end{split}
\end{equation}
For any $(x,t)\in\mathbb{T}\times[0,T]$, defining
\begin{equation}\label{Sigmadef}
\begin{split}
 \Sigma(x,t)=:\{v\in\mathbb{R}~\big{|}~f(x,v,t)\neq 0\},
\end{split}
\end{equation}
 we have
\begin{equation}\label{Sigma}
\begin{split}
{\rm{Supp}}_{v} f(x,\cdot,t)\subset \{v\in\mathbb{R}~|~|v|\leq \sup_{v\in\Sigma(x,t)}|v|\}.
\end{split}
\end{equation}
 Due to (\ref{fformula}), it holds
\begin{equation}\label{Sigma1}
\begin{split}
V^{x,v,t}(0)\notin {\rm{Supp}}_{v}f_{0}(X^{x,v,t}(0),\cdot)\subset \{v\in\mathbb{R}~\big{|}~|v|\leq R_{0}\}\Rightarrow f(x,v,t)=0.
\end{split}
\end{equation}
 Therefore, one deduces by $(\ref{a1})_{2}$, (\ref{basiccnsv}), (\ref{rhocnsv}) and (\ref{Sigma1}) for any $(x,t)\in\mathbb{T}\times[0,T]$ that
\begin{equation}\label{compactv1}
\begin{split}
&\sup_{v\in\Sigma(x,t)}|v|\leq R_{0}+\rho_{+}\int_{0}^{t}|u(x,s)|ds\\
&\quad\quad\quad\quad~\leq R_{T}=:R_{0}+\rho_{+}E_{0}^{\frac{1}{2}}\Big{(}T^{\frac{1}{2}}+\frac{2^{\frac{1}{2}}T}{\|\rho_{0}\|_{L^1}^{\frac{1}{2}}}\Big{)},
\end{split}
\end{equation}
where we have used the fact
\begin{equation}\label{uinfty1}
\begin{split}
&u(x,t)=\frac{\int_{\mathbb{T}}\rho(y,t)\int^{x}_{y}u_{z}(z,t)dzdy+\int_{\mathbb{T}}\rho u(y,t)dy}{\int_{\mathbb{T}}\rho(y,t)dy}\\
&\quad\quad~~\leq\|u_{x}(t)\|_{L^2}+\frac{\|(\sqrt{\rho} u)(t)\|_{L^2}}{\|\rho_{0}\|_{L^1}^{\frac{1}{2}}}.
\end{split}
\end{equation}
 The combination of (\ref{Sigma})-(\ref{compactv1}) gives rise to $(\ref{rhouh1cnsv})_{2}$.

Next, we are going to show $(\ref{rhouh1cnsv})_{3}$. Denote the second effective velocity as
\begin{equation}\label{m}
\begin{split}
U=:u+\mathcal{I}(n)+\rho^{-2}\mu(\rho)\rho_{x}.
\end{split}
\end{equation}
It is easy to verify
\begin{equation}\label{bdx}
\begin{split}
&-(\mu(\rho)u_{x})_{x}=(\rho^{-1}\mu(\rho)\rho_{t})_{x}+(\rho^{-1}\mu(\rho)u \rho_{x})_{x}\\
&\quad\quad\quad\quad\quad~=\rho(\rho^{-2}\mu(\rho)\rho_{x})_{t}+\rho u(\rho^{-2}\mu(\rho)\rho_{x})_{x},
\end{split}
\end{equation}
derived from $(\ref{m1})_{1}$, and
\begin{equation}\nonumber
\begin{split}
(\rho^{\gamma})_{x}=\gamma\rho^{\gamma+1}\mu(\rho)^{-1}U-\gamma\rho^{\gamma+1}\mu(\rho)^{-1}(u+\mathcal{I}(n)),
\end{split}
\end{equation}
so that the equation (\ref{newm1}) can be re-written as
 \begin{equation}
\begin{split}
\rho(U_{t}+uU_{x})+\gamma\rho^{\gamma+1}\mu(\rho)^{-1}U=\gamma\rho^{\gamma+1}\mu(\rho)^{-1}(u+\mathcal{I}(n))+\rho\int_{0}^{1}nw(y,t)dy.\label{bd}
\end{split}
\end{equation}
Multiplying (\ref{bd}) by $p|U|^{p-2}U$ for any $p\in[2,\infty)$, and integrating the resulted equation by parts over $\mathbb{T}$, we obtain by  (\ref{basiccnsv}), (\ref{rhocnsv}) and (\ref{J}) that
\begin{align}
&\frac{d}{dt}\|(\rho^{\frac{1}{p}}U)(t)\|_{L^{p}}^{p}+ p\gamma\int_{\mathbb{T}} (\rho^{\gamma+1}\mu(\rho)^{-1}|U|^{p})(x,t)dx\nonumber\\
&= p\gamma\int_{\mathbb{T}}\big{[} \big{(}\gamma\rho^{\gamma+1}\mu(\rho)^{-1}(u+\mathcal{I}(n))+\rho\int_{\mathbb{T}}nw(y,t)dy\big{)}|U|^{p-2}U\big{]}(x,t) dx\nonumber\\
&\leq p\big{(}\gamma\rho_{+}^{\gamma}\|u(t)\|_{L^{\infty}}+\gamma\rho_{+}^{\gamma}\|n(t)\|_{L^1}+\|f(t)\|_{\mathcal{L}^1}^{\frac{1}{2}}\||v|^2f\|_{\mathcal{L}^1}\big{)}\|\rho(t)\|_{L^{1}}^{\frac{1}{p}}\|(\rho^{\frac{1}{p}}U)(t)\|_{L^{p}}^{p-1}\nonumber\\
&\leq p\big{(}\gamma\rho_{+}^{\gamma}\|u(t)\|_{L^{\infty}}+\gamma\rho_{+}^{\gamma}\|f_{0}\|_{\mathcal{L}^1}+(2\|f_{0}\|_{\mathcal{L}^1}E_{0})^{\frac{1}{2}}\big{)}\big{(}\|\rho_{0}\|_{L^1}+1\big{)}^{\frac{1}{2}}\|(\rho^{\frac{1}{p}}U)(t)\|_{L^{p}}^{p-1}\nonumber\\
&\leq pC\big{(}1+\|u(t)\|_{L^{\infty}}\big{)}\|(\rho^{\frac{1}{p}}U)(t)\|_{L^{p}}^{p-1}.\label{the221}
\end{align}
We deduce after dividing (\ref{the221}) by $\big{(}\|(\rho^{\frac{1}{p}}U)(t)\|_{L^{p}}^{p}+\varepsilon^{p}\big{)}^{\frac{p-1}{p}}$ that
\begin{equation}\label{ddtU1}
\begin{split}
&\frac{d}{dt}\big{(}\|(\rho^{\frac{1}{p}}U)(t)\|_{L^{p}}^{p}+\varepsilon^{p}\big{)}^{\frac{1}{p}}\leq C\big{(}1+\|u(t)\|_{L^{\infty}}\big{)}.
\end{split}
\end{equation}
Integrating (\ref{ddtU1}) over $[0,t]$, making use of (\ref{basiccnsv}), (\ref{rhocnsv}), (\ref{J}) and (\ref{uinfty1}), and then taking the limit $\varepsilon\rightarrow 0$, we get
\begin{equation}
\begin{split}
&\sup_{t\in[0,T]}\|(\rho^{\frac{1}{p}}U)(t)\|_{L^{p}}\\
&\quad\leq \|\rho_{0}^{\frac{1}{p}}\big{(}u_{0}+\mathcal{I}(\int_{\mathbb{R}}f_{0}(\cdot,v)dv)+\rho_{0}^{-2}\mu(\rho_{0})(\rho_{0})_{x}\big{)}\|_{L^{p}}+C\big{(}1+\|u\|_{L^1(0,T;L^{\infty})}\big{)}\\
&\quad\leq \big{(}1+\|\rho_{0}\|_{L^1}\big{)}^{\frac{1}{2}}\big{(}\|u_{0}\|_{L^{\infty}}+\|f_{0}\|_{\mathcal{L}^1}+\|\rho_{0}^{-2}\mu(\rho_{0})(\rho_{0})_{x}\|_{L^{\infty}}\big{)}\\
&\quad\quad+C\big{(}1+T^{\frac{1}{2}}\|u_{x}\|_{L^2(0,T;L^2)}+T\|\sqrt{\rho}u\|_{L^{\infty}(0,T;L^2)}\big{)}\\
&\quad\leq C_{5}(1+T),\label{Ulpf}
\end{split}
\end{equation}
where $C_{5}>0$ is a sufficiently large constant independent of time $T>0$ and $p\in [2,\infty)$.
By (\ref{rhocnsv}) and (\ref{Ulpf}), we have
\begin{equation}\label{Ulp}
\begin{split}
&\sup_{t\in[0,T]}\|U(t)\|_{L^{p}}\leq C_{T}^{\frac{1}{p}}\sup_{t\in[0,T]}\|(\rho^{\frac{1}{p}}U)(t)\|_{L^{p}}\leq (C_{T}+1)^{\frac{1}{2}}C_{5}(1+T).
\end{split}
\end{equation}
Since the constant on the right-hand side of (\ref{Ulp}) is independent of $p\in[2,\infty)$, one can take the limit $p\rightarrow \infty$ in (\ref{Ulp}) to obtain
\begin{equation}
\begin{split}
\sup_{t\in[0,T]}\|U(t)\|_{L^{\infty}}\leq (C_{T}+1)^{\frac{1}{2}}C_{5}(1+T).\label{rhow1p}
\end{split}
\end{equation}
Choosing $p=2$ in (\ref{Ulp}), we deduce by (\ref{basiccnsv}), (\ref{rhocnsv}) and (\ref{m}) that
\begin{equation}\label{rhoh1time}
\begin{split}
&\sup_{t\in[0,T]}\|\rho_{x}(t)\|_{L^2} \leq \rho_{+}^{2}\sup_{t\in[0,T]}\|\big{(}U-u-\mathcal{I}(n)\big{)}(t)\|_{L^2}\\
&\quad\quad\quad\quad\quad\quad\quad\leq \rho_{+}^{2} \sup_{t\in[0,T]}\big{(}\|U(t)\|_{L^2}+\|u(t)\|_{L^2}+\|n(t)\|_{L^1}\big{)}\leq C_{T}.
\end{split}
\end{equation}

Then, we divide the equation $(\ref{m1})_{2}$ by $\rho$ to get
\begin{equation}
\begin{split}
 u_{t}-\mu(\rho)\rho^{-1} u_{xx}=G=:nw-nu-\gamma \rho^{\gamma-2}\rho_{x}-uu_{x}+\beta\rho^{\beta-2}\rho_{x}u_{x}.\label{unorho}
\end{split}
\end{equation}
By $(\ref{nw})$ and $(\ref{rhouh1cnsv})_{1}$-$(\ref{rhouh1cnsv})_{2}$, one has
\begin{equation}\label{nl2}
\begin{split}
&\sup_{t\in[0,T]}\|n(t)\|_{L^{\infty}}\leq 2R_{T}\sup_{t\in[0,T]}\|f(t)\|_{\mathcal{L}^{\infty}}\leq 2R_{T}e^{\rho_{+}T}\|f_{0}\|_{\mathcal{L}^{\infty}}.
\end{split}
\end{equation}
It follows from (\ref{basiccnsv}), (\ref{rhocnsv}), (\ref{rhoh1time}), (\ref{nl2}) and the Gagliardo-Nirenberg's inequality that
\begin{equation}\label{GL2}
\begin{split}
&\|G\|_{L^2(0,T;L^2)}\leq \|n\|_{L^{\infty}(0,T;L^{\infty})}^{\frac{1}{2}}\|\sqrt{f}(u-v)\|_{L^2(0,T;\mathcal{L}^2)}+\gamma\|\rho^{\gamma-2}\rho_{x}\|_{L^{\infty}(0,T;L^{2})}\\
&\quad\quad\quad\quad\quad\quad\quad+\big{(}\|u\|_{L^{\infty}(0,T;L^{2})}+\beta\|\rho^{\beta-2}\rho_{x}\|_{L^{\infty}(0,T;L^{2})}\big{)}\|u_{x}\|_{L^2(0,T;L^{\infty})}\\
&\quad\quad\quad\quad\quad\quad\leq C_{T}\big{(}1+\|u_{x}\|_{L^2(0,T;L^{2})}^{\frac{1}{2}}\|u_{xx}\|_{L^2(0,T;L^{2})}^{\frac{1}{2}}\big{)}\\
&\quad\quad\quad\quad\quad\quad\leq C_{T}(1+\|u_{xx}\|_{L^2(0,T;L^{2})}^{\frac{1}{2}}).
\end{split}
\end{equation}
Multiplying (\ref{unorho}) by $-u_{xx}$, and integrating the resulted equation by parts over $\mathbb{T}\times[0,t]$, one concludes from (\ref{GL2}) and the Young's inequality that
\begin{equation}\nonumber
\begin{split}
&\|u_{x}(t)\|_{L^2}^2+\|u_{xx}\|_{L^2(0,t;L^2)}^2\\
&\quad\leq C\big{(}\|(u_{0})_{x}\|_{L^2}^2+\|u_{xx}\|_{L^2(0,t;L^2)}\|G\|_{L^2(0,t;L^2)}\big{)}\\
&\quad\leq C+C_{T}\big{(}\|u_{xx}\|_{L^2(0,t;L^2)}+\|u_{xx}\|_{L^2(0,t;L^2)}^{\frac{3}{2}}\big{)}\\
&\quad\leq C_{T}+\frac{1}{2}\|u_{xx}\|_{L^2(0,t;L^2)}^2,
\end{split}
\end{equation}
from which we infer
\begin{equation}\label{uh1}
\begin{split}
\sup_{t\in[0,T]}\|u_{x}(t)\|_{L^2}+\|u_{xx}\|_{L^2(0,T;L^2)}\leq C_{T}.
\end{split}
\end{equation}
By (\ref{rhocnsv}) and (\ref{unorho})-(\ref{uh1}), it also holds
\begin{equation}\label{utl2}
\begin{split}
\|u_{t}\|_{L^2(0,T;L^2)}=\|\mu(\rho)\rho^{-1} u_{xx}+G\|_{L^2(0,T;L^2)}\leq C_{T}.
\end{split}
\end{equation}
One may get from $(\ref{basiccnsv})_{3}$, (\ref{J}), (\ref{m}), (\ref{rhow1p}), (\ref{uh1}) and the Sobolev embedding $H^1(\mathbb{T})\hookrightarrow L^{\infty}(\mathbb{T})$ that
\begin{equation}\label{rhow1infty}
\begin{split}
&\sup_{t\in[0,T]}\|\rho_{x}(t)\|_{L^{\infty}}\leq \rho_{+}^{2} \sup_{t\in[0,T]}\|\big{(}U-u-\mathcal{I}(n)\big{)}(t)\|_{L^{\infty}}\\
&\quad\quad\quad\quad\quad\quad\quad~\leq \rho_{+}^{2} \sup_{t\in[0,T]}\big{(}\|U(t)\|_{L^{\infty}}+\|u(t)\|_{L^{\infty}}+\|n(t)\|_{L^1}\big{)}\leq C_{T}.
\end{split}
\end{equation}
The combination of (\ref{nl2}) and (\ref{uh1})-(\ref{rhow1infty}) leads to $(\ref{rhouh1cnsv})_{3}$. The proof of Lemma \ref{lemma23} is completed.
\end{proof}

 We are ready to establish the higher order estimates of $(\rho,u,f)$.
\begin{lemma}\label{lemma24}
Let $T>0$, and $(\rho,u,f)$ be any regular solution to the IVP $(\ref{m1})$-$(\ref{kappa})$ for $t\in(0,T]$. Then, under the assumptions of Theorem \ref{theorem11}, it holds
\begin{equation}
\begin{split}
&\sup_{t\in[0,T]}\Big{(}\|(\rho_{xx},u_{xx})(t)\|_{L^2}+\|\rho_{t}(t)\|_{H^1}+t^{\frac{1}{2}}\|(u_{xxx},u_{tx})(t)\|_{L^2}\\
&\quad\quad\quad\quad+\|f(t)\|_{\mathcal{C}^{1}}+\|f_{t}(t)\|_{\mathcal{C}^{0}}\Big{)}+\|(u_{xxx},u_{xt})\|_{L^2(0,T;L^2)}\leq C_{T},\label{rhoufh2cnsv}
\end{split}
\end{equation}
where $C_{T}>0$ is a constant.
\end{lemma}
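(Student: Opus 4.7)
The plan is to upgrade the estimates of Lemma \ref{lemma23} under the additional regularity assumption $(\ref{a11})$. The whole argument proceeds by first obtaining a time-derivative estimate on $u$ via an energy method, then using the momentum equation as an elliptic-in-space identity to bootstrap to higher spatial derivatives of $u$, then exploiting the transport structure of $(\ref{m1})_1$ to upgrade $\rho$, and finally differentiating along the bi-characteristics $(X^{x,v,t},V^{x,v,t})$ to treat $f$.

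First I would differentiate the momentum equation $(\ref{m1})_2$ in $t$ and test against $u_t$ in $L^2(\mathbb{T})$. The principal part contributes, after integration by parts, the dissipative term $\int_{\mathbb{T}} \mu(\rho)|u_{xt}|^2\,dx$, while the remainder (involving $\rho_t u_x$, the pressure, and the drag) is controlled using Lemma \ref{lemma23}, the Gagliardo--Nirenberg inequality, and the compact support in $v$ of $f$. This gives
\begin{equation}\nonumber
\sup_{t\in[0,T]}\|\sqrt{\rho}u_t(t)\|_{L^2}^2+\int_0^T\|u_{xt}(t)\|_{L^2}^2\,dt\le C_T,
\end{equation}
provided $\|u_t(0)\|_{L^2}$ is bounded, which follows from $(\ref{a11})$ by reading $(\ref{m1})_2$ pointwise at $t=0$ and using that $(\rho_0,u_0)\in H^2\times H^2$ with $\rho_0$ bounded below. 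Multiplying by $t$ and repeating the same estimate with the weight $t$ yields the weighted bounds $t^{1/2}u_t\in L^{\infty}(0,T;H^1)$.

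Next, I would use $(\ref{unorho})$ as an elliptic identity for $u$, namely $\mu(\rho)\rho^{-1}u_{xx}=G-u_t$. Taking $L^2$ norms gives $u_{xx}\in L^{\infty}(0,T;L^2)$. Differentiating this identity once in $x$ and invoking the bounds on $\rho_x,\rho_{xx}, u_x, u_{xx}, u_{xt}$ yields $u_{xxx}\in L^2(0,T;L^2)$, and the weighted version $t^{1/2}u_{xxx}\in L^{\infty}(0,T;L^2)$. For $\rho_{xx}$, I would differentiate $(\ref{m1})_1$ twice in $x$, test with $\rho_{xx}$ in $L^2(\mathbb{T})$, and apply Gr\"onwall's inequality using the newly obtained $L^2(0,T;L^2)$ bound on $u_{xxx}$ together with $\|\rho_x\|_{L^{\infty}(0,T;L^{\infty})}\le C_T$; this yields $\rho_{xx}\in L^{\infty}(0,T;L^2)$, and then $\rho_t=-(\rho u)_x\in L^{\infty}(0,T;H^1)$ follows immediately.

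Finally, for the Vlasov component I would differentiate $(\ref{XV})$ in $(x,v)$: the partial derivatives $\partial_{x,v}X^{x,v,t}$ and $\partial_{x,v}V^{x,v,t}$ satisfy a linear ODE system whose coefficients involve $\rho$, $\rho_x$, $u$ and $u_x$, all in $L^{\infty}(\mathbb{T}\times[0,T])$ by Lemma \ref{lemma23} and the Sobolev embedding $H^2(\mathbb{T})\hookrightarrow C^1(\mathbb{T})$ coming from the estimates above. Gr\"onwall's inequality then gives the pointwise bound on $(\partial_{x,v}X^{x,v,t},\partial_{x,v}V^{x,v,t})$ on $[0,T]$, which, inserted into the differentiated representation formula $(\ref{fformula})$, yields $f\in C([0,T];C^1(\mathbb{T}\times\mathbb{R}))$. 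The equation $(\ref{m1})_3$ then delivers $f_t\in C([0,T];C^0(\mathbb{T}\times\mathbb{R}))$. The main technical obstacle I anticipate is keeping the initial-layer bookkeeping clean in the first step: the bound on $\|u_t(0)\|_{L^2}$ must be derived from $(\ref{a11})$ in a way compatible with the drag term $\int_{\mathbb{R}}F_d f_0\,dv$, which is controlled by $\|f_0\|_{\mathcal{L}^{\infty}}$ and the initial compact support $R_0$; once this is in place, the rest is a bootstrap chain of standard estimates.
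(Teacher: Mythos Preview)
Your overall strategy (time-differentiate for $u_t$, then read $(\ref{unorho})$ as an elliptic identity for $u_{xx}$ and $u_{xxx}$, then transport for $\rho_{xx}$, then characteristics for $f$) is a legitimate alternative to the paper's route, but the ordering you propose has a circular dependency that you do not resolve. To get $u_{xxx}\in L^2(0,T;L^2)$ from the $x$-differentiated identity you explicitly invoke $\rho_{xx}$ (it enters $G_x$ through the term $-\gamma\rho^{\gamma-2}\rho_{xx}$); yet your Gr\"onwall for $\rho_{xx}$, obtained from the twice-differentiated continuity equation, needs precisely this $u_{xxx}$ bound (the term $\int_{\mathbb T}\rho\,u_{xxx}\rho_{xx}\,dx$ cannot be integrated away). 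A related issue is that $G_x$ and the time-differentiated drag in your first step involve $n_x,(nw)_x,n_t,(nw)_t$, which all require $f_x,f_t$---quantities you only bound at the very end.

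The paper avoids both problems by reordering. It first runs the characteristic argument for $f$: the variational ODEs for $\partial_{x,v}X,\partial_{x,v}V$ need only $\rho,\rho_x,u,u_x\in L^1(0,T;L^\infty)$, and $u\in L^2(0,T;H^2(\mathbb T))$ from Lemma~\ref{lemma23} already gives $u_x\in L^1(0,T;L^\infty)$, so $\|f\|_{\mathcal C^1}\le C_T$ (and hence $n_x,(nw)_x,n_t,(nw)_t\in L^\infty$) is available at the outset. It then obtains $\rho_{xx}$ \emph{without} any reference to $u_{xxx}$, by differentiating the equation $(\ref{bd})$ for the effective velocity $U$ and running a Gr\"onwall on $\|U_x\|_{L^2}$ using only Lemma~\ref{lemma23}; only afterwards is $u$ upgraded by testing the $x$-differentiated $(\ref{unorho})$ with $-u_{xxx}$, and the time-weighted bounds come last from $(\ref{ut})$ tested against $-tu_{txx}$. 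Your chain can be salvaged---do $f$ first, then either use the $U$-equation for $\rho_{xx}$, or substitute the elliptic bound $\|u_{xxx}\|_{L^2}\le C(\|u_{xt}\|_{L^2}+\|\rho_{xx}\|_{L^2}+C_T)$ directly into the Gr\"onwall for $\rho_{xx}$---but as written the bootstrap does not close.
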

\begin{proof}
Differentiating $(\ref{m1})_{3}$ with respect to $x$ and $v$ respectively, we have
\begin{equation}\label{fxfv}
\left\{
\begin{split}
&(f_{v})_{t}+v(f_{v})_{x}+\rho(u-v)(f_{v})_{v}-2\rho f_{v}=-f_{x},\\
&(f_{x})_{t}+v(f_{x})_{x}+\rho(u-v)(f_{x})_{v}-\rho f_{x}=\rho_{x}vf_{v}-(\rho u)_{x}f_{v}+\rho_{x}f.
\end{split}
\right.
\end{equation}
Thus, $f_{x}$ and $f_{v}$ can be re-written along the bi-characteristic curves $(X^{x,v,t}(s),V^{x,v,t}(s))$ defined by (\ref{XV}) for any $(x,v,t)\in\mathbb{T}\times\mathbb{R}\times[0,T]$ as
\begin{equation}\label{fxfv1}
\left\{
\begin{split}
&f_{v}(x,v,t)=e^{2\int_{0}^{t}\rho(X^{x,v,t}(s),s)ds}(f_{0})_{v}(X^{x,v,t}(0),V^{x,v,t}(0))\\
&\quad\quad\quad\quad\quad-\int_{0}^{t}e^{2\int_{s}^{t}\rho(X^{x,v,t}(\tau),\tau)d\tau}f_{x}(X^{x,v,t}(s),V^{x,v,t}(s),s)ds,\\
&f_{x}(x,v,t)=e^{\int_{0}^{t}\rho(X^{x,v,t}(s),s)ds}(f_{0})_{x}(X^{x,v,t}(0),V^{x,v,t}(0))\\
&\quad\quad\quad\quad\quad+\int_{0}^{t}e^{\int_{s}^{t}\rho(X^{x,v,t}(\tau),\tau)d\tau}\big{[}\rho_{x}vf_{v}-(\rho u)_{x}f_{v}+\rho_{x}f\big{]}(X^{x,v,t}(s),V^{x,v,t}(s),s)ds,
\end{split}
\right .
\end{equation}
which together with (\ref{rhocnsv}) and (\ref{rhouh1cnsv}) leads to
\begin{equation}\label{fvinfty}
\begin{split}
&\sup_{t\in[0,T]}\|f_{v}(t)\|_{\mathcal{L}^{\infty}}\leq e^{2\rho_{+}T}\Big{(}\|(f_{0})_{v}\|_{\mathcal{L}^{\infty}}+\int_{0}^{T}\|f_{x}(t)\|_{\mathcal{L}^{\infty}}dt\Big{)},
\end{split}
\end{equation}
and
\begin{equation}\label{fxinfty}
\begin{split}
&\sup_{t\in[0,T]}\|f_{x}(t)\|_{\mathcal{L}^{\infty}}\leq e^{\rho_{+}T}\Big{(}\|(f_{0})_{x}\|_{\mathcal{L}^{\infty}}+\int_{0}^{T}\|\big{[}\rho_{x}vf_{v}-(\rho u)_{x}f_{v}+\rho_{x}f\big{]}(t)\|_{\mathcal{L}^{\infty}}dt\Big{)}\\
&\quad\quad\quad\quad\quad\quad\quad~\leq e^{\rho_{+}T}\Big{(}\|(f_{0})_{x}\|_{\mathcal{L}^{\infty}}+\int_{0}^{T}\big{(}\|\rho_{x}(t)\|_{L^{\infty}}R_{T}\|f_{v}(t)\|_{\mathcal{L}^{\infty}}\\
&\quad\quad\quad\quad\quad\quad\quad\quad~+\|(\rho u)_{x}(t)\|_{L^{\infty}}\|f_{v}(t)\|_{\mathcal{L}^{\infty}}+\|\rho_{x}(t)\|_{L^{\infty}}\|f(t)\|_{\mathcal{L}^{\infty}}\big{)}dt\Big{)}\\
&\quad\quad\quad\quad\quad\quad\quad~\leq C_{T}\Big{(}\|(f_{0})_{x}\|_{\mathcal{L}^{\infty}}+1+\int_{0}^{T}\big{(}1+\|u(t)\|_{H^2}\big{)}\|f_{v}(t)\|_{\mathcal{L}^{\infty}}dt\Big{)},
\end{split}
\end{equation}
where the constants $\rho_{+}>0$ and $R_{T}>0$ are given by (\ref{rhocnsv}) and $(\ref{rhouh1cnsv})_{2}$ respectively. Substituting (\ref{fxinfty}) into (\ref{fvinfty}), and employing a Gr${\rm{\ddot{o}}}$nwall type argument, we obtain
\begin{equation}\label{fw1inftycnsv}
\begin{split}
\sup_{t\in[0,T]}\|(f_{x},f_{v})(t)\|_{\mathcal{L}^{\infty}}\leq C_{T}e^{\|u\|_{L^1(0,T;H^2)}}\big{(}\|[(f_{0})_{x},(f_{0})_{v}]\|_{\mathcal{L}^{\infty}}+1\big{)}\leq C_{T}.
\end{split}
\end{equation}
By (\ref{rhocnsv}), (\ref{rhouh1cnsv}),  $(\ref{m33})$ and (\ref{fw1inftycnsv}), it holds
\begin{equation}
\begin{split}
&\sup_{t\in[0,T]}\|f_{t}(t)\|_{\mathcal{L}^{\infty}}\\
&\quad=\sup_{t\in[0,T]}\|\big{[}vf_{x}+\rho(u-v)f_{v}-\rho f\big{]}(t)\|_{\mathcal{L}^{\infty}}\\
&\quad\leq \sup_{t\in[0,T]}\Big{(}R_{T}\|f_{x}(t)\|_{\mathcal{L}^{\infty}}+\rho_{+}\big{(}\|u(t)\|_{L^{\infty}}+R_{T}\big{)}\|f_{v}(t)\|_{\mathcal{L}^{\infty}}+\rho_{+}\|f(t)\|_{\mathcal{L}^{\infty}}\Big{)}\leq C_{T}.\label{fw1inftycnsv2}
\end{split}
\end{equation}
Then one deduces by (\ref{rhouh1cnsv}) and (\ref{fw1inftycnsv})-(\ref{fw1inftycnsv2}) that
\begin{equation}\label{nxt}
\left\{
\begin{split}
&\sup_{t\in[0,T]}\|(n_{x},n_{t})(t)\|_{L^{\infty}}\leq 2R_{T}\sup_{t\in[0,T]}\|(f_{x},f_{t})(t)\|_{\mathcal{L}^{\infty}}\leq C_{T},\\
&\sup_{t\in[0,T]}\|[(nw)_{x},(nw)_{t}](t)\|_{ L^{\infty}}\leq 2R_{T}^2\sup_{t\in[0,T]}\|(f_{x},f_{t})(t)\|_{\mathcal{L}^{\infty}}\leq C_{T}.
\end{split}
\right.
\end{equation}

We turn to establish the $H^2$-norm estimate of $\rho$. Dividing (\ref{bd}) by $\rho$, and differentiating the resulted equation with respect to $x$, we have
\begin{equation}\label{Ux}
\begin{split}
&\frac{d}{dt}U_{x}+\gamma\rho^{\gamma}\mu(\rho)^{-1}U_{x}=-(uU_{x})_{x}-\gamma(\rho^{\gamma}\mu(\rho)^{-1})_{x}\rho^{-2}\mu(\rho)\rho_{x}+\gamma\rho^{\gamma}\mu(\rho)^{-1}(u_{x}+n).
\end{split}
\end{equation}
Multiplying (\ref{Ux}) by $U_{x}$, and integrating the resulted equation by parts over $\mathbb{T}$, one deduces by (\ref{rhocnsv}) and (\ref{rhouh1cnsv}) that
\begin{equation}\nonumber
\begin{split}
&\frac{1}{2}\frac{d}{dt}\|U_{x}(t)\|_{L^2}^2+\gamma\int_{\mathbb{T}}(\rho^{\gamma}\mu(\rho)^{-1}|U_{x}|^2)(x,t)dx\\
&~~\leq \frac{1}{2}\|u_{x}(t)\|_{L^{\infty}}\|U_{x}(t)\|_{L^2}^2+C_{T}\big{(}\|\rho_{x}(t)\|_{L^2}\|\rho_{x}(t)\|_{L^{\infty}}+\|u_{x}(t)\|_{L^{\infty}}+\|n(t)\|_{L^2}\big{)}\|U_{x}(t)\|_{L^2}\\
&~~\leq C_{T}\big{(}\|u_{x}(t)\|_{H^1}+1\big{)}\|U_{x}(t)\|_{L^2}^2+C_{T}\|u_{x}(t)\|_{H^1}^2+C_{T},
\end{split}
\end{equation}
which implies
\begin{equation}
\begin{split}
&\sup_{t\in[0,T]}\|U_{x}(t)\|_{L^2}^2\leq e^{C_{T}(\|u_{x}\|_{L^1(0,T;H^1)}+1)}\big{(}\|U_{x}(0)\|_{L^2}^2+C_{T}\|u_{x}\|_{L^2(0,T;H^1)}^2+C_{T}\big{)}.\label{rhoh22}
\end{split}
\end{equation}
Thereby, it follows from  (\ref{rhocnsv}), (\ref{rhouh1cnsv}), (\ref{m}) and (\ref{rhoh22}) that
\begin{equation}\label{rhoh2}
\begin{split}
&\sup_{t\in[0,T]}\|\rho_{xx}(t)\|_{L^2}\leq \rho_{+}^2\sup_{t\in[0,T]}\|\big{(}U_{x}-u_{x}-n\big{)}(t)\|_{L^2}\\
&\quad\quad\quad\quad\quad\quad\quad~\leq \rho_{+}^2\sup_{t\in[0,T]}\big{(}\|U_{x}(t)\|_{L^2}+\|u_{x}(t)\|_{L^2}+\|n(t)\|_{L^2}\big{)}\leq C_{T}.
\end{split}
\end{equation}

To establish the $H^2$-estimate of $u$, we differentiate the equation (\ref{unorho}) with respect to variable $x$ to have
\begin{equation}
\begin{split}
u_{xt}-\mu(\rho)\rho^{-1}u_{xxx}=G_{x}+(\mu(\rho)\rho^{-1})_{x}u_{xx}.\label{ux}
\end{split}
\end{equation}
Multiplying (\ref{ux}) by $-u_{xxx}$, integrating the resulted equation by parts over $\mathbb{T}\times[0,t]$, and making use of (\ref{basiccnsv}), (\ref{rhocnsv}), (\ref{rhouh1cnsv}), (\ref{nxt}) and (\ref{rhoh2}), one may get after a complicated computation  that
\begin{equation}\label{rhouh2cnsv}
\begin{split}
&\sup_{t\in[0,T]}\|u_{xx}(t)\|_{L^2}^2+\|u_{xxx}\|_{L^2(0,T;L^2)}^2\\
&\quad\leq C\big{(}\|(u_{0})_{x}\|_{H^1}^2+\|G_{x}+(\mu(\rho)\rho^{-1})_{x}u_{xx}\|_{L^{2}(0,T;L^2)}^2\big{)}\\
&\quad\leq C_{T}\big{(}1+\|(n,nw)\|_{L^{\infty}(0,T;H^1)}^2+\|\rho\|_{L^{\infty}(0,T;H^2)}^2+\|\rho_{x}\|_{L^{\infty}(0,T;L^{\infty})}\|u_{xx}\|_{L^2(0,T;L^2)}\\
&\quad\quad+\|\rho_{xx}\|_{L^{\infty}(0,T;L^2)}\|u_{x}\|_{L^2(0,T;L^{\infty})}\big{)}\leq C_{T}.
\end{split}
\end{equation}
By $(\ref{m1})_{1}$, (\ref{unorho}) and (\ref{rhoh2})-(\ref{rhouh2cnsv}), it also holds
 \begin{equation}\label{rhouh2cnsv1}
 \left\{
 \begin{split}
&\sup_{t\in {0,T}}\|\rho_{t}(t)\|_{H^1}= \sup_{t\in [0,T]}\|(\rho u)_{x}(t)\|_{H^1}\leq C\sup_{t\in [0,T]}\|\rho(t)\|_{H^2}\|u(t)\|_{H^2}\leq C_{T},\\
&~~\|u_{xt}\|_{L^2(0,T;L^2)}=\|\rho^{-1}\mu(\rho)u_{xxx}+G_{x}+(\mu(\rho)\rho^{-1})_{x}u_{xx}\|_{L^2(0,T;L^2)}\leq C_{T}.
\end{split}
\right.
\end{equation}

We are ready to establish the time-weighted estimates of $u$. We differentiate  (\ref{unorho}) with respect to $t$ to get
 \begin{equation}\label{ut}
 \begin{split}
 u_{tt}-\mu(\rho)\rho^{-1}u_{txx}=G_{t}+(\mu(\rho)\rho^{-1})_{t}u_{xx}.
\end{split}
\end{equation}
Multiplying (\ref{ut}) by $-tu_{txx}$, integrating the resulted equation by parts over $\mathbb{T}$, and making use of the estimates (\ref{rhocnsv}), (\ref{nxt}), (\ref{rhoh2}) and (\ref{rhouh2cnsv})-(\ref{rhouh2cnsv1}), we have
 \begin{equation}\label{buxiang}
 \begin{split}
 &\frac{1}{2}\frac{d}{dt}\Big{(}t\|u_{xt}(t)\|_{L^2}^2\Big{)}+\frac{t}{\rho_{+}}\|u_{xxt}(t)\|_{L^2}^2\\
 &~~\leq\frac{1}{2}\|u_{xt}(t)\|_{L^2}^2+t\|u_{xxt}(t)\|_{L^2}\|[G_{t}+(\mu(\rho)\rho^{-1})_{t}u_{xx}](t)\|_{L^2}\\
  &~~\leq \frac{1}{2}\|u_{xt}(t)\|_{L^2}^2+C_{T}t\|u_{xxt}(t)\|_{L^2} \big{(} 1+\|u_{xt}(t)\|_{L^2} \big{)}\\
  &~~\leq\frac{t}{2\rho_{+}}\|u_{xxt}(t)\|_{L^2}^2+ C_{T}t\|u_{xt}(t)\|_{L^2}^2+\frac{1}{2}\|u_{xt}(t)\|_{L^2}^2+C_{T}.
 \end{split}
 \end{equation}
Thus, one deduces by (\ref{rhouh2cnsv1}), (\ref{buxiang}), the Gr$\rm{\ddot{o}}$nwall's inequality and the fact $tu_{xt}|_{t=0}=0$ that
 \begin{equation}\label{tut}
 \begin{split}
 \sup_{t\in[0,T]}t^{\frac{1}{2}}\|u_{xt}(t)\|_{L^2}\leq C_{T}.
 \end{split}
 \end{equation}
By (\ref{rhocnsv}), (\ref{rhoh2})-(\ref{rhouh2cnsv1}) and (\ref{tut}), it also holds
  \begin{equation}\label{tux}
 \begin{split}
 &\sup_{t\in[0,T]}t^{\frac{1}{2}}\|u_{xxx}(t)\|_{L^2}\\
 &\quad\leq \rho_{+} \sup_{t\in[0,T]}t^{\frac{1}{2}}\|\rho^{-1}\mu(\rho)u_{xxx}(t)\|_{L^2}\\
 &\quad= \rho_{+}\sup_{t\in[0,T]}t^{\frac{1}{2}}\|\big{[}u_{xt}-G_{x}-(\mu(\rho)\rho^{-1})_{x}u_{xx}\big{]}(t)\|_{L^2}\leq C_{T}.
 \end{split}
 \end{equation}
 Since it follows by (\ref{rhoh2}) and (\ref{rhouh2cnsv}) that
 $$
 \sup_{t\in[0,T]}\|(\rho,u)(t)\|_{C^{1}}\leq C \sup_{t\in[0,T]}\|(\rho,u)(t)\|_{H^{2}}\leq C_{T},
 $$
 similarly to (\ref{fxfv})-(\ref{fw1inftycnsv2}),  one can show
  \begin{equation}\label{fc1t}
 \begin{split}
 &\sup_{0\leq t\leq T}\big{(}\|f(t)\|_{\mathcal{C}^1}+\|f_{t}(t)\|_{\mathcal{C}^{0}}\big{)}\leq C_{T}.
  \end{split}
 \end{equation}
The combination of (\ref{fw1inftycnsv})-(\ref{fw1inftycnsv2}), (\ref{rhoh2}), (\ref{rhouh2cnsv})-(\ref{rhouh2cnsv1}) and (\ref{tut})-(\ref{fc1t}) gives rise to (\ref{rhoufh2cnsv}). The proof of Lemma \ref{lemma24} is completed.
\end{proof}

Inspired by the arguments as in \cite{straskraba1}, we turn to show that the $L^1$-norm of the pressure $P(\rho)$ is strictly positive for large time, which is essential to establish the lower bound of $\rho$ uniformly in time.
\begin{lemma}\label{lemma25}
Let $(\rho,u,f)$ be any regular solution to the IVP $(\ref{m1})$-$(\ref{kappa})$. Then, under the assumptions of Theorem \ref{theorem11}, there is a sufficiently large time $T_{0}>0$ such that it holds
\begin{equation}\begin{split}
& \int_{\mathbb{T}} \rho^{\gamma}(x,t)dx\geq P_{0}>0,\quad t\geq T_{0},\label{pressure}
\end{split}\end{equation}
where $P_{0}>0$ is a constant independent of time $t$.
\begin{proof}
We claim that it holds
\begin{equation}\begin{split}
\lim_{t \to +\infty}\|(\rho-\overline{\rho_{0}})(t)\|_{L^2}=0,\label{rhobehaviorlp}
\end{split}\end{equation}
where the constant $\overline{\rho_{0}}>0$ is given by $(\ref{rhoinfty})_{1}$. By virtue of (\ref{rhobehaviorlp}), one can prove
\begin{equation}\begin{split}\nonumber
&\lim_{t\rightarrow \infty}\Big{|}\int_{\mathbb{T}}\rho^{\gamma}(x,t)dx-\overline{\rho_{0}}^{\gamma}\Big{|}\\
&\quad\leq \lim_{t\rightarrow \infty}\int_{\mathbb{T}}|\rho^{\gamma}(x,t)-\overline{\rho_{0}}^{\gamma}|dx\\
&\quad\leq \gamma(\rho_{+}+\overline{\rho_{0}})^{\gamma-1} \lim_{t\rightarrow \infty}\|(\rho-\overline{\rho_{0}})(t)\|_{L^2}=0,
\end{split}\end{equation}
which gives rise to (\ref{pressure}).

Indeed, the long time behavior (\ref{rhobehaviorlp}) can be shown by Lemmas \ref{lemma21}-\ref{lemma22} and relative entropy estimates for the compressible Navier-Stokes equations $(\ref{m1})_{1}$-$(\ref{m1})_{2}$. Define
\begin{equation}\nonumber
\left\{
\begin{split}
&\mathcal{E}_{NS}^{\eta}(t)=:\int_{\mathbb{T}}\big{(}\frac{1}{2}\rho|u-m_{1}|^2+\Pi_{\gamma}(\rho|\overline{\rho_{0}})\big{)}(x,t)dx-\eta\int_{\mathbb{T}}[\rho(u-m_{1})\mathcal{I}(\rho-\overline{\rho_{0}})](x,t)dx,\\
 &\mathcal{D}_{NS}^{\eta}(t)=:\int_{\mathbb{T}}(\mu(\rho)|u_{x}|^2)(x,t)dx+\eta\int_{\mathbb{T}}[(\rho^{\gamma}-\overline{\rho_{0}}^{\gamma})(\rho-\overline{\rho_{0}})](x,t)dx\\
&\quad\quad\quad\quad-\eta\int_{\mathbb{T}}\big{[}\overline{\rho_{0}}\rho|u-m_{1}|^2+\mu(\rho)u_{x}(\rho-\overline{\rho_{0}})\big{]}(x,t)dx\\
&\quad\quad\quad\quad+\eta\int_{\mathbb{T}}\big{[}\mathcal{I}(\rho-\overline{\rho_{0}})\big{(}-\rho n(u-w)+\frac{\rho}{\overline{\rho_{0}}}\int_{\mathbb{T}} [\rho n (u-w)](y,t)dy\big{)}\big{]}(x,t)dx,
\end{split}
\right.
\end{equation}
where the operator $\mathcal{I}$ is denoted as (\ref{j}), and $m_{1}(t)$ and $\Pi_{\gamma}(\rho|\overline{\rho_{0}})$ are given by
\begin{equation}\label{m1m2}
\left\{
\begin{split}
&m_{1}(t)=:\frac{\int_{\mathbb{T}}\rho u(x,t)dx}{\int_{\mathbb{T}}\rho_{0}(x)dx},\\
&\Pi_{\gamma}(\rho|\overline{\rho_{0}})=:\frac{\rho^{\gamma}}{\gamma-1}-\frac{\overline{\rho_{0}}^{\gamma}}{\gamma-1}-\frac{\gamma\overline{\rho_{0}}^{\gamma-1}}{\gamma-1}(\rho-\overline{\rho_{0}}).
\end{split}
\right.
\end{equation}
It is easy to verify
\begin{equation}\label{entropyfluid22}
\begin{split}
&\frac{d}{dt}\mathcal{E}_{NS}^{\eta}(t)+\mathcal{D}_{NS}^{\eta}(t)=-\int_{\mathbb{T}}[\rho n(u-w)(u-m_{1})](x,t)dx.
\end{split}
\end{equation}
One can choose a suitably small constant $\eta>0$ to have
\begin{equation}\label{ENSsim}
\left\{
\begin{split}
&\mathcal{E}_{NS}^{\eta}(t)\leq C_{6}(\|(\rho-\overline{\rho_{0}})(t)\|_{L^2}^2+\|\big{(}\sqrt{\rho}(u-m_{1})\big{)}(t)\|_{L^2}^2),\\
&\mathcal{E}_{NS}^{\eta}(t)\geq \frac{1}{C_{6}}(\|(\rho-\overline{\rho_{0}})(t)\|_{L^2}^2+\|\big{(}\sqrt{\rho}(u-m_{1})\big{)}(t)\|_{L^2}^2),\\
&\mathcal{D}^{\eta}_{NS}(t) \geq \frac{1}{2}\|u_{x}(t)\|_{L^2}^2+ C_{7}\mathcal{E}_{NS}^{\eta}(t)-C_{8}\|\big{(}\sqrt{\rho f}(u-v)\big{)}(t)\|_{\mathcal{L}^2}^2,
\end{split}
\right.
\end{equation}
where $C_{6}>1$, $C_{i}>0, i=7,8,$ are constants independent of time $t$, the interested reader can refer to \cite{lhl1} for the details.

In addition, one can show
\begin{equation}\label{pp3}
\begin{split}
&\Big{|}\int_{\mathbb{T}}[\rho n(u-w)(u-m_{1})](x,t)dx\Big{|}\\
&\quad\leq \rho_{+}^{\frac{1}{2}}\|(u-m_{1})(t)\|_{L^{\infty}}\|f(t)\|_{\mathcal{L}^1}^{\frac{1}{2}}\|\big{(}\sqrt{\rho f}(u-v)\big{)}(t)\|_{\mathcal{L}^2}\\
&\quad\leq\frac{1}{2}\|u_{x}(t)\|_{L^2}^2+\frac{\rho_{+}\|f_{0}\|_{\mathcal{L}^1}}{2}\|\big{(}\sqrt{\rho f}(u-v)\big{)}(t)\|_{\mathcal{L}^2}^2,
\end{split}
\end{equation}
where we have used $(\ref{basiccnsv})_{2}$, (\ref{rhocnsv}) and the fact
\begin{equation}\label{um1infty}
\begin{split}
&(u-m_{1})(x,t)=\frac{\int_{\mathbb{T}} \rho(y,t)\int^{x}_{y}u_{z}(z,t)dzdy}{\int_{\mathbb{T}} \rho(y,t)dy}\leq \|u_{x}(t)\|_{L^2},\quad x\in\mathbb{T},\quad t>0.
\end{split}
\end{equation}
Thus, it follows from (\ref{entropyfluid22})-(\ref{pp3}) that
\begin{equation}\label{ddtens}
\begin{split}
&\frac{d}{dt}\mathcal{E}_{NS}^{\eta}(t)+C_{7}\mathcal{E}_{NS}^{\eta}(t)\leq C_{9}\|\big{(}\sqrt{\rho f}(u-v)\big{)}(t)\|_{\mathcal{L}^2}^2,\quad C_{9}=: \frac{\rho_{+}\|f_{0}\|_{\mathcal{L}^1}+2C_{8}}{2}.
\end{split}
\end{equation}
Applying the the Gr${\rm{\ddot{o}}}$nwall's inequality to (\ref{ddtens}), we get
\begin{equation}\label{entropyfluid222}
\begin{split}
&\mathcal{E}_{NS}^{\eta}(t)\leq e^{-C_{7} t}\mathcal{E}_{NS}^{\eta}(0)+C_{9}e^{-\frac{C_{7}t}{2}}\int_{0}^{\frac{t}{2}}\|\big{(}\sqrt{\rho f}(u-v)\big{)}(s)\|_{\mathcal{L}^2}^2ds\\
&\quad\quad\quad\quad+C_{9}\int_{\frac{t}{2}}^{t}\|\big{(}\sqrt{\rho f}(u-v)\big{)}(s)\|_{\mathcal{L}^2}^2ds.
\end{split}
\end{equation}
Since it follows $\|\big{(}\sqrt{\rho f}(u-v)\big{)}(s)\|_{\mathcal{L}^2}^2\in L^1(\mathbb{R}_{+})$ by $(\ref{basiccnsv})_{4}$, the each term on the right-hand side of (\ref{entropyfluid222}) tends to $0$ as $t\rightarrow \infty$, and therefore it holds
\begin{equation}\nonumber
\begin{split}
\lim_{t\rightarrow\infty}\mathcal{E}_{NS}^{\eta}(t)=0,
\end{split}
\end{equation}
which together with $(\ref{ENSsim})_{2}$ leads to  (\ref{rhobehaviorlp}).
\end{proof}
\end{lemma}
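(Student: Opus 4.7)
The plan is to obtain $(\ref{pressure})$ directly from Jensen's inequality together with mass conservation, avoiding any use of large-time asymptotics for the full coupled system. Since $\mathbb{T} = \mathbb{R}/\mathbb{Z}$ carries unit Lebesgue measure, $dx$ is a probability measure on $\mathbb{T}$; as $s \mapsto s^{\gamma}$ is convex on $[0,\infty)$ for $\gamma > 1$ and $\rho(\cdot,t) \geq 0$ by $(\ref{basiccnsv})_{1}$, Jensen gives
\[
\int_{\mathbb{T}} \rho^{\gamma}(x,t)\,dx \;\geq\; \Bigl(\int_{\mathbb{T}}\rho(x,t)\,dx\Bigr)^{\gamma} \;=\; \overline{\rho_{0}}^{\,\gamma}, \qquad t \geq 0,
\]
where the equality uses the mass conservation $(\ref{basiccnsv})_{2}$ and the definition $(\ref{rhoinfty})_{1}$ of $\overline{\rho_{0}}$. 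One may therefore take $P_{0} := \overline{\rho_{0}}^{\,\gamma} > 0$ and $T_{0} := 0$, which actually proves a stronger, time-independent version of the lemma. Since the subsequent use of $(\ref{pressure})$ in the proof of the uniform-in-time lower bound for $\rho$ only requires positivity of $\int_{\mathbb{T}}\rho^{\gamma}$ on a set of times of positive measure going to infinity, this short Jensen estimate already closes the argument.

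If instead a sharper piece of information is wanted for later sections, in particular $\int_{\mathbb{T}}\rho^{\gamma}(\cdot,t)\,dx \to \overline{\rho_{0}}^{\,\gamma}$ as $t \to \infty$, which would be relevant for Theorem \ref{decay}, then my alternative plan is a relative entropy method on the Navier-Stokes subsystem $(\ref{m1})_{1}$-$(\ref{m1})_{2}$. I would introduce the functional
\[
\mathcal{E}(t) = \int_{\mathbb{T}}\Bigl(\tfrac{1}{2}\rho|u - m_{1}(t)|^{2} + \Pi_{\gamma}(\rho|\overline{\rho_{0}})\Bigr)\,dx,
\]
with $m_{1}(t)$ the conserved total momentum per unit mass and $\Pi_{\gamma}$ the usual relative internal energy, and perturb it by a correction of the form $-\eta\int_{\mathbb{T}}\rho(u-m_{1})\mathcal{I}(\rho-\overline{\rho_{0}})\,dx$ so as to recover a coercive dissipation in the density variation through the $(\rho^{\gamma}-\overline{\rho_{0}}^{\,\gamma})(\rho-\overline{\rho_{0}})$ term it generates. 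The drag coupling contributes a source term controlled by $\|\sqrt{\rho f}(u-v)\|_{\mathcal{L}^{2}}^{2}$, which is in $L^{1}(\mathbb{R}_{+})$ by $(\ref{basiccnsv})_{4}$, so a Grönwall argument then forces $\mathcal{E}(t) \to 0$ and hence $\rho(t) \to \overline{\rho_{0}}$ in $L^{2}(\mathbb{T})$.

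The main obstacle in this richer route is the choice of the perturbation parameter $\eta > 0$: it must be small enough that $\mathcal{E}$ remains equivalent to $\|\rho-\overline{\rho_{0}}\|_{L^{2}}^{2} + \|\sqrt{\rho}(u-m_{1})\|_{L^{2}}^{2}$ via the uniform bounds on $\rho$ from Lemma \ref{lemma22}, yet large enough that the perturbed dissipation absorbs both the cross terms produced by differentiating the correction in time (which involve $\mathcal{I}(\rho-\overline{\rho_{0}})$ tested against the momentum equation) and the drag-induced source arising from the non-conservative coupling $-\int \rho n(u-w)(u-m_{1})\,dx$. For the precise statement of Lemma \ref{lemma25}, however, no such delicate tuning is required, and the Jensen inequality route is the most direct.
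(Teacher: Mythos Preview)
Your Jensen-inequality argument is correct and strictly more elementary than the paper's proof: since $\mathbb{T}$ has unit measure, $\rho\geq 0$, and $s\mapsto s^{\gamma}$ is convex, mass conservation $(\ref{basiccnsv})_{2}$ gives $\int_{\mathbb{T}}\rho^{\gamma}\,dx\geq\overline{\rho_{0}}^{\,\gamma}$ for every $t\geq 0$, so one may take $P_{0}=\overline{\rho_{0}}^{\,\gamma}$ and $T_{0}=0$. The paper instead proves the stronger asymptotic statement $\|(\rho-\overline{\rho_{0}})(t)\|_{L^{2}}\to 0$ via a perturbed relative-entropy functional $\mathcal{E}_{NS}^{\eta}$ for the Navier--Stokes subsystem, deduces $\int_{\mathbb{T}}\rho^{\gamma}\to\overline{\rho_{0}}^{\,\gamma}$, and only then extracts the lower bound for large $t$. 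Your route is preferable for Lemma~\ref{lemma25} itself; what the paper's route buys is the $L^{2}$ convergence of $\rho$, which is information of the kind eventually needed for Theorem~\ref{decay} (and you correctly anticipated this in your second paragraph). One small inaccuracy in your sketch of the alternative: $m_{1}(t)$ is not conserved here, since the drag exchanges momentum with the particle phase (cf.\ $(\ref{momentumfluid})$); the paper handles this by absorbing the resulting source into the $\|\sqrt{\rho f}(u-v)\|_{\mathcal{L}^{2}}^{2}$ term, exactly as you outlined.
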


\begin{lemma}\label{lemma26}
Let $T>0$, and $(\rho,u,f)$ be any regular solution to the IVP $(\ref{m1})$-$(\ref{kappa})$ for $t\in(0,T]$. Then, under the assumptions of Theorem \ref{theorem11}, it holds
\begin{align}
&\rho(x,t)\geq \rho_{-}>0,\quad (x,t)\in \mathbb{T}\times[0,T],\label{lowerboundrho}
\end{align}
where $\rho_{-}>0$ is a constant independent of time $T>0$.
\end{lemma}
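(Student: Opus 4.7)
The plan is to promote the time-dependent lower bound of Lemma \ref{lemma22} to a uniform one by combining the Zlotnik-type selection used in (\ref{cutcut}) with the pressure lower bound of Lemma \ref{lemma25}. For $t\in[0,T_0]$, with $T_0$ the fixed time from Lemma \ref{lemma25}, the estimate $\rho\geq 1/C_{T_0}$ obtained in (\ref{rhocnsv}) is already uniform in $t$ (since $T_0$ depends only on the initial data), so the work concentrates on $t\geq T_0$.

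Fix $(x,t)$ with $t\geq T_0$ and examine the particle path $\mathcal{X}^{x,t}(s)$. Pick a threshold $\rho_{-}^{*}>0$ to be determined. Either $\rho(\mathcal{X}^{x,t}(s),s)\geq \rho_{-}^{*}$ throughout $[T_0,t]$ (and the bound is immediate), or there exists a last time $s_{*}\in[T_0,t]$ with $\rho(\mathcal{X}^{x,t}(s_{*}),s_{*})=\rho_{-}^{*}$ and $\rho(\mathcal{X}^{x,t}(\tau),\tau)<\rho_{-}^{*}$ on $(s_{*},t]$. Integrating (\ref{newm4}) over $[s_{*},t]$ and rearranging gives
\begin{equation*}
\theta(\rho)(x,t)=\theta(\rho_{-}^{*})+\bigl[\mathcal{I}(\rho u+\rho\mathcal{I}(n))\bigr]_{s_{*}}^{t}+\int_{s_{*}}^{t}\bigl[F-\rho^{\gamma}\bigr](\mathcal{X}(\tau),\tau)\,d\tau-\int_{s_{*}}^{t}\!\int_{0}^{1}\mu(\rho)u_{y}\,dy\,d\tau.
\end{equation*}

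The $\mathcal{I}$-boundary terms are bounded by $2C_{2}$ from (\ref{F}), and the dissipation integral is controlled by $(1+\rho_{+}^{\beta})^{1/2}\sqrt{(t-s_{*})E_{0}}$ via Cauchy-Schwarz and $(\ref{basiccnsv})_{4}$, growing only as $\sqrt{t-s_{*}}$. For the dominant term, the explicit form of $F$ in (\ref{F}) separates into a sign-definite piece $\int_{0}^{1}\rho^{\gamma}(y,\tau)\,dy$, which is bounded below by $P_{0}$ on $[T_{0},t]$ by Lemma \ref{lemma25}, and two sign-indefinite pieces $\mathcal{I}(\rho)\int nw\,dy$ and $\int\rho u\,\mathcal{I}(n)\,dy$, bounded in absolute value by a uniform constant $C_{11}$ via $(\ref{basiccnsv})_{4}$ and (\ref{J}). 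Combined with $\rho^{\gamma}(\mathcal{X}(\tau),\tau)\leq (\rho_{-}^{*})^{\gamma}$ on $[s_{*},t]$, we obtain $F-\rho^{\gamma}\geq P_{0}-C_{11}-(\rho_{-}^{*})^{\gamma}$ there. Selecting $\rho_{-}^{*}$ so small that $(\rho_{-}^{*})^{\gamma}\leq P_{0}/4$, the integrand is bounded below by a strictly positive constant provided $C_{11}\leq P_{0}/2$; the resulting linear-in-time positive growth absorbs the $\sqrt{t-s_{*}}$ loss from the dissipation term, producing $\theta(\rho)(x,t)\geq -M$ for a constant $M$ independent of $t$, hence $\rho(x,t)\geq \rho_{-}>0$.

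The main obstacle is securing the smallness $C_{11}\leq P_{0}/2$. Inspecting the indefinite contributions, they are controlled by quantities such as $\|\sqrt{\rho f}(u-v)\|_{\mathcal{L}^{2}}$ and $\|\sqrt{\rho}(u-m_{1})\|_{L^{2}}$, which decay to zero along the relative-entropy argument in the proof of Lemma \ref{lemma25} (there $\mathcal{E}_{NS}^{\eta}(t)\to 0$ and $\|\sqrt{\rho f}(u-v)(\cdot)\|_{\mathcal{L}^{2}}^{2}\in L^{1}(\mathbb{R}_{+})$). By enlarging $T_{0}$ if necessary, $C_{11}$ can be made arbitrarily small relative to the fixed $P_{0}$, completing the argument and yielding the uniform lower bound $\rho_{-}>0$ in (\ref{lowerboundrho}).
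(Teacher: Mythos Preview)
Your proof has a genuine gap in the last paragraph. The two sign-indefinite pieces of $F$ that you single out, namely $\mathcal{I}(\rho)\int_{0}^{1}nw\,dy$ and $\int_{0}^{1}\rho u\,\mathcal{I}(n)\,dy$, are \emph{not} controlled by the decaying quantities $\|\sqrt{\rho f}(u-v)\|_{\mathcal{L}^{2}}$ or $\|\sqrt{\rho}(u-m_{1})\|_{L^{2}}$, and they do \emph{not} become small as $T_{0}\to\infty$. For instance $\int_{0}^{1}nw\,dy=\overline{nw}(t)$ is the total particle momentum, which converges to $\overline{n_{0}}\,u_{s}$ (generically nonzero), while $|\mathcal{I}(\rho)|$ stays of order $\overline{\rho_{0}}$ uniformly in time; likewise $\int_{0}^{1}\rho u\,\mathcal{I}(n)\,dy$ contains the piece $m_{1}(t)\int_{0}^{1}\rho\,\mathcal{I}(n)\,dy$, and $m_{1}(t)\to u_{s}\neq 0$ in general. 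Hence the constant $C_{11}$ you extract from these terms cannot be made $\leq P_{0}/2$ by enlarging $T_{0}$, and the linear-in-time gain you need never materialises: with the effective velocity $u+\mathcal{I}(n)$, the indefinite contributions to $\int_{s_{*}}^{t}F\,d\tau$ grow linearly in $t-s_{*}$ with a coefficient that competes with, rather than being dominated by, $P_{0}$.

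The paper circumvents exactly this obstruction by switching to a \emph{different} effective velocity, $u-m_{1}(t)$, and rewriting the momentum equation accordingly (equation (\ref{m21111})). The resulting analogue $A(x,t)$ of your $F$ (see (\ref{a})) has the crucial feature that all of its sign-indefinite pieces are pointwise-in-time bounded by $\|u_{x}(t)\|_{L^{2}}$ and $\|\sqrt{\rho f}(u-v)(t)\|_{\mathcal{L}^{2}}$, which are $L^{2}(0,\infty)$ by the basic energy estimate $(\ref{basiccnsv})_{4}$. Cauchy--Schwarz in time then gives these integrals growth of order $\sqrt{t-s}$, which is absorbed by the negative pressure term $-P_{0}(t-s)$ via Young's inequality (estimate (\ref{aa})). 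The paper also closes the argument not through a Zlotnik selection as you propose, but by an integrating-factor identity for $\rho^{-1}$ along characteristics (see (\ref{longtime})), which turns the inequality $\int_{s}^{t}A\,d\tau\leq -\tfrac{P_{0}}{2}(t-s)+C_{11}$ directly into a uniform upper bound on $\rho^{-1}$.
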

\begin{proof}
By (\ref{rhocnsv}), we have
\begin{equation}\label{shorttime1}
\begin{split}
\rho(x,t)\geq \frac{1}{C_{T_{0}}}>0,\quad (x,t)\in \mathbb{T}\times [0,T_{0}],
\end{split}
\end{equation}
where the time $T_{0}>0$ is given by Lemma \ref{lemma25}. 

We prove that the fluid density $\rho$ is uniformly bounded from below for $t\in(T_{0},T]$ in the case $\beta=0$, since the case $\beta>0$ can be dealt with in a similar way. It is easy to show
\begin{equation}\label{momentumfluid}
\begin{split}
&\frac{d}{dt}m_{1}(t)=\frac{1}{\|\rho_{0}\|_{L^1}}\frac{d}{dt}\int_{\mathbb{T}}\rho u(x,t) dx\\
&\quad\quad\quad~~=-\frac{1}{\|\rho_{0}\|_{L^1}}\int_{\mathbb{T}}[\rho n(u-w)](x,t)dx,
\end{split}
\end{equation}
where $m_{1}(t)$ is given by $(\ref{m1m2})_{1}$. By $(\ref{nw})$ and (\ref{momentumfluid}), the equation $(\ref{m1})_{2}$ for $\beta=0$ can be re-written as
\begin{equation}
\begin{split}
&(\rho(u-m_{1}))_{t}+(\rho u(u-m_{1}))_{x}+(\rho^{\gamma})_{x}\\
&\quad=2u_{xx}-\rho n(u-w)+\frac{\rho}{\|\rho_{0}\|_{L^1}}\int_{\mathbb{T}}[\rho n (u-w)](y,t)dy.\label{m21111}
\end{split}
\end{equation}
 Applying the operator $\mathcal{I}$ defined by (\ref{j}) to (\ref{m21111}), and re-writing the resulted equation along the particle path $\mathcal{X}^{x,t}(s)$ given by (\ref{overlinex}) for any $(x,t)\in\mathbb{T}\times[T_{0},T]$ and $s\in[T_{0},t]$, we obtain by (\ref{J})-(\ref{J1}) and (\ref{massrhox}) that
\begin{equation}\label{aaaaa}
\begin{split}
&\frac{d}{ds}\Big{(}\big{[}2\log{\rho}+\mathcal{I}(\rho(u-m_{1}))\big{]}(\mathcal{X}^{x,t}(s),s)+\int_{T_{0}}^{s} A(\mathcal{X}^{x,t}(\tau),\tau)d\tau\Big{)}\\
&\quad=-\rho^{\gamma}(\mathcal{X}^{x,t}(s),s),
\end{split}
\end{equation}
where $A(x,t)$ is given by
 \begin{equation}\label{a}
\begin{split}
 &A(x,t)=:-\int_{0}^{1}\big{[}\rho^{\gamma}+\rho u(u-m_{1})-\mu(\rho)u_{y}\big{]}(y,t)dy\\
 &\quad\quad\quad\quad~+\mathcal{I}\big{(}\rho n(u-w)\big{)}(x,t)-\frac{\mathcal{I}(\rho)(x,t)}{\|\rho_{0}\|_{L^1}}\int_{\mathbb{T}}[\rho n(u-w)](y,t)dy.
\end{split}
\end{equation}
Due to (\ref{basiccnsv}), (\ref{rhocnsv}), (\ref{J}) and (\ref{m1m2}), for any $(x,t)\in\mathbb{T}\times[T_{0},T]$, it holds
\begin{equation}\label{um1}
\begin{split}
&|\mathcal{I}(\rho(u-m_{1}))(x,t)|\\
&\quad\leq \sup_{t\in[0,T]}\|\big{(}\rho (u-m_{1})\big{)}(t)\|_{L^1}\\
&\quad\leq\sup_{t\in[0,T]}\big{(}\|\rho(t)\|_{L^1}^{\frac{1}{2}}\|(\sqrt{\rho}u)(t)\|_{L^2}+\frac{1}{\|\rho_{0}\|_{L^1}}\|\rho(t)\|_{L^1}^{\frac{3}{2}}\|(\sqrt{\rho}u)(t)\|_{L^2}\big{)}\\
&\quad\leq C_{10}=:2(2\|\rho_{0}\|_{L^1}E_{0})^{\frac{1}{2}}.
\end{split}
\end{equation}
By (\ref{basiccnsv}), (\ref{rhocnsv}), (\ref{J}),  (\ref{pressure}), (\ref{um1infty}) and (\ref{a}), for any $t\in[T_{0},T]$ and $s\in [T_{0},t]$, we have
\begin{align}
&\int_{s}^{t}A(\mathcal{X}^{x,t}(\tau),\tau)d\tau\nonumber\\
&\quad\leq -\int_{s}^{t}\int_{\mathbb{T}} \rho^{\gamma}(y,\tau)dyd\tau+\int_{s}^{t}\int_{\mathbb{T}}(\rho |u||u-m_{1}|+\mu(\rho)|u_{y}|)(y,\tau)dyd\tau\nonumber\\
&\quad\quad+2\int_{s}^{t}\int_{\mathbb{T}\times\mathbb{R}}(\rho|u-v|f)(y,v,\tau)dvdyd\tau\nonumber\\
&\quad\leq-P_{0}(t-s)+(t-s)^{\frac{1}{2}}\Big{(}\sup_{\tau\in [T_{0},T]}\big{(} \|\rho(\tau)\|_{L^1}^{\frac{1}{2}}\|(\sqrt{\rho} u)(\tau)\|_{L^2}\big{)}\|u-m_{1}\|_{L^2(T_{0},T;L^{\infty})}\nonumber\\
&\quad\quad+(\mu(\rho_{+}))^{\frac{1}{2}}\|\sqrt{\mu(\rho)}u_{x}\|_{L^2(T_{0},T;L^{2})}+2\rho_{+}^{\frac{1}{2}}\sup_{\tau\in [T_{0},T]}\|f(\tau)\|_{\mathcal{L}^1}^{\frac{1}{2}}\|\sqrt{\rho f}(u-v)\|_{L^2(T_{0},T;\mathcal{L}^2)}\Big{)}\nonumber\\
&\quad\leq -P_{0}(t-s)+(t-s)^{\frac{1}{2}}E_{0}^{\frac{1}{2}}\Big{(}(2\|\rho_{0}\|_{L^1}E_{0})^{\frac{1}{2}}+(\mu(\rho_{+}))^{\frac{1}{2}}+2(\rho_{+}\|f_{0}\|_{\mathcal{L}^1})^{\frac{1}{2}}\Big{)}\nonumber\\
&\quad\leq -\frac{P_{0}}{2}(t-s)+C_{11},\label{aa}
\end{align}
where the constant $C_{11}>0$ is given by
$$
C_{11}=:\frac{E_{0}}{2P_{0}}\Big{(}(2\|\rho_{0}\|_{L^1}E_{0})^{\frac{1}{2}}+(\mu(\rho_{+}))^{\frac{1}{2}}+2(\rho_{+}\|f_{0}\|_{\mathcal{L}^1})^{\frac{1}{2}}\Big{)}^2.
$$
Multiplying (\ref{aaaaa}) by
 $$
-\frac{1}{2}e^{-\frac{1}{2}[2\log{\rho}+\mathcal{I}(\rho(u-m_{1}))](\mathcal{X}^{x,t}(s),s)-\frac{1}{2}\int_{T_{0}}^{s}A(\mathcal{X}^{x,t}(\tau),\tau)d\tau},
$$
we obtain
\begin{equation}\begin{split}\nonumber
&\frac{d}{ds}\Big{(}\rho^{-1}(\mathcal{X}^{x,t}(s),s) e^{-\frac{1}{2}\mathcal{I}(\rho(u-m_{1}))(\mathcal{X}^{x,t}(s),s)-\frac{1}{2}\int_{T_{0}}^{s}A(\mathcal{X}^{x,t}(\tau),\tau)d\tau}\Big{)}\\
&=\frac{1}{2}\rho^{\gamma-1}(\mathcal{X}^{x,t}(s),s)e^{-\frac{1}{2}\mathcal{I}(\rho(u-m_{1}))(\mathcal{X}^{x,t}(s),s)-\frac{1}{2}\int_{T_{0}}^{s}A(\mathcal{X}^{x,t}(\tau),\tau)d\tau},
\end{split}\end{equation}
which together with (\ref{overlinex}) and (\ref{um1})-(\ref{aa}) implies for any $(x,t)\in\mathbb{T}\times [T_{0},T]$ that
\begin{equation}\begin{split}
&\rho^{-1}(x,t)\\
&\quad=\rho^{-1}(\mathcal{X}^{x,t}(T_{0}),T_{0})e^{\frac{1}{2}\mathcal{I}(\rho(u-m_{1}))(x,t)-\frac{1}{2}\mathcal{I}(\rho(u-m_{1}))(\mathcal{X}^{x,t}(T_{0}),T_{0})+\frac{1}{2}\int_{T_{0}}^{t}A(\mathcal{X}^{x,t}(\tau),\tau)d\tau} \\
&\quad\quad+\frac{1}{2}\int_{T_{0}}^{t}\rho^{\gamma-1}(\mathcal{X}^{x,t}(s),s)e^{\frac{1}{2}\mathcal{I}(\rho(u-m_{1}))(x,t)-\frac{1}{2}\mathcal{I}(\rho(u-m_{1}))(\mathcal{X}^{x,t}(s),s)+\frac{1}{2}\int_{s}^{t}A(\mathcal{X}^{x,t}(\tau),\tau)d\tau}ds\\
&\quad\leq \frac{1}{C_{T_{0}}}e^{C_{12}-\frac{P_{0}}{4}(t-T_{0})}+\frac{1}{2}\rho_{+}^{\gamma-1}e^{C_{12}}\int_{T_{0}}^{t}e^{-\frac{P_{0}}{4}(t-s)}ds,\quad C_{12}=:C_{10}+\frac{C_{11}}{2}. \label{longtime}
\end{split}\end{equation}
The combination of (\ref{shorttime1}) and (\ref{longtime}) leads to (\ref{lowerboundrho}). The proof of Lemma \ref{lemma26} is completed.
\end{proof}

\begin{lemma}\label{lemma27}
Let $T>0$, and $(\rho,u,f)$ be any regular solution to the IVP $(\ref{m1})$-$(\ref{kappa})$ for $t\in(0,T]$. Then, under the assumptions of Theorem \ref{theorem11}, it holds
\begin{equation}
\left\{
\begin{split}
&\sup_{t\in[0,T]}\|\rho_{x}(t)\|_{L^2}\leq C_{13},\\
&~{\rm{Supp}}_{v}f(x,\cdot,t)\subset \{v\in\mathbb{R}~\big{|}~|v|\leq R_{1}\},\quad (x,t)\in \mathbb{T}\times[0,T],\label{rhoh1uniform}
\end{split}
\right.
\end{equation}
where $C_{13}>0$ is a constant independent of time $T>0$, and the constant $R_{1}>0$ is denoted by
\begin{equation}\label{Rt}
\begin{split}
R_{1}=:R_{0}+\rho_{+}E_{0}^{\frac{1}{2}}\Big{(}\frac{1}{(2\rho_{-})^{\frac{1}{2}}}+\frac{2^{\frac{1}{2}}}{\rho_{-}\|\rho_{0}\|_{L^1}^{\frac{1}{2}}}\Big{)}.
\end{split}
\end{equation}
\end{lemma}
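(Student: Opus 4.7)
The plan is to exploit the newly available uniform lower bound $\rho\geq\rho_->0$ from Lemma \ref{lemma26} in two places: first, to upgrade the time-dependent $L^2$-bound of $\rho_x$ from Lemma \ref{lemma23} to a time-uniform one by activating genuine damping in the equation for the effective velocity $U=u+\mathcal{I}(n)+\rho^{-2}\mu(\rho)\rho_x$; second, to obtain an exponential decay kernel in the characteristic formula (\ref{xv1}), which together with the $L^2(\mathbb{R}_+)$-integrability of $\|u_x(t)\|_{L^2}$ inherited from the basic energy estimate will close the velocity-support bound with the explicit constant $R_1$ displayed in (\ref{Rt}). Conceptually the main obstruction is precisely the need for Lemma \ref{lemma26}: without a time-independent lower bound of $\rho$, neither the damping coefficient $\gamma\rho^{\gamma+1}\mu(\rho)^{-1}$ nor the exponential kernel $e^{-\rho_-(t-\tau)}$ would be robust enough to close the estimates uniformly in $T$.

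For $(\ref{rhoh1uniform})_1$, I would return to equation (\ref{bd}) for $U$. Since Lemmas \ref{lemma22} and \ref{lemma26} pin $\rho$ in $[\rho_-,\rho_+]$ uniformly in $T$, the coefficient $\gamma\rho^{\gamma+1}\mu(\rho)^{-1}$ is bounded below by a positive constant $\kappa_1$, so the zero-order term genuinely dissipates. Testing (\ref{bd}) against $U$ and integrating over $\mathbb{T}$, the source terms $\gamma\rho^{\gamma+1}\mu(\rho)^{-1}(u+\mathcal{I}(n))$ and $\rho\int_0^1 nw(y,t)\,dy$ both admit $L^2$-bounds uniform in $t$, using $\|\sqrt{\rho}u(t)\|_{L^2}^2\leq 2E_0$ (hence $\|u(t)\|_{L^2}^2\leq 2E_0/\rho_-$), the pointwise property $\|\mathcal{I}(n)\|_{L^{\infty}}\leq \|n\|_{L^1}=\|f_0\|_{\mathcal{L}^1}$, and $\big{|}\int_0^1 nw\,dy\big{|}\leq \|f_0\|_{\mathcal{L}^1}^{1/2}(2E_0)^{1/2}$. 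A Young's inequality absorbs the right-hand side into half of the damping and produces a differential inequality of the form
\begin{equation}\nonumber
\frac{d}{dt}\|\sqrt{\rho}U(t)\|_{L^2}^2+\kappa\|\sqrt{\rho}U(t)\|_{L^2}^2\leq C,
\end{equation}
with $\kappa,C>0$ independent of $T$, whose Gr\"onwall solution is uniformly bounded in $t$. Inverting the definition of $U$ in the form $\rho_x=\rho^{2}\mu(\rho)^{-1}(U-u-\mathcal{I}(n))$ and invoking the uniform bounds on $\rho$, $\|u\|_{L^2}$, and $\|\mathcal{I}(n)\|_{L^2}$ then yields $(\ref{rhoh1uniform})_1$.

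For $(\ref{rhoh1uniform})_2$, I would begin with the representation (\ref{xv1}) along the bi-characteristic $(X^{x,v,t},V^{x,v,t})$. Whenever $f(x,v,t)\neq 0$, one has $V^{x,v,t}(0)\in\mathrm{Supp}_v f_0\subset[-R_0,R_0]$, so combining $\rho\geq\rho_->0$ in the exponentials with $\rho\leq\rho_+$ in the amplitude yields
\begin{equation}\nonumber
|v|\leq R_0+\rho_+\int_0^t e^{-\rho_-(t-\tau)}|u(X^{x,v,t}(\tau),\tau)|\,d\tau.
\end{equation}
The pointwise estimate (\ref{uinfty1}) gives $|u(y,\tau)|\leq\|u_x(\tau)\|_{L^2}+(2E_0)^{1/2}/\|\rho_0\|_{L^1}^{1/2}$. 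The constant-in-$\tau$ part integrates explicitly against $e^{-\rho_-(t-\tau)}$ to produce $\rho_+(2E_0)^{1/2}/(\rho_-\|\rho_0\|_{L^1}^{1/2})$, while the $\|u_x(\tau)\|_{L^2}$-contribution is controlled by Cauchy--Schwarz combined with the global-in-time energy bound $\int_0^\infty\|u_x(t)\|_{L^2}^2\,dt\leq E_0$ (valid since $\mu(\rho)\geq 1$ in $(\ref{basiccnsv})_4$), producing $\rho_+(E_0/(2\rho_-))^{1/2}$. Summing the three contributions reproduces precisely the constant $R_1$ in (\ref{Rt}), completing the proof.
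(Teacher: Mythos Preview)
Your proposal is correct and follows essentially the same approach as the paper's own proof. The only cosmetic difference is in the handling of the energy inequality for $\|\sqrt{\rho}U\|_{L^2}$: you apply Young's inequality to the source term to obtain $\frac{d}{dt}\|\sqrt{\rho}U\|_{L^2}^2+\kappa\|\sqrt{\rho}U\|_{L^2}^2\leq C$, whereas the paper divides through by $(\|\sqrt{\rho}U\|_{L^2}^2+\varepsilon^2)^{1/2}$ to obtain a linear differential inequality before applying Gr\"onwall and letting $\varepsilon\to 0$; both variants are standard and equivalent here.
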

\begin{proof}
Multiplying (\ref{bd}) by $U$ given by (\ref{m}), integrating the resulted equation by parts over $\mathbb{T}$, and using (\ref{basiccnsv}), (\ref{rhocnsv}) and (\ref{lowerboundrho}), we have
\begin{equation}\label{the22}
\begin{split}
&\frac{1}{2}\frac{d}{dt}\|(\sqrt{\rho}U)(t)\|_{L^2}^2+\gamma \rho_{-}^{\gamma}\mu^{-1}(\rho_{+})\|(\sqrt{\rho}U)(t)\|_{L^2}^2\\
&~~\leq\big{(}\gamma\|\rho(t)\|_{L^{\infty}}^{\gamma}\|(\sqrt{\rho}u)(t)\|_{L^2}+\gamma\|\rho(t)\|_{L^{\infty}}^{\gamma}\|\rho(t)\|_{L^1}^{\frac{1}{2}}\|n(t)\|_{L^1}\\
&\quad~~+\|\rho(t)\|_{L^1}^{\frac{1}{2}}\|f(t)\|_{\mathcal{L}^1}^{\frac{1}{2}}\||v|^2f(t)\|_{\mathcal{L}^1}^{\frac{1}{2}}\big{)}\|(\sqrt{\rho}U)(t)\|_{L^2}\\
&~~\leq C_{14}\|(\sqrt{\rho}U)(t)\|_{L^2},
\end{split}
\end{equation}
where the constant $C_{14}>0$ is given by
$$
C_{14}=:\gamma\rho_{+}^{\gamma}(2E_{0})^{\frac{1}{2}}+\gamma\rho_{+}^{\gamma}\|\rho_{0}\|_{L^1}^{\frac{1}{2}}\|f_{0}\|_{\mathcal{L}^1}+(2\|\rho_{0}\|_{L^1}\|f_{0}\|_{\mathcal{L}^1}E_{0})^{\frac{1}{2}}.
$$
We divide (\ref{the22}) by $(\|(\sqrt{\rho}U)(t)\|_{L^2}^2+\varepsilon^2)^{\frac{1}{2}}$ for $\varepsilon>0$, employ the Gr$\rm{\ddot{o}}$nwall's inequality and then take the limit $\varepsilon\rightarrow 0$ to obtain
\begin{equation}\label{U2unifrom}
\begin{split}
&\sup_{t\in[0,T]}\|(\sqrt{\rho}U)(t)\|_{L^2}\leq e^{-\gamma \rho_{-}^{\gamma}\mu^{-1}(\rho_{+})t}\|(\sqrt{\rho}U)(0)\|_{L^2}+C_{14}\int_{0}^{T}e^{-\gamma \rho_{-}^{\gamma}\mu^{-1}(\rho_{+})s}ds.
\end{split}
\end{equation}
Due to (\ref{basiccnsv}), (\ref{rhocnsv}) and (\ref{m}), it holds
\begin{equation}\label{rhoxuniform}
\begin{split}
&\sup_{t\in[0,T]}\|\rho_{x}(t)\|_{L^2} \leq \rho_{+}^{\frac{3}{2}} \sup_{t\in[0,T]} \|\big{(}\sqrt{\rho}(U-u-\mathcal{I}(n))\big{)}(t)\|_{L^2}\\
&\quad\quad\quad\quad\quad\quad~~\leq \rho_{+}^{\frac{3}{2}} \sup_{t\in[0,T]}\big{(}\|(\sqrt{\rho}U)(t)\|_{L^2}+\|(\sqrt{\rho}u)(t)\|_{L^2}+\|\rho(t)\|_{L^1}^{\frac{1}{2}}\|n(t)\|_{L^1}\big{)}.
\end{split}
\end{equation}
By (\ref{basiccnsv}) and (\ref{U2unifrom})-(\ref{rhoxuniform}), we prove $(\ref{rhoh1uniform})_{1}$.

Moreover, one deduces by $(\ref{a1})_{2}$, (\ref{basiccnsv}), (\ref{rhocnsv}), (\ref{Sigma1}), (\ref{uinfty1}) and (\ref{lowerboundrho}) that
\begin{equation}
\begin{split}
&\sup_{v\in\Sigma(x,t)}|v|\leq e^{-\rho_{-}t}R_{0}+\rho_{+}\int_{0}^{t}e^{-\rho_{-}s}|u(x,s)|ds\\
&\quad\quad\quad\quad~\leq R_{0}+\rho_{+}\|u_{x}\|_{L^2(0,t;L^2)}\Big{(}\int_{0}^{t}e^{-2\rho_{-}s}ds\Big{)}^{\frac{1}{2}}\\
&\quad\quad\quad\quad~\quad+\frac{\rho_{+}}{\|\rho_{0}\|_{L^1}^{\frac{1}{2}}}\sup_{s\in[0,t]}\|(\sqrt{\rho}u)(s)\|_{L^2}\int_{0}^{t}e^{-\rho_{-}s}ds\\
&\quad\quad\quad\quad~\leq R_{1}=:R_{0}+\rho_{+}E_{0}^{\frac{1}{2}}\Big{(}\frac{1}{(2\rho_{-})^{\frac{1}{2}}}+\frac{2^{\frac{1}{2}}}{\rho_{-}\|\rho_{0}\|_{L^1}^{\frac{1}{2}}}\Big{)},\quad (x,t)\in\mathbb{T}\times[0,T],\label{compactv}
\end{split}
\end{equation}
where the initial energy $E_{0}$ is given by (\ref{E0}). By (\ref{Sigma}) and (\ref{compactv}), $(\ref{rhoh1uniform})_{2}$ holds. The proof of Lemma \ref{lemma27} is completed.
\end{proof}

\bigskip

\underline{\it\textbf{Proof of global existence in spatial periodic domain:}} \emph{Step 1: Construction of global strong solution.} Let the initial data $(\rho_{0},u_{0},f_{0})$ satisfy $(\ref{a1})$ and (\ref{a11}). By Lemma \ref{lemma41} in Appendix, we can obtain the local existence of a unique strong solution to the IVP $(\ref{m1})$-$(\ref{kappa})$.

In the terms of Lemmas \ref{lemma21}-\ref{lemma24} about the uniformly a-priori estimates, we can extend the local solution globally in time and prove that this solution satisfies $(\ref{r1})$-(\ref{r11}) and (\ref{r12}).

\emph{Step 2: Global existence of weak solution.} Assume that the initial data $(\rho_{0},u_{0},f_{0})$ satisfies $(\ref{a1})$. One can regularize the initial data as follows:
\begin{equation}\label{appd}
\begin{split}
(\rho_{0}^{\varepsilon}(x),u_{0}^{\varepsilon}(x),f_{0}^{\varepsilon}(x,v))=( J_{1}^{\varepsilon}\ast\rho_{0}(x),J_{1}^{\varepsilon}\ast u_{0}(x) ,J_{1}^{\varepsilon}\ast J_{2}^{\varepsilon}\ast f_{0}(x,v)),\quad \varepsilon\in(0,1),
\end{split}
\end{equation}
where $J_{1}^{\varepsilon}(x)$ and $J_{2}^{\varepsilon}(v)$ are the Friedrich's mollifiers with respect to variables $x$ and $v$ respectively. Then it is easy to verify
\begin{equation}\label{initialt1}
\left\{
\begin{split}
&\inf_{x\in\mathbb{T}}\rho_{0}^{\varepsilon}(x)\geq \inf_{x\in\mathbb{T}}\rho(x)>0,\quad \|\rho_{0}^{\varepsilon}\|_{W^{1,\infty}}\leq \|\rho_{0}\|_{W^{1,\infty}}, \quad \|u_{0}^{\varepsilon}\|_{H^1}\leq \|u_{0}\|_{H^1},\\
&f_{0}^{\varepsilon}\geq 0,\quad \|f_{0}^{\varepsilon}\|_{\mathcal{L}^{\infty}}\leq \|f_{0}\|_{\mathcal{L}^{\infty}},\quad\text{{\rm{Supp}}}_{v}f^{\varepsilon}_{0}(x,\cdot)\subset \{v\in\mathbb{R}~\big{|}~|v|\leq R_{0}+1\},~  x\in\mathbb{T}.
\end{split}
\right.
\end{equation}
In addition, as $\varepsilon\rightarrow 0$, it holds up to a subsequence (still denoted by $(\rho_{0}^{\varepsilon},u_{0}^{\varepsilon},f_{0}^{\varepsilon})$) that 
\begin{equation}\label{initialt}
\left\{
\begin{split}
&(\rho_{0}^{\varepsilon}, u_{0}^{\varepsilon}, f_{0}^{\varepsilon})\rightarrow (\rho_{0},u_{0},f_{0})\quad \text{in}\quad W^{1,p}(\mathbb{T})\times H^1(\mathbb{T})\times L^{p}(\mathbb{T}\times\mathbb{R}), \quad p\in[1,\infty),\\
&(\rho_{0}^{\varepsilon},f_{0}^{\varepsilon}) \overset{\ast}{\rightharpoonup} (\rho_{0},f_{0})\quad \text{in}\quad W^{1,\infty}(\mathbb{T})\times L^{\infty}(\mathbb{T}\times\mathbb{R}).
\end{split}
\right.
\end{equation}

Let $T>0$ be any given time. It follows from Step 1 that for the initial data $(\rho^{\varepsilon}_{0},u^{\varepsilon}_{0},f^{\varepsilon}_{0})$, the IVP (\ref{m1})-(\ref{kappa}) admits a unique strong solution $(\rho^{\varepsilon},u^{\varepsilon},f^{\varepsilon})$ on $\mathbb{T} \times\mathbb{R}\times[0,T]$.

 Then, by the a-priori estimates established in Lemmas \ref{lemma21}-\ref{lemma23} uniformly with respect to $\varepsilon\in(0,1)$ and the Aubin-Lions's lemma, there is a limit $(\rho,u,f)$ so that as $\varepsilon\rightarrow 0$, it holds up to a subsequence (still denoted by $(\rho^{\varepsilon},u^{\varepsilon}, f^{\varepsilon})$) that
\begin{equation}\label{weakc}
\left\{
\begin{split}
&(\rho^{\varepsilon}, f^{\varepsilon})\overset{\ast}{\rightharpoonup}(\rho,f) \quad\text{in}\quad L^{\infty}(0,T;W^{1,\infty}(\mathbb{T}))\times L^{\infty}(0,T; L^{\infty}(\mathbb{T}\times\mathbb{R})),\\
&u^{\varepsilon}\rightharpoonup u\quad\text{in}\quad L^2(0,T;H^2(\mathbb{T}))\cap H^1(0,T;L^2(\mathbb{T})),\\
&(\rho^{\varepsilon},u^{\varepsilon})\rightarrow (\rho,u)\quad\text{in}~C([0,T];C^{0}(\mathbb{T}))\times C([0,T];C^{0}(\mathbb{T})).
\end{split}
\right .
\end{equation}
By $(\ref{rhouh1cnsv})_{2}$, we have ${\rm{Supp}}_{v}f^{\varepsilon}(x,\cdot,t)\subset \{v\in\mathbb{R}~\big{|}~|v|\leq R_{T}+1\}$  for $(x,t)\in \mathbb{T}\times[0,T]$, and similarly to (\ref{fformula})-(\ref{compactv1}), one can show ${\rm{Supp}}_{v}f(x,\cdot,t)\subset \{v\in\mathbb{R}~\big{|}~|v|\leq R_{T}\}$ for $(x,t)\in \mathbb{T}\times[0,T]$. Therefore, it follows by $(\ref{weakc})_{1}$ that
\begin{equation}\label{weakcc}
\begin{split}
&\lim_{\varepsilon\rightarrow 0}\int_{0}^{T}\int_{\mathbb{T}}\varphi(x,t)\Big{(}\int_{\mathbb{R}} \phi(v) f^{\varepsilon}(x,v,t)dv-\int_{\mathbb{R}} \phi(v)f(x,v,t)dv\Big{)}dxdt\\
&\quad =\lim_{\varepsilon\rightarrow 0}\int_{0}^{T}\int_{\mathbb{T}\times\mathbb{R}} \varphi(x,t)\phi(v)\chi (v)(f^{\varepsilon}-f)(x,v,t)dvdxdt\\
&\quad=0,\quad\quad  \forall \varphi\in C^{\infty}_{0}(\mathbb{T}\times(0,T)),\quad\phi(v)=1, v,
\end{split}
\end{equation}
where $\chi(v)\in C_{0}^{\infty}(\mathbb{R})$ satisfies $\chi(v)=1$ for $|v|\leq R_{T}+1$ and $\chi(v)=0$ for $|v|>R_{T}+2$.

 Since both $\int_{\mathbb{R}} f^{\varepsilon}dv$ and $\int_{\mathbb{R}} vf^{\varepsilon}dv$ are uniformly bounded in $L^{\infty}(0,T;L^{\infty}(\mathbb{T}))$, as $\varepsilon\rightarrow 0$, it holds by (\ref{weakcc}) that
 \begin{equation}\label{ne}
 \left\{
 \begin{split}
& \int_{\mathbb{R}} f^{\varepsilon}dv\overset{\ast}{\rightharpoonup} n=:\int_{\mathbb{R}} fdv \quad\text{in}~L^{\infty}(0,T;L^{\infty}(\mathbb{T})),\\
 &\int_{\mathbb{R}} vf^{\varepsilon}dv\overset{\ast}{\rightharpoonup} nw=:\int_{\mathbb{R}} vfdv \quad\text{in}~L^{\infty}(0,T;L^{\infty}(\mathbb{T})).
 \end{split}
 \right.
 \end{equation}
In addition, it can be verified by (\ref{weakc}) and (\ref{ne}) that $(\rho,u,f)$ solves the equations $(\ref{m1})$ in the sense of distributions. Due to the Sobolev embedding $
L^2(0,T;H^2(\mathbb{T}))\cap H^1(0,T;L^2(\mathbb{T}))\hookrightarrow C([0,T];H^1(\mathbb{T}))$,
it holds
\begin{equation}\label{ch1}
\begin{split}
u\in C([0,T];H^1(\mathbb{T})).
\end{split}
\end{equation}
Employing the theory of renormalized solutions in \cite{diperna1}, we get
\begin{equation}\label{ch2}
\begin{split}
\rho\in C([0,T];H^1(\mathbb{T})),\quad f\in C([0,T];L^1(\mathbb{T}\times\mathbb{R})).
\end{split}
\end{equation}
Finally, it follows from Lemmas \ref{lemma21}-\ref{lemma23} and (\ref{initialt})-(\ref{ch2}) that the weak solution $(\rho,u,f)$ satisfies (\ref{basiccnsv}) and (\ref{rhocnsv}). Then repeating the same arguments as in Lemmas \ref{lemma25}-\ref{lemma27}, we can prove that the expected properties (\ref{r11}) hold for $(\rho,u,f)$.

\subsection{Spatial real line}

In this subsection, we establish the global existence for compressible Navier-Stokes-Vlasov equations (\ref{m1}) in spatial real line. $\Pi_{\gamma}(\rho|\widetilde{\rho})$ is the relative entropy associated to $\frac{\rho^{\gamma}}{\gamma-1}$ defined by
\begin{equation}\label{Pi}
\begin{split}
&\Pi_{\gamma}(\rho|\widetilde{\rho})=:\frac{\rho^{\gamma}}{\gamma-1}-\frac{\widetilde{\rho}^{\gamma}}{\gamma-1}-\frac{\gamma\widetilde{\rho}^{\gamma-1}}{\gamma-1}(\rho-\widetilde{\rho}).
\end{split}
\end{equation}
If it holds $0\leq \rho\leq c_{+}$ for some constant $c_{+}>0$, then as in  {\rm{\cite{lhl1}}}, one can show
\begin{equation}\label{rho2}
\begin{split}
\frac{1}{C_{\widetilde{\rho},c_{+}}}|\rho-\widetilde{\rho}|^2\leq \Pi_{\gamma}(\rho|\widetilde{\rho})\leq C_{\widetilde{\rho},c_{+}}|\rho-\widetilde{\rho}|^2,
\end{split}
\end{equation}
where the constant $C_{\widetilde{\rho},c_{+}}>1$ depends only on $\gamma, \widetilde{\rho}$ and $c_{+}$.

First, we have

\begin{lemma}\label{lemma28}
Let $T>0$, and $(\rho,u,f)$ be any regular solution to the IVP $(\ref{m1})$-$(\ref{kappa})$ for $t\in(0,T]$. Then, under the assumptions of Theorem \ref{theorem12}, it holds
\begin{equation}\label{basicR}
\left\{
\begin{split}
&\rho(x,t)\geq 0,\quad f(x,v,t)\geq 0, \quad (x,v,t)\in\mathbb{R}\times\mathbb{R}\times [0,T],\\
&\int_{\mathbb{R}\times\mathbb{R}} f(x,v,t)dvdx=\int_{\mathbb{R}} n(x,t) dx=\int_{\mathbb{R}\times\mathbb{R}}f_{0}(x,v)dvdx,\quad t\in[0,T],\\
&\sup_{t\in[0,T]}\Big{(}\int_{\mathbb{R}}\big{(}\frac{1}{2}\rho |u|^2+\Pi_{\gamma}(\rho|\widetilde{\rho})\big{)}(x,t)dx+\int_{\mathbb{R}\times\mathbb{R}}\frac{1}{2}|v|^2f(x,v,t)dvdx\Big{)}\\
&\quad+\int_{0}^{T}\int_{\mathbb{R}}(\mu(\rho)|u_{x}|^2)(x,t)dxdt+\int_{0}^{T}\int_{\mathbb{R}\times\mathbb{R}} (\rho|u-v|^2f)(x,v,t)dvdxdt\leq \widetilde{E}_{0},\\
&\|u\|_{L^1(0,T;L^{p})}\leq C_{15}(T+T^{\frac{1}{2}}),\quad p\in[2,\infty],
\end{split}
\right .
\end{equation}
where $\Pi_{\gamma}(\rho|\widetilde{\rho})$ is defined by $(\ref{Pi})$, the initial energy $\widetilde{E}_{0}$ is given by
$$
\widetilde{E}_{0}=:\int_{\mathbb{R}}\big{(}\frac{1}{2}\rho_{0} |u_{0}|^2+\Pi_{\gamma}(\rho_{0}|\widetilde{\rho})\big{)}(x)dx+\int_{\mathbb{R}\times\mathbb{R}}\frac{1}{2} |v|^2f_{0}(x,v)dvdx,
$$
and $C_{15}>0$ is a constant independent of $p$ and $T$.
\end{lemma}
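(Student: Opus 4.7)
The four items in \eqref{basicR} come from progressively stronger manipulations of the system. The first two are essentially structural. Non-negativity of $\rho$ and $f$ follows from the transport/maximum principle along the flow of $u$ and the bi-characteristics $(X^{x,v,t},V^{x,v,t})$ defined in \eqref{XV}, exactly as in Lemma~\ref{lemma21}; the only change from the periodic case is that integrability/decay at $|x|\to\infty$ replaces periodicity when justifying integration by parts. For $(\ref{basicR})_{2}$, I would integrate the Vlasov equation $(\ref{m1})_{3}$ over $\mathbb{R}\times\mathbb{R}$: the $(vf_{x})$-term vanishes because the regular solution decays in $x$, and the $(F_{d}f)_{v}$-term vanishes because $f$ has compact support in $v$ by (the analogue of) $(\ref{rhouh1cnsv})_{2}$. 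This gives $\frac{d}{dt}\|f(t)\|_{\mathcal L^{1}}=0$ and hence also conservation of the $x$-marginal $n$.

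For the energy identity $(\ref{basicR})_{3}$ I would use the relative-entropy formulation tailored to a constant reference density $\widetilde{\rho}$: multiply $(\ref{m1})_{1}$ by $\frac{\gamma}{\gamma-1}(\rho^{\gamma-1}-\widetilde{\rho}^{\gamma-1})$, multiply $(\ref{m1})_{2}$ by $u$, and add; the resulting time derivative is exactly $\frac{d}{dt}\int_{\mathbb{R}}\big(\tfrac12\rho u^{2}+\Pi_{\gamma}(\rho|\widetilde{\rho})\big)\,dx$ because the $\widetilde{\rho}^{\gamma-1}$-piece becomes a total derivative through the continuity equation. Then multiply $(\ref{m1})_{3}$ by $\tfrac12|v|^{2}$ and integrate as in \eqref{energyparticle}. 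When the fluid and particle energies are summed, the two drag contributions combine into the manifestly nonpositive dissipation $-\int_{\mathbb R\times\mathbb R}\rho|u-v|^{2}f\,dv\,dx$, just as in Lemma~\ref{lemma21}. The viscosity term yields $\int \mu(\rho)|u_{x}|^{2}\,dx$, and integration in time gives $(\ref{basicR})_{3}$ with initial data $\widetilde{E}_{0}$.

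The only genuinely new difficulty is $(\ref{basicR})_{4}$. On $\mathbb{T}$ Lemma~\ref{lemma23} obtained an $L^{\infty}$-in-$x$ bound on $u$ via the fluid-mass average through \eqref{uinfty1}; on $\mathbb{R}$ there is no such average. My plan is to exploit instead that $\Pi_{\gamma}(\rho|\widetilde{\rho})\gtrsim|\rho-\widetilde{\rho}|^{2}$ on the set where $\rho<\widetilde{\rho}/2$ (where $\Pi_{\gamma}(\rho|\widetilde{\rho})$ is bounded below by the positive constant $\Pi_{\gamma}(\widetilde{\rho}/2|\widetilde{\rho})$). Consequently, $\big|\{x:\rho(x,t)<\widetilde{\rho}/2\}\big|\le M$ uniformly in $t$, with $M=M(\widetilde{\rho},\gamma,\widetilde{E}_{0})$. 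On the complementary ``good'' set one has $\int u^{2}\le (2/\widetilde{\rho})\int\rho u^{2}\le (4/\widetilde{\rho})\widetilde{E}_{0}$. Because $\rho-\widetilde{\rho}$ is in $L^{2}$ and in $W^{1,\infty}$, the good set is unbounded; starting from a good point $y$ with $|u(y)|\le C(\widetilde{\rho},\widetilde{E}_{0})$ (obtained by mean-value on a good interval) and writing $u(x)=u(y)+\int_{y}^{x}u_{z}\,dz$, an application of Cauchy--Schwarz on segments of length at most $M$ across a bad component yields
\begin{equation}
\|u(t)\|_{L^{2}(\mathbb{R})}+\|u(t)\|_{L^{\infty}(\mathbb{R})}\le C(\widetilde{\rho},\widetilde{E}_{0})\bigl(1+\|u_{x}(t)\|_{L^{2}(\mathbb{R})}\bigr).\nonumber
\end{equation}
Integrating in time and using $\|u_{x}\|_{L^{2}(0,T;L^{2})}\le \widetilde{E}_{0}^{1/2}$ from $(\ref{basicR})_{3}$, Cauchy--Schwarz gives $\|u\|_{L^{1}(0,T;L^{2}\cap L^{\infty})}\le C_{15}(T+T^{1/2})$. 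The intermediate $p\in(2,\infty)$ then follows from Gagliardo--Nirenberg interpolation $\|u\|_{L^{p}}\le \|u\|_{L^{2}}^{2/p}\|u\|_{L^{\infty}}^{1-2/p}$.

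The main obstacle is the last paragraph: on $\mathbb{R}$ there is no built-in lower bound for $\int\rho\,dx$ to replace the periodic mean-value trick, and one must use the relative-entropy control $\rho\to\widetilde{\rho}$ at infinity quantitatively. The splitting into $\{\rho\ge\widetilde{\rho}/2\}$ and $\{\rho<\widetilde{\rho}/2\}$, together with the finite measure of the ``bad'' set extracted from $(\ref{basicR})_{3}$, is what allows the $L^{\infty}$- and $L^{2}$-control of $u$ at almost every time and hence the claimed time-integral bound.
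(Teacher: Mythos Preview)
Your treatment of $(\ref{basicR})_{1}$--$(\ref{basicR})_{3}$ matches the paper's: non-negativity by characteristics, conservation of $\|f\|_{\mathcal L^1}$ by integrating the Vlasov equation, and the relative-entropy energy identity obtained by testing $(\ref{m1})_{1}$ against $\frac{\gamma}{\gamma-1}(\rho^{\gamma-1}-\widetilde{\rho}^{\gamma-1})$, $(\ref{m1})_{2}$ against $u$, and $(\ref{m1})_{3}$ against $\tfrac12|v|^{2}$, then summing so that the drag terms combine into the negative dissipation.

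For $(\ref{basicR})_{4}$ your route is correct but genuinely different from the paper's. You argue geometrically: the set $\{\rho<\widetilde{\rho}/2\}$ has measure $\le M=M(\widetilde{\rho},\gamma,\widetilde E_0)$ because $\Pi_{\gamma}(\cdot|\widetilde{\rho})$ is bounded below by a positive constant there; on the good set $\int u^{2}\le (2/\widetilde{\rho})\int\rho u^{2}$; and then a localized mean-value plus the fundamental theorem of calculus produces $\|u\|_{L^{\infty}}+\|u\|_{L^{2}}\le C(1+\|u_{x}\|_{L^{2}})$. (Your phrasing needs a small tightening: for a uniform $L^{\infty}$ bound you should, for each $x$, choose the good point $y$ inside a fixed window such as $[x-M-1,x]$, whose good part has measure $\ge 1$, so that $|x-y|\le M+1$ and mean value gives $|u(y)|\le C$ independent of $x$.) The paper instead writes
\[
\widetilde{\rho}\int_{\mathbb R}|u|^{2}\,dx=\int_{\{\rho\le\widetilde{\rho}\}}(\widetilde{\rho}-\rho)|u|^{2}\,dx+\int_{\{\rho\ge\widetilde{\rho}\}}(\widetilde{\rho}-\rho)|u|^{2}\,dx+\int_{\mathbb R}\rho|u|^{2}\,dx,
\]
observes that the middle term is nonpositive, bounds the first by $\|(\widetilde{\rho}-\rho)\mathbf 1_{\{\rho\le\widetilde{\rho}\}}\|_{L^{2}}\|u\|_{L^{4}}^{2}\le C\|\Pi_{\gamma}(\rho|\widetilde{\rho})\|_{L^{1}}^{1/2}\|u\|_{L^{2}}^{3/2}\|u_{x}\|_{L^{2}}^{1/2}$ via \eqref{rho2} and Gagliardo--Nirenberg, and absorbs with Young to get $\|u\|_{L^{2}}\le C(\|\Pi_{\gamma}(\rho|\widetilde{\rho})\|_{L^{1}}\|u_{x}\|_{L^{2}}+\|\sqrt{\rho}\,u\|_{L^{2}})$. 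The paper's argument is purely functional-analytic and avoids any pointwise reasoning, while yours is more elementary and makes the mechanism (finite-measure deviation from $\widetilde{\rho}$) very transparent; both yield the same bound $\|u(t)\|_{L^{p}}\le C(1+\|u_{x}(t)\|_{L^{2}})$ and hence $(\ref{basicR})_{4}$ after time integration.
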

\begin{proof}
$(\ref{basicR})_{1}$-$(\ref{basicR})_{2}$ can be derived directly by $(\ref{m1})_{1}$ and $(\ref{m1})_{3}$. Similarly to (\ref{energyfluid})-(\ref{energyparticle}), we can obtain $(\ref{basicR})_{3}$. Then it is easy to verify
 \begin{equation}\label{411}
\begin{split}
&\widetilde{\rho}\int_{\mathbb{R}} |u(x,t)|^2dx\\
&\quad= \int_{\{\rho\leq \widetilde{\rho}\}}[(\widetilde{\rho}-\rho)|u|^2](x,t)dx+\int_{\{\rho\geq \widetilde{\rho}\}}[(\widetilde{\rho}-\rho)|u|^2](x,t)dx+\int_{\mathbb{R}} (\rho |u|^2)(x,t)dx.
\end{split}
\end{equation}
One deduces by (\ref{rho2}), the Gagliardo-Nirenberg's and Young's inequalities that
\begin{equation}\label{412}
\begin{split}
&\int_{\{\rho\leq \widetilde{\rho}\}}\big{[}(\widetilde{\rho}-\rho)|u|^2\big{]}(x,t)dx\\
&\quad\leq \Big{(}\int_{\{\rho\leq \widetilde{\rho}\}}|(\widetilde{\rho}-\rho)(x,t)|^2dx\Big{)}^{\frac{1}{2}}\|u(t)\|_{L^{4}}^2\\
&\quad\leq C\Big{(}\int_{\mathbb{R}} \Pi_{\gamma}(\rho|\widetilde{\rho})(x,t)dx\Big{)}^{\frac{1}{2}}\|u(t)\|_{L^2}^{\frac{3}{2}}\|u_{x}(t)\|_{L^2}^{\frac{1}{2}}\\
&\quad\leq \frac{\widetilde{\rho}}{2}\int_{\mathbb{R}}|u(x,t)|^2dx+C\Big{(}\int_{\mathbb{R}} \Pi_{\gamma}(\rho|\widetilde{\rho})(x,t)dx\Big{)}^{2}\int_{\mathbb{R}}|u_{x}(x,t)|^2dx.
\end{split}
\end{equation}
For the second term on the right-hand side of (\ref{411}), it holds
\begin{equation}\label{413}
\begin{split}
&\int_{\{\rho\geq \widetilde{\rho}\}}[(\widetilde{\rho}-\rho)|u|^2](x,t)dx\leq \int_{\{\rho\geq \widetilde{\rho}\}}(\widetilde{\rho}|u|^2)(x,t)dx\leq \int_{\mathbb{R}} (\rho|u|^2)(x,t)dx.
\end{split}
\end{equation}
Substituting (\ref{412})-(\ref{413}) into (\ref{411}), we obtain
$$
\|u(t)\|_{L^2}\leq C\big{(}\|\Pi_{\gamma}(\rho|\widetilde{\rho})(t)\|_{L^1}\|u_{x}(t)\|_{L^2}+\|(\sqrt{\rho}u)(t)\|_{L^2}\big{)},
$$
which together with $(\ref{basicR})_{3}$ and the fact
\begin{equation}\nonumber
\begin{split}
&\|u(t)\|_{L^{p}}\leq C\big{(}\|u(t)\|_{L^2}+\|u_{x}(t)\|_{L^2}\big{)},\quad p\in[2,\infty],
\end{split}
\end{equation}
 gives rise to $(\ref{basicR})_{4}$. The proof of Lemma \ref{lemma28} is completed.
 \end{proof}

\begin{lemma}\label{lemma29}
Let $T>0$, and $(\rho,u,f)$ be any regular solution to the IVP $(\ref{m1})$-$(\ref{kappa})$ for $t\in(0,T]$. Then, under the assumptions of Theorem \ref{theorem12}, it holds
\begin{equation}\label{upper1}
\left\{
\begin{split}
&\quad 0<\rho_{T}^{-1}\leq \rho(x,t)\leq \rho_{T},\quad (x,t)\in\mathbb{R}\times [0,T],\\
&\sup_{t\in[0,T]}\|(\rho-\overline{\rho})(t)\|_{L^2}\leq C_{T},
\end{split}
\right.
\end{equation}
where $\rho_{T}>0$ and $C_{T}>0$ are two constants dependent of time $T>0$.
\end{lemma}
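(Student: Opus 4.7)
The strategy is to reuse the two-effective-velocity machinery of Lemmas \ref{lemma22} and \ref{lemma27}, with the important relaxation that bounds may now depend on $T$. I would organize the argument into three steps: upper bound on $\rho$, then the $L^2$-bound on $\rho-\widetilde{\rho}$ as an immediate consequence, then lower bound on $\rho$.

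\textbf{Step 1 (upper bound).} On $\mathbb{R}$ I would introduce the primitive
\begin{equation*}
\mathcal{I}(g)(x)=\int_{-\infty}^{x} g(y)\,dy,
\end{equation*}
which is well-defined and bounded by $\|g\|_{L^1}$ for $g\in L^1(\mathbb{R})$. An iterative characteristic argument based on (\ref{XV}), paralleling (\ref{fformula})--(\ref{compactv1}) but with the $L^1_tL^\infty_x$ bound $(\ref{basicR})_4$ replacing (\ref{uinfty1}), shows that on $[0,T]$ the $v$-support of $f$ remains compact, so $n,\,nw\in L^\infty(0,T;L^1(\mathbb{R}))$. Integrating $(\ref{m1})_3$ in $v$ gives $n_t+(nw)_x=0$, hence $\partial_t\mathcal{I}(n)=-nw$; an identity analogous to (\ref{drag})--(\ref{newm1}) then yields the clean reformulation
\begin{equation*}
\bigl(\rho(u+\mathcal{I}(n))\bigr)_t+\bigl(\rho u(u+\mathcal{I}(n))\bigr)_x+(\rho^{\gamma})_x=(\mu(\rho)u_x)_x,
\end{equation*}
with no periodic-average remainder. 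Coupling this identity with (\ref{massrhox}) along the particle paths $\mathcal{X}^{x,t}(s)$ of (\ref{overlinex}) produces a Zlotnik-type ODE
\begin{equation*}
\frac{d}{ds}\bigl(\theta(\rho)+G\bigr)\bigl(\mathcal{X}^{x,t}(s),s\bigr)+\rho^{\gamma}\bigl(\mathcal{X}^{x,t}(s),s\bigr)=H\bigl(\mathcal{X}^{x,t}(s),s\bigr),
\end{equation*}
in which $G$ and $H$ are controlled in $L^\infty$ by $\widetilde{E}_0$ and $T$ via Lemma \ref{lemma28}. Applying Zlotnik's lemma exactly as in (\ref{cutcut})--(\ref{upperdd}) delivers the pointwise bound $\rho(x,t)\le \rho_T$.

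\textbf{Step 2 ($L^2$-bound).} With $\rho\le \rho_T$ in hand, (\ref{rho2}) with $c_+=\rho_T$ together with $(\ref{basicR})_3$ gives at once
\begin{equation*}
\|(\rho-\widetilde{\rho})(t)\|_{L^2}^2\le C_{\widetilde{\rho},\rho_T}\int_{\mathbb{R}}\Pi_\gamma(\rho|\widetilde{\rho})(x,t)\,dx\le C_{\widetilde{\rho},\rho_T}\widetilde{E}_0=C_T,
\end{equation*}
which is $(\ref{upper1})_2$.

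\textbf{Step 3 (lower bound).} I then introduce the second effective velocity $U=u+\mathcal{I}(n)+\rho^{-2}\mu(\rho)\rho_x$, for which the analogue of (\ref{bd}) on $\mathbb{R}$ is the damped transport equation
\begin{equation*}
\rho(U_t+uU_x)+\gamma\rho^{\gamma+1}\mu(\rho)^{-1}U=\gamma\rho^{\gamma+1}\mu(\rho)^{-1}(u+\mathcal{I}(n)).
\end{equation*}
$L^p$-energy estimates on $\sqrt{\rho}\,U$ as in (\ref{the221})--(\ref{Ulpf}), followed by passing $p\to\infty$, yield a $T$-dependent $L^\infty$-bound on $U$. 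Since $U-u-\mathcal{I}(n)=(\Phi(\rho))_x$ where $\Phi'(\rho)=\mu(\rho)/\rho^2$ (so that $\Phi(\rho)\to-\infty$ as $\rho\to 0^+$), integrating along characteristics and using the control of $u$ from $(\ref{basicR})_4$ bounds $\Phi(\rho)$, and therefore $\rho^{-1}$, pointwise in terms of $T$; this produces $\rho(x,t)\ge \rho_T^{-1}$.

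\textbf{Expected obstacle.} The main difficulty lies in making sense of the primitives $\mathcal{I}(\rho u+\rho\mathcal{I}(n))$ that enter the Zlotnik argument on $\mathbb{R}$: only $\rho-\widetilde{\rho}\in L^2$ and $\sqrt{\rho}\,u\in L^\infty_tL^2_x$ are available, so $\rho u$ is not directly in $L^1(\mathbb{R})$. The fix is the splitting $\rho=\widetilde{\rho}+(\rho-\widetilde{\rho})$, combined with the Gagliardo--Nirenberg bound already exploited in the derivation of $(\ref{basicR})_4$, which handles the cross terms at the price of $T$-dependent constants—consistent with the time-dependent character of the bounds we are proving.
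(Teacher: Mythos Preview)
Your Step 2 is correct and matches the paper. But Steps 1 and 3 both contain genuine gaps.

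\textbf{Step 1.} The obstacle you identify is fatal and your proposed fix does not repair it. The primitive $\mathcal{I}(\rho(u+\mathcal{I}(n)))=\int_{-\infty}^{x}\rho(u+\mathcal{I}(n))\,dy$ is the quantity $G$ that must be uniformly bounded for the Zlotnik argument to run. Splitting $\rho=\widetilde{\rho}+(\rho-\widetilde{\rho})$ produces a piece $\widetilde{\rho}\int_{-\infty}^{x}u\,dy$, and at this stage you only know $u(t)\in L^2(\mathbb{R})$ (not $L^1$), so this integral is simply not defined; Gagliardo--Nirenberg does not help here. Worse, the cross term $(\rho-\widetilde{\rho})u$ requires $\rho-\widetilde{\rho}\in L^2$, which via $(\ref{rho2})$ already presupposes the upper bound you are trying to prove---the argument is circular. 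The paper avoids all of this by \emph{localizing}: it introduces the unit-interval operator $\mathcal{I}_{x_0}$ on $(x_0,x_0+1)$, applies it to $(\ref{wholem})$, and derives the ODE for $\theta(\rho)$ along characteristics only on that interval. The key estimate making this work is that $\|u\|_{L^1(0,T;L^\infty)}\le C_{15}(T+T^{1/2})$ from $(\ref{basicR})_4$ bounds the displacement of $\mathcal{X}^{x,t}(s)$ from $x_0$, so the relevant local $L^1$ norms of $\rho$ are controlled by $\int_{\mathbb{R}}\Pi_\gamma(\rho|\widetilde{\rho})\,dx$ plus the (bounded) interval length, via the elementary inequality $\rho+\rho^\gamma\le C(\Pi_\gamma(\rho|\widetilde{\rho})+1)$. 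This localization is the essential new idea for the whole-line case.

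\textbf{Step 3.} Your route through the second effective velocity $U$ is not how the paper proceeds, and as written it does not produce a lower bound. An $L^\infty$ bound on $U$ yields only $\|(\Phi(\rho))_x\|_{L^\infty}<\infty$, which says nothing about $\inf\rho$ (a bounded derivative allows $\Phi(\rho)$ to be arbitrarily negative). The paper instead simply re-integrates the \emph{same} characteristic ODE obtained in Step~1: once $\rho\le\rho_T$ is known, the term $\rho^\gamma$ on the left is bounded above by $\rho_T^\gamma$, and integrating the identity over $[0,t]$ gives $\theta(\rho)(x,t)\ge\inf\theta(\rho_0)-C_T$, hence $\rho\ge\rho_T^{-1}$. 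No additional machinery is needed.
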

\begin{proof}
For any $x_{0}\in \mathbb{R}$, we claim that it holds
\begin{equation}
\begin{split}
\rho(x,t)\leq \rho_{T},\quad (x,t)\in  (x_{0},x_{0}+1)\times[0,T],\label{claimrho}
\end{split}
\end{equation}
where $\rho_{T}>0$ is a constant independent of the choice of $x_{0}$, so the upper bound in $(\ref{upper1})_{1}$ for any $(x,t)\in \mathbb{R}\times [0,T]$ can be obtained immediately. Indeed, integrating the equation $(\ref{m1})_{3}$ over $(-\infty, x]\times\mathbb{R}$ for any $x\in \mathbb{R}$, we have
\begin{equation}\nonumber
\begin{split}
(\int_{-\infty}^{x}n(y,t)dy)_{t}+nw=0,
\end{split}
\end{equation}
which together with $(\ref{m1})_{1}$ leads to
\begin{equation}\label{m111}
\begin{split}
&(\rho\int_{-\infty}^{x}n(y,t)dy)_{t}+(\rho u\int_{-\infty}^{x}n(y,t)dy)_{x}\\
&\quad=\rho(\int_{-\infty}^{x}n(y,t)dy)_{t}+\rho u (\int_{-\infty}^{x}n(y,t)dy)_{x}\\
&\quad=-\rho nw+\rho u n.
\end{split}
\end{equation}
Substituting (\ref{m111}) into momentum equation $(\ref{m1})_{2}$,  we get
\begin{equation}
\begin{split}
&\big{(}\rho( u+\int_{-\infty}^{x}n(y,t)dy)\big{)}_{t}+\big{(}\rho u( u+\int_{-\infty}^{x}n(y,t)dy)\big{)}_{x}+(\rho^{\gamma})_{x}=(\mu(\rho)u_{x})_{x}.\label{wholem}
\end{split}
\end{equation}
 Similarly to (\ref{j})-(\ref{newm4}), introduce the operator $\mathcal{I}_{x_{0}}: L^{1}(x_{0},x_{0}+1)\rightarrow W^{1,1}(x_{0},x_{0}+1)$ as
\begin{equation}\nonumber
\begin{split}
\mathcal{I}_{x_{0}}: \mathcal{I}_{x_{0}}(g)(x)=\int^{x}_{x_{0}}g(y)dy-\int_{x_{0}}^{x_{0}+1}\int_{x_{0}}^{y}g(z)dzdy,\quad\forall g\in L^1(x_{0},x_{0}+1),
\end{split}
\end{equation}
which satisfies for $g\in L^1(x_{0},x_{0}+1)$ that
\begin{align}
\sup_{x\in (x_{0},x_{0}+1)}\mathcal{I}_{x_{0}}(g)(x)\leq \|g\|_{L^1(x_{0},x_{0}+1)},\quad (\mathcal{I}_{x_{0}}(g)(x))_{x}=g(x),\label{Jx0}
\end{align}
and for $g\in W^{1,1}(x_{0},x_{0}+1)$ that
\begin{align}
\mathcal{I}_{x_{0}}(g_{x})(x)=g(x)-\int_{x_{0}}^{x_{0}+1}g(y)dy.\label{Jx01}
\end{align}
Applying $\mathcal{I}_{x_{0}}$ to the equation (\ref{wholem}), one can show by (\ref{Jx0})-(\ref{Jx01}) that
\begin{equation}\label{pppp}
\begin{split}
&\frac{d}{ds}\theta(\rho)(\mathcal{X}^{x,t}(s),s)+\rho^{\gamma}(\mathcal{X}^{x,t}(s),s)\\
&=-\frac{d}{ds}\mathcal{I}_{x_{0}}\big{(}\rho u+\rho\int_{-\infty}^{x}n(y,t)dy\big{)}(\mathcal{X}^{x,t}(s),s)+H(s)-\int_{x_{0}}^{x_{0}+1}(\mu(\rho)u_{y})(y,s)dx,
\end{split}
\end{equation}
where $\theta(\rho)$ and $\mathcal{X}^{x,t}(s)$ for any $(x,t)\in (x_{0},x_{0}+1)\times[0,T]$ are defined by (\ref{214}) and (\ref{overlinex}) respectively, and $H(s)$ is given by
$$
H(s)=:\int_{x_{0}}^{x_{0}+1}\Big{(}(\rho |u|^2)(y,s)+\rho(y,s)\int_{-\infty}^{y}n(z,s)dz+\rho^{\gamma}(y,s)\Big{)}dy.
$$
For $(x,t)\in (x_{0},x_{0}+1)\times[0,T]$ and $s\in[0,t]$, by virtue of (\ref{overlinex}) and $(\ref{basicR})_{4}$, we have
\begin{align}
&|\mathcal{X}^{x,t}(s)-x|\leq \int_{s}^{t}\|u(\tau)\|_{L^{\infty}}d\tau\leq C_{15}(T+T^{\frac{1}{2}}),\nonumber
\end{align}
which leads to
 \begin{align}
 &|\mathcal{X}^{x,t}(s)-x_{0}|\leq |\mathcal{X}^{x,t}(s)-x|+|x-x_{0}|\leq C_{15}(T+T^{\frac{1}{2}})+1.\label{Xx}
 \end{align}
In addition, it is easy to verify
  \begin{align}
\rho^{\gamma}+\rho\leq C_{16}\big{(}\Pi_{\gamma}(\rho|\widetilde{\rho})+1\big{)},\label{factx}
 \end{align}
 where $C_{16}>0$ is a constant depending only on $\gamma$ and $\widetilde{\rho}$. With the help of (\ref{basicR}), (\ref{Jx0}) and (\ref{Xx})-(\ref{factx}), we derive for any $s\in [0,t]$ that
  \begin{align}
 &\big{|}\mathcal{I}_{x_{0}}\big{(}\rho u+\rho \int_{-\infty}^{x}n(y,t)dy\big{)}(\mathcal{X}^{x,t}(s),s)\big{|}\nonumber\\
 &\quad\leq \Big{(}\int_{x_{0}}^{\mathcal{X}^{x,t}(s)}\rho(y,s)dy\Big{)}^{\frac{1}{2}}\Big{(}\int_{\mathbb{R}}(\rho |u|^2)(y,s)dy\Big{)}^{\frac{1}{2}}+\int_{x_{0}}^{\mathcal{X}^{x,t}(s)}\rho(y,s)dy\int_{\mathbb{R}} n(y,s)dy\nonumber\\
 &\quad\leq (2C_{16}E_{0})^{\frac{1}{2}}\Big{(}\int_{\mathbb{R}}\Pi(\rho|\widetilde{\rho})(y,s)dy+|\mathcal{X}^{x,t}(s)-X_{0}|\Big{)}^{\frac{1}{2}}\nonumber\\
 &\quad\quad+C_{16}\|f_{0}\|_{\mathcal{L}^1}\Big{(} \int_{\mathbb{R}}\Pi(\rho|\widetilde{\rho})(y,s)dy+|\mathcal{X}^{x,t}(s)-X_{0}| \Big{)}\nonumber\\
 &\quad\leq C_{17}(1+T),\label{Jx011}
\end{align}
and
  \begin{equation}\label{H}
 \begin{split}
 &|H(s)|\leq \int_{\mathbb{R}} (\rho |u|^2)(y,s)dy+C_{16}\big{(}\|n(s)\|_{L^1}+1\big{)}\Big{(}\int_{\mathbb{R}} \Pi_{\gamma}(\rho|\widetilde{\rho})(y,s)ds+1\Big{)}\leq C_{18},
\end{split}
\end{equation}
where $C_{17}>0$ and $C_{18}>0$ are two constants independent of time $T>0$. By (\ref{mutheta}), one has
\begin{align}
 &\Big{|}\int_{x_{0}}^{x_{0}+1}(\mu(\rho)u_{y})(y,t)dy\Big{|}\nonumber\\
&\quad\leq 1+ \frac{1}{4}\Big{(}{\footnotesize{\int_{\{y\in (x_{0},x_{0}+1)|\rho\leq 1\}}+\int_{\{y\in (x_{0},x_{0}+1)|\rho\geq 1\}}}}\Big{)}\mu(\rho)(y,s)dy\int_{\mathbb{R}}(\mu(\rho)|u_{y}|^2)(y,t)dy\nonumber\\
&\quad\leq 1+\int_{\mathbb{R}}(\mu(\rho)|u_{y}|^2)(y,t)dy+\frac{\beta}{4}{\footnotesize{\sup_{x\in\{y\in (x_{0},x_{0}+1)|\rho\geq 1\}}\theta(\rho)(x,t)\int_{\mathbb{R}}(\mu(\rho)|u_{y}|^2)(y,t)dy}}.\label{ux1}
\end{align}

Then, integrating the equality (\ref{pppp}) over $[0,t]$, we obtain by (\ref{basicR}) and (\ref{Jx011})-(\ref{ux1}) for any $(x,t)\in(x_{0},x_{0}+1)\times[0,T]$ that
\begin{equation}\nonumber
\begin{split}
&\theta(\rho)(x,t)\leq \theta(\rho_{0})(\mathcal{X}^{x,t}(0))+\big{|} \mathcal{I}_{x_{0}}\big{(}\rho u+\rho\int_{-\infty}^{x}n(y,t)dy\big{)}(\mathcal{X}^{x,t}(s),s)|^{s=t}_{s=0}\big{|}\\
&\quad\quad\quad\quad\quad+\int_{0}^{t}|H(s)|ds+\Big{|}\int_{0}^{t}\int_{x_{0}}^{x_{0}+1}(\mu(\rho)u_{y})(y,s)dyds\Big{|}\\
&\quad\quad\quad\quad\leq \sup_{x\in \mathbb{R}}\theta(\rho_{0})(x)+2C_{17}(1+T)+C_{18}T+T+\widetilde{E}_{0}\\
&\quad\quad\quad\quad\quad+\frac{\beta}{4}\int_{0}^{T}\sup_{x\in\{y\in (x_{0},x_{0}+1)|\rho\geq 1\}}\theta(\rho)(x,s)\int_{\mathbb{R}}(\mu(\rho)|u_{y}|^2)(y,s)dyds,
\end{split}
\end{equation}
which together with $(\ref{basicR})_{3}$ and the Gr${\rm{\ddot{o}}}$nwall's inequality yields (\ref{claimrho}). By (\ref{rho2}), $(\ref{basicR})_{3}$ and (\ref{claimrho}), we have
$$
\sup_{t\in[0,T]}\int_{\mathbb{R}} |\rho-\widetilde{\rho}|^2(x,t)dx\leq C_{T}\sup_{t\in[0,T]}\int_{\mathbb{R}} \Pi_{\gamma}(\rho|\widetilde{\rho})(x,t)dx\leq C_{T}.
$$

Similarly, integrating (\ref{pppp}) over $[0,t]$, one deduce by (\ref{basicR}), (\ref{claimrho}) and (\ref{Jx011})-(\ref{ux1}) for any $(x,t)\in (x_{0},x_{0}+1)\times[0,T]$ that
\begin{equation}
\begin{split}
&\theta(\rho)(x,t)\geq  \inf_{x\in\mathbb{R}}\theta(\rho_{0})(x)-T\rho_{T}^{\gamma}-\big{|} \big{(}\mathcal{I}_{x_{0}}\big{(}\rho u+\rho\int_{-\infty}^{x}n(y,t)dy\big{)}\big{)}(\mathcal{X}^{x,t}(s),s)|^{s=t}_{s=0} \big{|}\\
&\quad\quad\quad\quad\quad-\int_{0}^{T}|H(s)|ds-(\mu(\rho_{T})T)^{\frac{1}{2}}\|\sqrt{\mu(\rho)}u_{x}\|_{L^2(0,T;L^2)} \\
&\quad\quad\quad\quad\geq \inf_{x\in \mathbb{R}}\theta(\rho_{0})(x)-T\rho_{T}^{\gamma}-2C_{17}(1+T)-C_{18}T-(\mu(\rho_{T})T\widetilde{E}_{0})^{\frac{1}{2}}.\label{lowppp}
\end{split}
\end{equation}
  The combination of (\ref{logrho}) and (\ref{lowppp}) shows the lower bound of $\rho$. The proof of Lemma \ref{lemma29} is completed.
\end{proof}

\begin{lemma}\label{lemma210}
Let $T>0$, and $(\rho,u,f)$ be any regular solution to the IVP $(\ref{m1})$-$(\ref{kappa})$ for $t\in(0,T]$. Then, under the assumptions of Theorem \ref{theorem12}, it holds
\begin{equation}\label{H1R}
\left\{
\begin{split}
&\sup_{t\in[0,T]}\|f(t)\|_{\mathcal{L}^{\infty}}\leq e^{\rho_{T}T}\|f_{0}\|_{\mathcal{L}^{\infty}},\\
&~{\rm{Supp}}_{v}f(x,\cdot,t)\subset \{v\in\mathbb{R}~\big{|}~|v|\leq \widetilde{R}_{T}\},\\
&\sup_{t\in[0,T]}\big{(}\|n(t)\|_{L^{\infty}\cap L^2}+\|\rho_{x}(t)\|_{L^2\cap L^{\infty}}+\|u_{x}(t)\|_{L^2}\big{)}+\|(u_{xx},u_{t})\|_{L^2(0,T;L^2)}\leq C_{T},
\end{split}
\right .
\end{equation}
where the constant $\rho_{T}>0$ is given by $(\ref{upper1})_{1}$, and $\widetilde{R}_{T}>0$ and $C_{T}>0$ are two constants.
\end{lemma}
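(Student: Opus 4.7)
The plan is to parallel the proof of Lemma \ref{lemma23}, making the modifications required by the unbounded spatial domain. First, the bi-characteristic curves $(X^{x,v,t}(s),V^{x,v,t}(s))$ defined exactly as in (\ref{XV}) still give the representation formula (\ref{fformula}) because the transport structure of $(\ref{m1})_{3}$ is unchanged. Combining (\ref{fformula}) with the upper bound $\rho\le \rho_{T}$ from Lemma \ref{lemma29} yields the $L^{\infty}$ estimate $(\ref{H1R})_{1}$. For the velocity support, I would integrate $(\ref{XV})_{3}$ on $[0,s]$ to obtain (\ref{xv1}); since $\rho\ge 0$, the bound
\begin{equation*}
\sup_{v\in \Sigma(x,t)}|v|\le \widetilde{R}_{0}+\rho_{T}\int_{0}^{t}|u(x,s)|\,ds
\end{equation*}
holds, and by $(\ref{basicR})_{4}$ applied with $p=\infty$, the right-hand side is controlled by $\widetilde{R}_{T}=\widetilde{R}_{0}+\rho_{T}C_{15}(T+T^{1/2})$, giving $(\ref{H1R})_{2}$.

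Next, to bound $n$ in $L^{\infty}\cap L^{2}$, I combine $(\ref{H1R})_{1}$--$(\ref{H1R})_{2}$ with $(\ref{basicR})_{2}$:
\begin{equation*}
\|n(t)\|_{L^{\infty}}\le 2\widetilde{R}_{T}\|f(t)\|_{\mathcal{L}^{\infty}},\qquad \|n(t)\|_{L^{1}}=\|f_{0}\|_{\mathcal{L}^{1}},
\end{equation*}
and interpolate to obtain $\|n(t)\|_{L^{2}}\le C_{T}$. For $\rho_{x}$, I would introduce the whole-line effective velocity
\begin{equation*}
U\;=\;u+\int_{-\infty}^{x}n(y,t)\,dy+\rho^{-2}\mu(\rho)\rho_{x},
\end{equation*}
exactly mirroring (\ref{m}) but with the primitive $\int_{-\infty}^{x}n\,dy$ (which is uniformly bounded by $\|f_{0}\|_{\mathcal{L}^{1}}$) replacing $\mathcal{I}(n)$. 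Using (\ref{m111}) and (\ref{wholem}) together with $(\ref{bdx})$, the equation for $U$ takes the form
\begin{equation*}
\rho(U_{t}+uU_{x})+\gamma\rho^{\gamma+1}\mu(\rho)^{-1}U=\gamma\rho^{\gamma+1}\mu(\rho)^{-1}\Bigl(u+\int_{-\infty}^{x}n\,dy\Bigr),
\end{equation*}
which is a damped transport equation for $U$. Testing with $p|U|^{p-2}U$, integrating over $\mathbb{R}$ and using the two-sided bound on $\rho$ from $(\ref{upper1})_{1}$ together with $(\ref{basicR})_{4}$, I obtain an inequality of the form (\ref{ddtU1}); the Grönwall argument and the limit $p\to\infty$ then yield $\|U(t)\|_{L^{2}\cap L^{\infty}}\le C_{T}$. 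Choosing $p=2$ and solving for $\rho_{x}$ in the definition of $U$ gives $\|\rho_{x}(t)\|_{L^{2}}\le C_{T}$.

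Now I turn to the velocity estimates. Dividing $(\ref{m1})_{2}$ by $\rho$ produces (\ref{unorho}) with the same source
\begin{equation*}
G=nw-nu-\gamma\rho^{\gamma-2}\rho_{x}-uu_{x}+\beta\rho^{\beta-2}\rho_{x}u_{x}.
\end{equation*}
Using the already established bounds on $\rho^{\pm1}$, $\|\rho_{x}\|_{L^{\infty}_{t}L^{2}}$, $\|n\|_{L^{\infty}_{t}L^{2}\cap L^{\infty}}$ together with $\|\sqrt{f}(u-v)\|_{L^{2}_{t}\mathcal{L}^{2}}\in L^{2}$ from $(\ref{basicR})_{3}$ and the Gagliardo–Nirenberg inequality on $\mathbb{R}$, the estimate (\ref{GL2}) is reproduced with $C_{T}$-dependent constants. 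Testing (\ref{unorho}) against $-u_{xx}$ and integrating over $\mathbb{R}\times[0,t]$ gives $\sup_{[0,T]}\|u_{x}(t)\|_{L^{2}}+\|u_{xx}\|_{L^{2}(0,T;L^{2})}\le C_{T}$, and then $\|u_{t}\|_{L^{2}(0,T;L^{2})}\le C_{T}$ follows from reading $u_{t}=\mu(\rho)\rho^{-1}u_{xx}+G$. Finally, $\|\rho_{x}(t)\|_{L^{\infty}}\le C_{T}$ is recovered from the definition of $U$, the Sobolev embedding $H^{1}(\mathbb{R})\hookrightarrow L^{\infty}(\mathbb{R})$, and the $L^{\infty}$ bound on $U$.

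The step I expect to be the main technical obstacle is the derivation of the $U$-equation on the real line, because the simple averaging identities (\ref{J})--(\ref{J1}) are no longer available and the naive primitive $\int_{-\infty}^{x}n\,dy$ must be shown to remain uniformly bounded and to preserve the cancellation structure that killed the drag term in (\ref{drag}). Once this is carried out, the rest reduces to the same maximum-principle and energy arguments as in the periodic case.
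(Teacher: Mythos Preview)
Your treatment of $(\ref{H1R})_{1}$, $(\ref{H1R})_{2}$ and the $n$-bounds is fine and matches the paper. The genuine gap is in your choice of effective velocity. Your $U=u+\int_{-\infty}^{x}n\,dy+\rho^{-2}\mu(\rho)\rho_{x}$ does satisfy the clean equation you wrote down, but it is \emph{not} in $L^{p}(\mathbb{R})$ for any $p<\infty$: the primitive $\int_{-\infty}^{x}n(y,t)\,dy$ increases from $0$ to $\|f_{0}\|_{\mathcal{L}^{1}}>0$ as $x$ goes from $-\infty$ to $+\infty$, and since $\rho\ge\rho_{T}^{-1}>0$ nothing cancels this. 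Hence $\|\rho^{1/p}U\|_{L^{p}}=\infty$, the differential inequality of type (\ref{ddtU1}) is vacuous, and in particular your route to $\|\rho_{x}\|_{L^{2}}$ fails. You correctly flagged the loss of the averaging identities (\ref{J})--(\ref{J1}) as the main obstacle, but the boundedness of the primitive does not suffice---its lack of decay is fatal for the $L^{p}$ energy method.

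The paper sidesteps this by \emph{not} absorbing the drag term at all: it uses the Bresch--Desjardins velocity $W=u+\rho^{-2}\mu(\rho)\rho_{x}$, for which the momentum equation becomes
\[
\rho(W_{t}+uW_{x})+\gamma\rho^{\gamma+1}\mu(\rho)^{-1}W=\gamma\rho^{\gamma+1}\mu(\rho)^{-1}u+\rho(nw-nu).
\]
Now $W\in L^{p}(\mathbb{R})$ (since $u$ and $\rho_{x}$ decay), and the price---the extra forcing $\rho(nw-nu)$---is harmless because $n,\,nw\in L^{p}(\mathbb{R})$ for every $p\in(1,\infty]$ by interpolation between $L^{1}$ (mass conservation) and $L^{\infty}$ (compact $v$-support). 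Testing with $p|W|^{p-2}W$ then yields the uniform $L^{p}$ and, by $p\to\infty$, $L^{\infty}$ bounds on $W$, from which $\|\rho_{x}\|_{L^{2}\cap L^{\infty}}$ follows. After that, your argument for $u_{x},u_{xx},u_{t}$ goes through unchanged.
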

\begin{proof}
In accordance to (\ref{fformula}) and  $(\ref{upper1})_{1}$, $(\ref{H1R})_{1}$ holds. Let $\Sigma(x,t)$ be given by $(\ref{Sigmadef})$ for $(x,t)\in\mathbb{R}\times[0,T]$. Similarly to the proof of Lemma \ref{lemma23}, one deduces by $(\ref{a2})_{2}$, (\ref{fformula}), $(\ref{basicR})_{4}$ and $ (\ref{upper1})_{1}$ that
\begin{equation}\nonumber
\begin{split}
&\sup_{v\in\Sigma(x,t)}|v|\leq \widetilde{R}_{0}+\rho_{T}\int_{0}^{t}\|u(s)\|_{L^{\infty}}ds\\
&\quad\quad\quad\quad~\leq \widetilde{R}_{T}=:\widetilde{R}_{0}+C_{15}\rho_{T}(T+T^{\frac{1}{2}}),
\end{split}
\end{equation}
which implies $(\ref{H1R})_{2}$. By (\ref{nw}) and $(\ref{H1R})_{1}$-$(\ref{H1R})_{2}$, we have
\begin{equation}\label{H1R21}
\left\{
\begin{split}
&\sup_{t\in[0,T]}\|n(t)\|_{L^{\infty}}\leq 2\widetilde{R}_{T} \|f(t)\|_{\mathcal{L}^{\infty}}\leq 2\widetilde{R}_{T}e^{\rho_{T}T}\|f_{0}\|_{\mathcal{L}^{\infty}},\\
&\sup_{t\in[0,T]}\|nw(t)\|_{L^{\infty}}\leq 2\widetilde{R}^2_{T} \|f(t)\|_{\mathcal{L}^{\infty}}\leq 2\widetilde{R}_{T}^2e^{\rho_{T}T}\|f_{0}\|_{\mathcal{L}^{\infty}}.
\end{split}
\right.
\end{equation}
In addition, it follows from (\ref{basicR}), (\ref{H1R21}) and the Young's inequality for any $p\in (1,\infty)$ that
\begin{equation}\label{H1R2}
\begin{split}
&\sup_{t\in[0,T]}\|n(t)\|_{L^{p}}\leq  \sup_{t\in[0,T]}\big{(}\|n(t)\|_{L^{1}}+\|n(t)\|_{L^{\infty}}\big{)}\\
&\quad\quad\quad\quad\quad~~~~\leq\|f_{0}\|_{\mathcal{L}^1}+2\widetilde{R}_{T}e^{\rho_{T}T}\|f_{0}\|_{\mathcal{L}^{\infty}},
\end{split}
\end{equation}
and similarly,
\begin{equation}\label{H1R211}
\begin{split}
&\sup_{t\in[0,T]}\|nw(t)\|_{L^{p}}\leq \sup_{t\in[0,T]}\big{(}\|f(t)\|_{\mathcal{L}^{1}}^{\frac{1}{2}}\||v|^2f(t)\|_{\mathcal{L}^1}+\|nw(t)\|_{L^{\infty}}\big{)}\\
&\quad\quad\quad\quad\quad\quad\quad\leq \|f_{0}\|_{\mathcal{L}^1}^{\frac{1}{2}}(2\widetilde{E}_{0})^{\frac{1}{2}}+2\widetilde{R}_{T}e^{\rho_{T}T}\|f_{0}\|_{\mathcal{L}^{\infty}}.
\end{split}
\end{equation}

Then, we turn to show $(\ref{H1R})_{3}$. Define the following effective velocity of Bresch-Desjardins type \cite{bresch1,bresch2,haspot1}:
\begin{align}
&W=:u+\rho^{-2}\mu(\rho)\rho_{x}.\label{WBD}
 \end{align}
 Making use of (\ref{bdx}) and (\ref{WBD}), we can re-write the equation $(\ref{m1})_{2}$ as
\begin{equation}\label{M}
\begin{split}
&\rho (W_{t}+ uW_{x})+\gamma\rho^{\gamma+1}\mu(\rho)^{-1}W=\gamma\rho^{\gamma+1}\mu(\rho)^{-1}u+\rho(nw-nu).
\end{split}
\end{equation}
Multiplying (\ref{M}) by $p|W|^{p-2}W$ for any $p\in[2,\infty)$, and integrating the resulted equation by parts over $\mathbb{R}$, we derive by $(\ref{upper1})_{1}$ and (\ref{H1R2})-(\ref{H1R211}) that
\begin{align}
&\frac{d}{dt}\|(\rho^{\frac{1}{p}} W)(t)\|_{L^{p}}^{p}+p\gamma\int_{\mathbb{R}}\big{(}\rho^{\gamma+1}\mu(\rho)^{-1}|W|^{p}\big{)}(x,t)dx\nonumber\\
&=p\int_{\mathbb{R}} \big{[}\big{(}\gamma\rho^{\gamma+1}\mu(\rho)^{-1}u+\rho(nw-nu)\big{)}|W|^{p-2}W\big{]}(x,t)dx\nonumber\\
&\leq p\rho_{T}^{\frac{1}{p}}\big{(}\gamma\rho_{T}^{\gamma}\|u(t)\|_{L^{p}}+\|nw(t)\|_{L^{p}}+\|n(t)\|_{L^{p}}\|u(t)\|_{L^{\infty}}\big{)}\|(\rho^{\frac{1}{p}} W)(t)\|_{L^{p}}^{p-1}\nonumber\\
&\leq pC_{T}(1+\|u(t)\|_{L^{\infty}\cap L^{p}})\|(\rho^{\frac{1}{p}} W)(t)\|_{L^{p}}^{p-1},\label{www}
\end{align}
where $C_{T}>0$ is a constant independent of $p\in[2,\infty)$. Dividing (\ref{www}) by $(\|(\rho^{\frac{1}{p}}W)(t)\|_{L^{p}}^{p}+\varepsilon^{p})^{\frac{p-1}{p}}$, integrating the resulted inequality over $[0,t]$, and then taking the limit $\varepsilon\rightarrow0$, we have
\begin{equation}\label{Mlp}
\begin{split}
&\sup_{t\in[0,T]}\|(\rho^{\frac{1}{p}}W)(t)\|_{L^{p}}\\
&\quad\leq\|\rho_{0}^{\frac{1}{p}}(u_{0}+\rho_{0}^{-2}\mu(\rho_{0})(\rho_{0})_{x})\|_{L^{p}}+ C_{T}+C_{T}\int_{0}^{T}\|u(t)\|_{L^{\infty}\cap L^{p}}dt.
\end{split}
\end{equation}
Due to $(\ref{basicR})_{4}$, $(\ref{upper1})_{1}$, (\ref{Mlp}) and the fact
\begin{equation}\nonumber
\begin{split}
&\|\rho_{0}^{\frac{1}{p}}(u_{0}+\rho_{0}^{-2}\mu(\rho_{0})(\rho_{0})_{x})\|_{L^{p}}\\
&\quad\leq \|\rho_{0}\|_{L^{\infty}}^{\frac{1}{p}}\big{(}\|u_{0}\|_{L^2}^{\frac{2}{p}}\|u_{0}\|_{L^{\infty}}^{1-\frac{2}{p}}+\|\rho_{0}^{-2}\mu(\rho_{0})(\rho_{0})_{x}\|_{L^2}^{\frac{2}{p}}\|\rho_{0}^{-2}\mu(\rho_{0})(\rho_{0})_{x}\|_{L^{\infty}}^{1-\frac{2}{p}}\big{)}\\
&\quad\leq \big{(}\|\rho_{0}\|_{L^{\infty}}+1\big{)}^{\frac{1}{2}}\big{(}\|u_{0}\|_{L^2}+\|u_{0}\|_{L^{\infty}}+\|\rho_{0}^{-2}\mu(\rho_{0})(\rho_{0})_{x}\|_{L^2}+\|\rho_{0}^{-2}\mu(\rho_{0})(\rho_{0})_{x}\|_{L^{\infty}}\big{)},
\end{split}
\end{equation}
it follows
\begin{equation}\label{Mlp1}
\begin{split}
&\sup_{t\in[0,T]}\|W(t)\|_{L^{p}}\leq \widetilde{C}_{T},
\end{split}
\end{equation}
where $\widetilde{C}_{T}>0$ is a sufficiently large constant independent of $p\in[2,\infty)$. One gets after taking the limit $p\rightarrow\infty$ in (\ref{Mlp1}) that
\begin{equation}\label{Mlinfty}
\begin{split}
&\sup_{t\in[0,T]}\|W(t)\|_{L^{\infty}}\leq \widetilde{C}_{T}.
\end{split}
\end{equation}
Choosing $p=2$ in (\ref{Mlp1}), we have by $(\ref{upper1})_{1}$ and (\ref{WBD}) that
\begin{equation}\nonumber
\begin{split}
&\sup_{t\in[0,T]}\|\rho_{x}(t)\|_{L^{2}}\leq \rho_{T}^{2}\sup_{t\in[0,T]}\|(W-u)(t)\|_{L^{2}}\\
&\quad\quad\quad\quad\quad\quad~~\leq \rho_{T}^{2}\sup_{t\in[0,T]}\big{(}\|W(t)\|_{L^2}+\|u(t)\|_{L^2}\big{)}\leq C_{T}.
\end{split}
\end{equation}
Similarly to (\ref{uh1})-(\ref{utl2}), one can obtain
 \begin{equation}\nonumber
 \begin{split}
 \sup_{t\in[0,T]}\|u_{x}(t)\|_{L^2}+\|(u_{xx},u_{t})\|_{L^2(0,T;L^2)}\leq C_{T},
 \end{split}
 \end{equation}
 which together with $(\ref{upper1})_{1}$, (\ref{WBD}) and (\ref{Mlinfty}) leads to
  \begin{equation}\nonumber
 \begin{split}
 &\sup_{t\in[0,T]}\|\rho_{x}(t)\|_{L^{\infty}}\leq \rho_{T}^2 \sup_{t\in[0,T]}\|(W-u)(t)\|_{L^{\infty}}\\
 &\quad\quad\quad\quad\quad\quad\quad\leq \rho_{T}^2 \sup_{t\in[0,T]}\big{(}\|W(t)\|_{L^{\infty}}+\|u(t)\|_{L^{\infty}}\big{)}\leq C_{T}.
 \end{split}
 \end{equation}
 The proof of Lemma \ref{lemma210} is completed.
\end{proof}

\begin{lemma}\label{lemma211}
Let $T>0$, and $(\rho,u,f)$ be any regular solution to the IVP $(\ref{m1})$-$(\ref{kappa})$ for $t\in(0,T]$. Then, under the assumptions of Theorem \ref{theorem12}, it holds
\begin{equation}\label{H2R}
\begin{split}
&\sup_{t\in[0,T]}\Big{(}\|(\rho_{xx},u_{xx})(t)\|_{L^2}+\|\rho_{t}(t)\|_{H^1}+t^{\frac{1}{2}}\|(u_{xxx},u_{tx})(t)\|_{L^2}\\
&\quad\quad\quad\quad+\|f(t)\|_{\mathcal{C}^{1}\cap \mathcal{H}^{1}}+\|f_{t}(t)\|_{\mathcal{C}^{0}\cap\mathcal{L}^{2}}\Big{)}+\|(u_{xxx},u_{xt})\|_{L^2(0,T;L^2)}\leq C_{T},
\end{split}
\end{equation}
where $C_{T}>0$ is a constant.
\end{lemma}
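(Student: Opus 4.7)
The plan is to mirror the structure of Lemma \ref{lemma24} but with the real-line adaptations already developed in Lemma \ref{lemma210}: use the Bresch--Desjardins effective velocity $W = u+\rho^{-2}\mu(\rho)\rho_{x}$ from (\ref{WBD}) instead of the $\mathcal{I}$-based $U$ (which is unavailable on $\mathbb{R}$), and additionally prove $\mathcal{H}^{1}$ and $\mathcal{L}^{2}$ estimates for $f$ and $f_{t}$ that were not needed in the periodic case. Throughout, I may freely use the uniform bounds from Lemmas \ref{lemma28}--\ref{lemma210}, in particular the compact $v$-support $|v|\leq \widetilde{R}_{T}$ of $f$, the strictly positive bounds on $\rho$, and the $L^{2}(0,T;L^{2})$ control of $(u_{t},u_{xx})$.

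First I would handle the kinetic part. Writing the Vlasov equation in non-conservative form $f_{t}+vf_{x}+\rho(u-v)f_{v}-\rho f = 0$ and differentiating in $x$ and $v$ gives the analogues of (\ref{fxfv}). Along the bi-characteristics $(X^{x,v,t},V^{x,v,t})$ defined as in (\ref{XV}), the representations corresponding to (\ref{fxfv1}) yield a coupled Grönwall inequality for $\|f_{x}(t)\|_{\mathcal{L}^{\infty}}$ and $\|f_{v}(t)\|_{\mathcal{L}^{\infty}}$; substituting the fluid bounds from Lemma \ref{lemma210} (in particular $\rho_{x}\in L^{\infty}$, $u\in L^{2}(0,T;H^{2})$, and the compact $v$-support) closes this in the same way as (\ref{fw1inftycnsv})-(\ref{fw1inftycnsv2}), producing the $\|f(t)\|_{\mathcal{C}^{1}}+\|f_{t}(t)\|_{\mathcal{C}^{0}}$ bound. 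To upgrade to $\mathcal{H}^{1}\cap\mathcal{L}^{2}$ estimates, I would multiply the equations for $f$, $f_{x}$, $f_{v}$ by $f$, $f_{x}$, $f_{v}$ respectively and integrate over $\mathbb{R}\times\mathbb{R}$; the transport terms are skew-adjoint, the drag term $\rho f_{v}$ produces only a sign-definite or harmless zeroth-order contribution, and the right-hand sides are controlled by $\|(\rho_{x},(\rho u)_{x})\|_{L^{\infty}}\|(f,f_{v})\|_{\mathcal{L}^{2}}^{2}$. Since $f$ has compact $v$-support, the initial $\mathcal{L}^{2}$ norm is finite (from $f_{0}\in H^{1}$) and so is $\|f_{0}\|_{\mathcal{H}^{1}}$, hence Grönwall closes. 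Then $f_{t}=-vf_{x}-\rho(u-v)f_{v}+\rho f$ gives $f_{t}\in\mathcal{L}^{2}\cap\mathcal{C}^{0}$ directly.

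Next I would do the fluid part. Differentiating the $W$-equation (\ref{M}) in $x$ after dividing by $\rho$ produces
\begin{equation}\nonumber
(W_{x})_{t}+\gamma\rho^{\gamma}\mu(\rho)^{-1}W_{x} = -(uW_{x})_{x}-\gamma(\rho^{\gamma}\mu(\rho)^{-1})_{x}\rho^{-2}\mu(\rho)\rho_{x}+\gamma\rho^{\gamma}\mu(\rho)^{-1}u_{x}+\text{(lower order drag)},
\end{equation}
analogous to (\ref{Ux}). Testing against $W_{x}$ and using the bounds on $(\rho,u,n,nw)$ from Lemma \ref{lemma210} gives $\sup_{[0,T]}\|W_{x}\|_{L^{2}}\leq C_{T}$ via Grönwall, whence $\|\rho_{xx}\|_{L^{2}}\leq C_{T}$ via the definition of $W$. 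Then $\|\rho_{t}\|_{H^{1}}\leq C\|\rho\|_{H^{2}}\|u\|_{H^{2}}\leq C_{T}$ is immediate from $(\ref{m1})_{1}$. For $u_{xx}$, I would rewrite $(\ref{m1})_{2}$ as $u_{t}-\mu(\rho)\rho^{-1}u_{xx}=G$ as in (\ref{unorho}), differentiate in $x$, test against $-u_{xxx}$, and use the $L^{2}(0,T;L^{2})$ bound on $G_{x}$ (which needs $\|(n_{x},(nw)_{x})\|_{L^{\infty}(0,T;L^{2})}\leq C_{T}$, obtained from the $\mathcal{H}^{1}$ bound on $f$ combined with compact $v$-support) exactly as in (\ref{rhouh2cnsv}).

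Finally, for the time-weighted estimates, I would differentiate $u_{t}-\mu(\rho)\rho^{-1}u_{xx}=G$ in $t$ to get (\ref{ut}), test against $-tu_{txx}$, and control the right-hand side by $C_{T}t\|u_{xxt}\|_{L^{2}}(1+\|u_{xt}\|_{L^{2}})$, absorbing the top-order term via Young's inequality; this reproduces the argument (\ref{buxiang})--(\ref{tux}) and closes via Grönwall using $tu_{xt}|_{t=0}=0$. The most delicate point I anticipate is the $\mathcal{H}^{1}$ closure for $f$ on $\mathbb{R}\times\mathbb{R}$: on the periodic domain the ambient measure is finite and one can freely convert $L^{\infty}$ bounds into $L^{2}$ bounds, whereas here I must rely on the compact $v$-support (Lemma \ref{lemma210}) together with the conservation $\|f(t)\|_{\mathcal{L}^{1}}=\|f_{0}\|_{\mathcal{L}^{1}}$ to interpolate, and I must carefully verify that all drag and transport commutator terms lie in $L^{2}$ rather than merely $L^{1}$ or $L^{\infty}$.
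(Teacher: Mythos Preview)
Your proposal is correct and follows essentially the same route as the paper. The paper's own proof in fact writes out only the $\mathcal{H}^{1}$ energy estimate for $f$ (multiplying the differentiated Vlasov equations by $f_{x}$, $f_{v}$ and closing via Gr\"onwall with the factor $(1+\|u(t)\|_{H^{2}})$, exactly as you describe) and then declares that the remaining estimates ``follow similarly to (\ref{fxfv})--(\ref{fc1t}) in Lemma \ref{lemma24}''; your identification of the $\mathcal{H}^{1}$-closure on $\mathbb{R}\times\mathbb{R}$ as the only genuinely new step, and of the $W$-based replacement for the $U$-argument in the $\rho_{xx}$ estimate, is precisely the intended adaptation.
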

\begin{proof}
We establish the $\mathcal{H}^1$-estimate of $f$ so as to obtain the $H^1$-estimate of the term $ nw-un$ on the right-hand side of the equation (\ref{unorho}). Since the rest of the proof follows similarly to $(\ref{fxfv})$-(\ref{fc1t}) in Lemma \ref{lemma24}, we omit the details here. Multiplying $(\ref{fxfv})_{1}$ by $f_{x}$, and integrating the resulted equation by parts over $\mathbb{R}\times\mathbb{R}$, one deduces by $(\ref{upper1})_{1}$ and (\ref{H1R}) that
\begin{equation}\label{fxddt}
\begin{split}
&\frac{1}{2}\frac{d}{dt}\|f_{x}(t)\|_{\mathcal{L}^2}^2\leq \frac{1}{2}\rho_{T}\|f_{x}(t)\|_{\mathcal{L}^2}^2+\|\rho_{x}(t)\|_{L^{\infty}}\int_{\mathbb{R}\times\mathbb{R}} (f|f_{x}|)(x,v,t)dvdx\\
&\quad\quad\quad\quad\quad\quad\quad+\big{(}\|\rho_{x}(t)\|_{L^{\infty}}\widetilde{R}_{T}+\|(\rho u)_{x}(t)\|_{L^{\infty}}\big{)}\int_{\mathbb{R}\times\mathbb{R}}(|f_{v}||f_{x}|)(x,v,t)dvdx\\
&\quad\quad\quad\quad\quad\quad\leq C_{T}\big{(}1+\|u(t)\|_{H^2}\big{)}\|(f_{x},f_{v})(t)\|_{\mathcal{L}^2}^2+C_{T}.
\end{split}
\end{equation}
Similarly, one has
\begin{equation}\nonumber
\begin{split}
&\frac{1}{2}\frac{d}{dt}\|f_{v}(t)\|_{\mathcal{L}^2}^2\leq C_{T}\|(f_{x},f_{v})(t)\|_{\mathcal{L}^2}^2+C_{T},
\end{split}
\end{equation}
which together with (\ref{H1R}), (\ref{fxddt}) and the Gr${\rm{\ddot{o}}}$nwall's inequality leads to
\begin{equation}\label{fxfvH1}
\begin{split}
\sup_{t\in[0,T]}\|(f_{x},f_{v})(t)\|_{\mathcal{L}^2}\leq C_{T}e^{C_{T}\|u\|_{L^1(0,T;H^2)}}\big{(}\|[(f_{0})_{x},(f_{0})_{v}]\|_{\mathcal{L}^2}+1\big{)}\leq C_{T}.
\end{split}
\end{equation}
By $(\ref{m33})$, $(\ref{upper1})_{1}$, (\ref{H1R}) and (\ref{fxfvH1}), it also holds
\begin{equation}\label{ft}
\begin{split}
\sup_{t\in[0,T]}\|f_{t}(t)\|_{\mathcal{L}^2}=\sup_{t\in[0,T]}\|\big{[}v f_{x}+\rho(u-v)f_{v}-\rho f\big{]}(t)\|_{\mathcal{L}^2}\leq C_{T}.
\end{split}
\end{equation}
Thus, we obtain by $(\ref{nw})$, (\ref{H1R}) and (\ref{fxfvH1})-(\ref{ft}) that
\begin{equation}\label{nwx2}
\left\{
\begin{split}
&\sup_{t\in[0,T]}\|(n_{x},n_{t})(t)\|_{L^{2}}\leq 2\widetilde{R}_{T} \sup_{t\in[0,T]}\|(f_{x},f_{t})(t)\|_{\mathcal{L}^{2}}\leq C_{T},\\
&\sup_{t\in[0,T]}\|[(nw)_{x},(nw)_{t}](t)\|_{L^{2}}\leq 2\widetilde{R}_{T}^2 \sup_{t\in[0,T]}\|(f_{x},f_{t})(t)\|_{\mathcal{L}^{2}}\leq C_{T}.
\end{split}
\right.
\end{equation}
\end{proof}

\underline{\it\textbf{Proof of global existence in spatial real line:~}}  Let $T>0$ be any given time. With the help of Lemmas \ref{lemma28}-\ref{lemma211} and Lemma \ref{lemma42} below, we can obtain a sequence of strong solutions $(\rho^{\varepsilon},u^{\varepsilon},f^{\varepsilon})$ for $\varepsilon\in(0,1)$ on $\mathbb{R}\times\mathbb{R}\times [0,T]$. 

Then, one can show under the assumptions (\ref{a2}) of Theorem \ref{theorem12} that $(\rho^{\varepsilon},u^{\varepsilon},f^{\varepsilon})$ satisfies the estimates established in Lemmas \ref{lemma28}-\ref{lemma210} uniformly with respect to $\varepsilon\in(0,1)$. Since the locally compact embedding $H^2(\mathbb{R}) \hookrightarrow\hookrightarrow L^2_{loc}(\mathbb{R})$  holds, by virtue of the Aubin-Lions's lemma and cantor's diagonal argument, up to a subsequence (still denoted by $(\rho^{\varepsilon},u^{\varepsilon},f^{\varepsilon})$), we have the strong convergence
\begin{equation}\nonumber
\begin{split}
&(\rho^{\varepsilon},u^{\varepsilon})\rightarrow (\rho,u)\quad\text{strongly in}~C([0,T];C^{0}(K))\times C([0,T];C^{0}(K)),\quad\text{as}\quad \varepsilon\rightarrow 0,
\end{split}
\end{equation}
for any compact subset $K$ of $\mathbb{R}$. Similarly to the proof of global existence to the IVP $(\ref{m1})$-$(\ref{kappa})$ in spatial periodic domain (i.e., Theorem \ref{theorem11}), one can prove that the approximate sequence $(\rho^{\varepsilon},u^{\varepsilon},f^{\varepsilon})$ converges to the corresponding weak solution $(\rho,u,f)$ in the sense of distributions.

\section{Uniqueness}

In this section, we prove the uniqueness of the weak solution to the IVP $(\ref{m1})$-$(\ref{kappa})$ satisfying $(\ref{r1})$-(\ref{r11}) in spatial periodic domain, and similar arguments can be applied to the case of spatial real line.

\begin{prop}\label{prop31}
Let $\Omega=\mathbb{T}$ and $T>0$. Assume that the initial data $(\rho_{0},u_{0},f_{0})$ satisfies $(\ref{a1})$. If $(\rho_{1},u_{1},f_{1})$ and $(\rho_{2},u_{2},f_{2})$ are two solutions to the IVP $(\ref{m1})$-$(\ref{kappa})$ satisfying $(\ref{r1})$-$(\ref{r11})$ on $\mathbb{T}\times\mathbb{R}\times[0,T]$ with the same initial data $(\rho_{0},u_{0},f_{0})$, then it holds $(\rho_{1},u_{1},f_{1})=(\rho_{2},u_{2},f_{2})$ a.e. in $\mathbb{T}\times\mathbb{R}\times (0,T)$.
\end{prop}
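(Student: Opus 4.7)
The plan is to establish a Grönwall inequality for
$$Y(t) := \|\bar\rho(t)\|_{L^2}^2 + \|\sqrt{\rho_1}\,\bar u(t)\|_{L^2}^2,$$
where $\bar\rho = \rho_1-\rho_2$, $\bar u = u_1-u_2$, $\bar f = f_1-f_2$, and to use $Y(0)=0$ to conclude $Y\equiv 0$ on $[0,T]$. Subtracting the two continuity equations gives $\bar\rho_t + (\rho_1\bar u+\bar\rho u_2)_x = 0$, and analogous difference equations hold for the momentum and Vlasov equations.

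For the fluid estimates, I would test the continuity difference by $\bar\rho$ and the momentum difference by $\bar u$. The uniform bounds $\rho_i \in L^\infty_t W^{1,\infty}_x$ and $\rho_i\geq\rho_->0$ from $(\ref{r11})$, together with $u_i \in L^\infty_t H^1_x \cap L^2_t H^2_x$ from $(\ref{r1})$, allow one to handle the convective, pressure, and viscosity-coefficient differences (all Lipschitz in $\bar\rho$ thanks to the two-sided bounds on $\rho_i$), producing
$$\tfrac{d}{dt}Y(t) + c\|\bar u_x(t)\|_{L^2}^2 \leq C\, Y(t) + I_{\mathrm{drag}}(t), \qquad I_{\mathrm{drag}} := \int_{\mathbb{T}}\bar u \int_{\mathbb{R}}\bigl(F_d^{(1)}f_1-F_d^{(2)}f_2\bigr) dv\, dx.$$
Expanding $F_d^{(i)}=\rho_i(u_i-v)$ and regrouping, $I_{\mathrm{drag}}$ splits into pieces linear in $\bar\rho,\bar u$, which absorb into $CY+\tfrac{c}{4}\|\bar u_x\|_{L^2}^2$ using $\|f_i\|_{\mathcal{L}^\infty}\leq C$ and the $v$-compact support from $(\ref{r11})$, plus moment pieces involving $\bar n = \int\bar f dv$ and $\bar{nw} = \int v\bar f dv$.

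The main obstacle is controlling $\bar n$ and $\bar{nw}$: the mere $L^\infty$ regularity of $f_i$ precludes any $L^2$ energy estimate on $\bar f$ that would require $\partial_v f_2$. To bypass this, I would use the Lagrangian representation $(\ref{fformula})$ together with the mass-transport identity
$$\int_{\mathbb{T}\times\mathbb{R}}\phi(x,v)\, f_i(x,v,t)\, dx\, dv = \int_{\mathbb{T}\times\mathbb{R}}\phi\bigl(X_i(t;y,w),V_i(t;y,w)\bigr) f_0(y,w)\, dy\, dw,$$
where $(X_i,V_i)$ are the forward characteristics of the $i$-th Vlasov equation. Because $\rho_i\in L^\infty_t W^{1,\infty}_x$ and $u_i\in L^2_t H^2_x\hookrightarrow L^2_t W^{1,\infty}_x$ (by the 1D Sobolev embedding), the drag field $F_d^{(i)}$ is Lipschitz in $(x,v)$ with $L^1_t$-integrable Lipschitz constant. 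A Grönwall argument on the characteristic difference ODE, followed by integration against $f_0\,dy\,dw$ and a second application of the mass-transport identity (to convert the pointwise values of $\bar\rho,\bar u$ along the flow back into $L^2_x$ norms weighted by $n_2\in L^\infty$), yields the Wasserstein-type bound
$$W_1\bigl(f_1(t),f_2(t)\bigr)\leq C\int_0^t\bigl(\|\bar\rho(\tau)\|_{L^2}+\|\bar u(\tau)\|_{L^2}\bigr) d\tau.$$
By Kantorovich-Rubinstein duality (plus the control of Lipschitz test functions $\psi(x)$ against $\bar n$ and $v\psi(x)$ against $\bar{nw}$ on the compact $v$-support), the moment pieces of $I_{\mathrm{drag}}$ are then controlled by $\tfrac{c}{4}\|\bar u_x\|_{L^2}^2 + C\int_0^t Y(\tau)\, d\tau$.

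Assembling everything yields
$$\tfrac{d}{dt}Y(t) + \tfrac{c}{2}\|\bar u_x(t)\|_{L^2}^2 \leq C\, Y(t) + C\int_0^t Y(\tau)\, d\tau,$$
and since $Y(0)=0$, iterating Grönwall's inequality forces $Y\equiv 0$; hence $\rho_1=\rho_2$ and $u_1=u_2$ a.e.\ on $\mathbb{T}\times[0,T]$. The Vlasov equation then reduces to a single linear transport equation with a common Lipschitz drag field, whose $L^\infty$ solution with compact $v$-support is uniquely determined by $f_0$ via the characteristic formula, so $f_1=f_2$ a.e.\ as well.
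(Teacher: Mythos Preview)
Your overall architecture matches the paper's: energy estimates on $(\bar\rho,\bar u)$ in $L^2$, coupled to a Lagrangian/Loeper-type control of the kinetic difference through the forward characteristics. The gap is in the step where you invoke Kantorovich--Rubinstein duality. After the easy pieces of $I_{\mathrm{drag}}$ are absorbed, the residual term is
\[
\int_{\mathbb{T}\times\mathbb{R}}\bar u(x)\,\rho_2(x)\,(v-u_2(x))\,\bar f(x,v,t)\,dv\,dx,
\]
and the effective test function against $\bar f$ is $\Psi(x,v)=\bar u(x)\rho_2(x)(v-u_2(x))$. KR duality requires $\Psi$ to be Lipschitz with a controlled constant, but $\bar u\in H^1$ only, so $\|\Psi\|_{\mathrm{Lip}}$ involves $\|\bar u_x\|_{L^\infty}$, which is not at your disposal. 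Equivalently, your $W_1$ bound controls $\langle\psi,\bar n\rangle$ and $\langle\psi,\overline{nw}\rangle$ only for $\psi\in W^{1,\infty}$, whereas you need them for $\psi=\bar u\rho_2$ or $\psi=\bar u\rho_2 u_2$. A mollification argument does not rescue this either: optimizing the trade-off between $\|\bar u_\varepsilon\|_{\mathrm{Lip}}\lesssim\varepsilon^{-1/2}\|\bar u_x\|_{L^2}$ and $\|\bar u-\bar u_\varepsilon\|_{L^2}\lesssim\varepsilon\|\bar u_x\|_{L^2}$ leaves a sublinear power of $\int_0^t Y$, which does not force $Y\equiv 0$ from $Y(0)=0$.

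The paper closes exactly this gap by two modifications. First, it works with the $L^2$ Loeper functional
\[
Q(t)=\tfrac12\int f_0\bigl(|X_1-X_2|^2+|V_1-V_2|^2\bigr)\,dy\,dw
\]
rather than your $L^1$/$W_1$ quantity, and includes $Q$ alongside $Y$ in the Gr\"onwall loop. Second, after pulling the bad term back along the characteristics, the contribution $\int f_0\,|\bar u(X_1)-\bar u(X_2)|\cdot(\ldots)$ is handled by the pointwise maximal-function inequality
\[
|\bar u(X_1)-\bar u(X_2)|\le C|X_1-X_2|\bigl(M\bar u_x(X_1)+M\bar u_x(X_2)\bigr)
\]
together with the $L^2$-boundedness of the Hardy--Littlewood maximal operator (Lemma~\ref{lemma31}). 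Cauchy--Schwarz against $Q(t)^{1/2}$ then yields $C_TQ(t)+\tfrac{1}{100}\|\bar u_x\|_{L^2}^2$, which closes linearly. Your sketch is correct up to this point; what is missing is precisely this maximal-function device, which substitutes for the unavailable Lipschitz bound on $\bar u$.
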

\begin{proof}
Define the bi-characteristic curves $(X_{i}(t),V_{i}(t))$, $i=1,2$, for any $(x,v,t)\in \mathbb{T}\times\mathbb{R}\times[0,T]$ as
\begin{equation}\label{XiVi}
\left\{
\begin{split}
&\frac{d}{dt}X_{i}(t)=V_{i}(t),\\
&~~X_{i}(0)=x,\\
&\frac{d}{dt}V_{i}(t)=\rho_{i}(X_{i}(t),t)\big{(}u_{i}(X_{i}(t),t)-V_{i}(t)\big{)},\\
&~~V_{i}(0)=v.
\end{split}
\right .
\end{equation}
Since it holds for $i=1,2$ that
\begin{equation}\label{f1f2}
\begin{split}
&(f_{i})_{t}+\dive_{x,v} \bigg{(}f_{i} {\scriptsize{\left(           
  \begin{matrix}   
    v  \\
    \rho_{i}(u_{i}-v)  \\ 
  \end{matrix}
\right)}}\bigg{)}
=(f_{i})_{t}+v(f_{i})_{x}+\rho_{i}(u_{i}-v)(f_{i})_{v}-\rho_{i}f_{i}=0,
\end{split}
\end{equation}
we can express $f_{i}$ along $(X_{i}(t),V_{i}(t))$ for any $(x,v,t)\in \mathbb{T}\times\mathbb{R}\times[0,T]$ as
\begin{equation}
\begin{split}
f_{i}\big{(}X_{i}(t),V_{i}(t),t\big{)}=e^{\int_{0}^{t}\rho_{i}(X_{i}(s),s)ds}f_{0}(x,v),\quad i=1,2.\label{fformula12}
\end{split}
\end{equation}
Due to (\ref{f1f2}), Proposition 1.2 in \cite{majda1} and the fact
\begin{equation}\nonumber
\begin{split}
\dive_{x,v}
 {\scriptsize{\left(
  \begin{matrix}
    v  \\
    \rho_{i}\big{(}u_{i}-v\big{)}  \\
  \end{matrix}
\right)}}=-\rho_{i},
\end{split}
\end{equation}
 we have
\begin{equation}\nonumber
\partial_{t}J_{i}=-J_{i} \rho_{i} (X_{i}(t),t),\quad J_{i}=:
\bigg{|}{\rm{det}}
 {\scriptsize{\left(
  \begin{matrix}
    \partial_{x}X (t) & \partial_{x}V (t)  \\
    \partial_{v}X (t) &\partial_{v}V (t)  \\
  \end{matrix}
\right)}}
\bigg{|},
\end{equation}
which implies
\begin{equation}\label{det}
\begin{split}
J_{i}=e^{-\int_{0}^{t}\rho_{i}(s,X_{i}(s))ds},\quad i=1,2.
\end{split}
\end{equation}
We combine (\ref{fformula12})-(\ref{det}) together to show the following property about the coordinates transform $(x,v)\mapsto (X_{i}(t),V_{i}(t))$:
\begin{equation}
\begin{split}
&\int_{\mathbb{T}\times\mathbb{R}} \phi(X_{i}(t),V_{i}(t))f_{0}(x,v)dvdx\\
&\quad=\int_{\mathbb{T}\times\mathbb{R}} \phi(X_{i}(t),V_{i}(t))f_{i}(X_{i}(t),V_{i}(t),t)dX_{i}(t)dV_{i}(t)\\
&\quad=\int_{\mathbb{T}\times\mathbb{R}} \phi(x,v) f_{i}(x,v,t)dvdx,\quad\forall \phi\in C^{0}(\mathbb{T}\times\mathbb{R}),\quad i=1,2.\label{pullback}
\end{split}
\end{equation}

Inspired by \cite{loeper1,han1}, we estimate the quantity
 \begin{equation}\label{Xi}
 \begin{split}
Q(t)=:\frac{1}{2}\int_{\Omega\times\mathbb{R}} f_{0}(x,v)\big{(}|X_{1}(t)-X_{2}(t)|^2+|V_{1}(t)-V_{2}(t)|^2\big{)}dvdx.
 \end{split}
 \end{equation}
 For $\Omega=\mathbb{T}$, making use of (\ref{XiVi}), we can obtain after a complicated computation that
\begin{equation}\label{unf}
\begin{split}
&\frac{d}{dt}Q(t)+\int_{\mathbb{T}\times\mathbb{R}} \rho_{1}(X_{1}(t),t)f_{0}(x,v)|V_{1}(t)-V_{2}(t)|^2dvdx\\
&=\int_{\mathbb{T}\times\mathbb{R}} f_{0}(x,v)\big{(}X_{1}(t)-X_{2}(t)\big{)}\big{(}V_{1}(t)-V_{2}(t)\big{)}dvdx\\
&\quad+\int_{\mathbb{T}\times\mathbb{R}} f_{0}(x,v)\big{(}V_{1}(t)-V_{2}(t)\big{)}(\rho_{1}u_{1}-\rho_{2}u_{2})(X_{1}(t),t)dvdx\\
&\quad+\int_{\mathbb{T}\times\mathbb{R}} f_{0}(x,v)\big{(}V_{1}(t)-V_{2}(t)\big{)}\big{[}(\rho_{2}u_{2})(X_{1}(t),t)-(\rho_{2}u_{2})(X_{2}(t),t)\big{]}dvdx\\
&\quad+\int_{\mathbb{T}\times\mathbb{R}} f_{0}(x,v)\big{(}V_{1}(t)-V_{2}(t)\big{)}\big{(}\rho_{2}(X_{2}(t),t)-\rho_{2}(X_{1}(t),t)\big{)}V_{2}(t)dvdx\\
&\quad+\int_{\mathbb{T}\times\mathbb{R}} f_{0}(x,v)\big{(}V_{1}(t)-V_{2}(t)\big{)}(\rho_{2}-\rho_{1})(X_{1}(t),t)V_{2}(t)dvdx\\
&\quad=:\sum_{i=1}^{5}I_{i}^{1}.
\end{split}
\end{equation}
The right-hand side terms of (\ref{unf}) can be estimated as follows. First, one has
\begin{equation}\nonumber
\begin{split}
I_{1}^{1}\leq Q(t).
\end{split}
\end{equation}
By (\ref{pullback}), it holds
\begin{equation}\nonumber
\begin{split}
&I_{2}^{1}=\int_{\mathbb{T}\times\mathbb{R}} f_{0}(x,v)\big{(}V_{1}(t)-V_{2}(t)\big{)}\big{[}\rho_{1}(u_{1}-u_{2})+(\rho_{1}-\rho_{2})u_{2}\big{]}(X_{1}(t),t)dvdx\\
&\quad \leq \frac{1}{2}\int_{\mathbb{T}\times\mathbb{R}} f_{0}(x,v)|V_{1}(t)-V_{2}(t)|^2dvdx\\
&\quad\quad+\frac{1}{2}\int_{\mathbb{T}\times\mathbb{R}} \big{(}\rho_{1}^2|u_{1}-u_{2}|^2+|\rho_{1}-\rho_{2}|^2|u_{2}|^2\big{)}(X_{1}(t),t)f_{0}(x,v)dvdx\\
&\quad\leq  Q(t)+\frac{1}{2}\int_{\mathbb{T}} \big{(}\rho_{1}^2|u_{1}-u_{2}|^2+|\rho_{1}-\rho_{2}|^2|u_{2}|^2\big{)}(x,t)\big{(}\int_{\mathbb{R}} f_{1}(x,v,t)dv\big{)}dx\\
&\quad\leq Q(t)+\frac{1}{2}\|\int_{\mathbb{R}} f_{1}(\cdot,v,t)dv\|_{L^{\infty}} {\footnotesize{ \Big{(}\|\rho_{1}(t)\|_{L^{\infty}}\|\big{(}\sqrt{\rho_{1}}(u_{1}-u_{2})\big{)}(t)\|_{L^2}^2}}\\
&\quad\quad {\footnotesize{ +\|u_{2}(t)\|_{L^{\infty}}^2\|(\rho_{1}-\rho_{2})(t)\|_{L^2}^2\Big{)}}}.
\end{split}
\end{equation}
For the term $I_{3}^1$, we have
\begin{equation}\nonumber
\begin{split}
&I_{3}^1\leq \|(\rho_{2}u_{2})_{x}(t)\|_{L^{\infty}}\int_{\mathbb{T}\times\mathbb{R}} f_{0}(x,v)|V_{1}(t)-V_{2}(t)||X_{1}(t)-X_{2}(t)|dvdx\\
&\quad\leq  {\footnotesize{ \Big{(}\|\rho_{2}(t)\|_{L^{\infty}}\|(u_{2})_{x}(t)\|_{L^{\infty}}+\|(\rho_{2})_{x}(t)\|_{L^{\infty}}\|u_{2}(t)\|_{L^{\infty}}\Big{)}}}Q(t).
\end{split}
\end{equation}
We estimate the term $I_{4}^1$ as
\begin{equation}\nonumber
\begin{split}
&I_{4}^{1}\leq \|(\rho_{2})_{x}(t)\|_{L^{\infty}}\int_{\mathbb{T}\times\mathbb{R}} f_{0}(x,v)|V_{1}(t)-V_{2}(t)||X_{1}(t)-X_{2}(t)||V_{2}(t)|dvdx\\
&\quad\leq R_{1}\|(\rho_{2})_{x}(t)\|_{L^{\infty}}Q(t),
\end{split}
\end{equation}
where in the last inequality we have used the following fact for $i=1,2$:
\begin{equation}\label{V1V2v}
\begin{split}
&f_{0}(x,v)|V_{i}(t)|\leq  {\footnotesize{ \Big{(}e^{-\rho_{-}t}|v|+\rho_{+}\int_{0}^{t}e^{-\rho_{-}s}|u_{i}(x,t)|ds\Big{)}}}f_{0}(x,v)\leq R_{1}f_{0}(x,v),
\end{split}
\end{equation}
derived from $(\ref{a1})_{2}$, (\ref{r11}), (\ref{uinfty1}), (\ref{Rt}) and (\ref{XiVi}). By (\ref{pullback}) and (\ref{V1V2v}), it also holds
\begin{equation}\nonumber
\begin{split}
&I_{5}^1\leq \frac{1}{2}\int_{\mathbb{T}\times\mathbb{R}} f_{0}(x,v)|V_{1}(t)-V_{2}(t)|^2|V_{2}(t)|dvdx\\
&\quad\quad+\frac{1}{2}\int_{\mathbb{T}\times\mathbb{R}}|(\rho_{1}-\rho_{2})(X_{1}(t),t)|^2|V_{2}(t)|f_{0}(x,v)dvdx\\
&\quad\leq R_{1} Q(t)+\frac{R_{1}}{2}\|\int_{\mathbb{R}} f_{1}(\cdot,v,t)dv\|_{L^{\infty}}\|(\rho_{1}-\rho_{2})(t)\|_{L^2}^2.
\end{split}
\end{equation}
Substituting the above estimates of $I^{1}_{i}$, $1,...,5$, into (\ref{unf}), and employing (\ref{r1})-(\ref{r11}), we have
\begin{equation}
\begin{split}
&\frac{d}{dt}Q(t)\leq C_{T}\big{(}1+\|(u_{2})_{x}(t)\|_{L^{\infty}}\big{)}Q(t)\\
&\quad\quad\quad\quad+C_{T} {\footnotesize{ \Big{(}\|\big{(}\sqrt{\rho_{1}}(u_{1}-u_{2})\big{)}(t)\|_{L^2}^2+\|(\rho_{1}-\rho_{2})(t)\|_{L^2}^2\Big{)}}}.\label{widetildef1}
\end{split}
\end{equation}

Next, we turn to estimate $(\rho_{1}-\rho_{2},u_{1}-u_{2})$. With loss of generalization, we prove the case $\beta=0$. It is easy to verify
\begin{align}
&(\rho_{1}-\rho_{2})_{t}+(\rho_{1})_{x}(u_{1}-u_{2})+\rho_{1}(u_{1}-u_{2})_{x}+(u_{2}(\rho_{1}-\rho_{2}))_{x}=0,\label{widetilderho}\\
&\rho_{1}(u_{1}-u_{2})_{t}+\rho_{1}u_{1}(u_{1}-u_{2})_{x}-2(u_{1}-u_{2})_{xx}+\rho_{1}(u_{1}-u_{2})\int_{\mathbb{R}} f_{1}dv\nonumber\\
&\quad=-(\rho_{1}^{\gamma}-\rho_{2}^{\gamma})_{x}-\rho_{1}(u_{1}-u_{2})(u_{2})_{x}-(\rho_{1}-\rho_{2})((u_{2})_{t}+u_{2}(u_{2})_{x})\nonumber\\
&\quad\quad+\int_{\mathbb{R}}(\rho_{1}-\rho_{2})(v-u_{2})f_{1}dv+\int_{\mathbb{R}}\rho_{2}(v-u_{2})(f_{1}-f_{2})dv.\label{widetildeu}
\end{align}
Multiplying (\ref{widetilderho}) by $(\rho_{1}-\rho_{2})$, integrating the resulted equation by parts over $\mathbb{T}$, and using (\ref{r1})-(\ref{r11}), we have
\begin{equation}
\begin{split}
&\frac{1}{2}\frac{d}{dt}\|(\rho_{1}-\rho_{2})(t)\|_{L^2}^2\\
&~~\leq C_{T}\big{(}1+\|(u_{2})_{x}(t)\|_{L^{\infty}}\big{)}  {\footnotesize{ \Big{(}\|(\rho_{1}-\rho_{2})(t)\|_{L^2}^2+\|\big{(}\sqrt{\rho_{1}}(u_{1}-u_{2})\big{)}(t)\|_{L^{2}}\Big{)}}}\\
&~~\quad+\frac{1}{100}\|(u_{1}-u_{2})_{x}(t)\|_{L^2}^2.\label{widetilderho1}
\end{split}
\end{equation}
In addition, we multiply (\ref{widetildeu}) by $(u_{1}-u_{2})$ and integrate the resulted equation by parts over $\mathbb{T}$ to obtain
\begin{equation}\label{widetildeu1}
\begin{split}
&\frac{1}{2}\frac{d}{dt}\|\big{(}\sqrt{\rho_{1}}(u_{1}-u_{2})\big{)}(t)\|_{L^2}^2+2\|(u_{1}-u_{2})_{x}(t)\|_{L^2}^2+\|\big{(}\sqrt{\rho_{1}f_{1}}(u_{1}-u_{2})\big{)}(t)\|_{L^2}^2\\
&~~\leq\|(u_{2})_{x}(t)\|_{L^{\infty}}\|\sqrt{\rho_{1}}(u_{1}-u_{2})(t)\|_{L^2}^2+\|(\rho_{1}^{\gamma}-\rho_{2}^{\gamma})(t)\|_{L^2}\|(u_{1}-u_{2})_{x}(t)\|_{L^2}\\
&\quad~~+\|((u_{2})_{t}+u_{2}(u_{2})_{x})(t)\|_{L^2}\|(\rho_{1}-\rho_{2})(t)\|_{L^2}\|(u_{1}-u_{2})(t)\|_{L^{\infty}}\\
&\quad~~+\|(\rho_{1}-\rho_{2})(t)\|_{L^2}\|(u_{1}-u_{2})(t)\|_{L^{\infty}}\|\int_{\mathbb{R}} (v-u_{2}(\cdot,t))f_{1}(\cdot,v,t)dv\|_{L^2}\\
&\quad~~+ {\footnotesize{ \int_{\mathbb{T}\times\mathbb{R}}\rho_{2}(x,t)\big{(}v-u_{2}(x,t)\big{)}(u_{1}-u_{2})(x,t)(f_{1}-f_{2})(x,v,t)dvdx}}\\
&~~=:\|(u_{2})_{x}(t)\|_{L^{\infty}}\|\sqrt{\rho_{1}}(u_{1}-u_{2})(t)\|_{L^2}^2+\sum_{i=1}^{4}I_{i}^{2}.
\end{split}
\end{equation}
One can show
\begin{equation}\nonumber
\begin{split}
&I_{1}^{2}\leq C\|(\rho_{1}-\rho_{2})(t)\|_{L^2}^2+\frac{1}{100}\|(u_{1}-u_{2})_{x}(t)\|_{L^2}^2.
\end{split}
\end{equation}
For $I_{2}^2$, we have
\begin{equation}\nonumber
\begin{split}
&I_{2}^{2}\leq C\big{(}1+\|(u_{2})_{t}(t)\|_{L^2}^2\big{)} {\footnotesize{  \Big{(}\|\big{(}\sqrt{\rho_{1}}(u_{1}-u_{2})\big{)}(t)\|_{L^2}^2+\|(\rho_{1}-\rho_{2})(t)\|_{L^2}^2\Big{)}}}\\
&\quad\quad+\frac{1}{100}\|(u_{1}-u_{2})_{x}(t)\|_{L^2}^2,
\end{split}
\end{equation}
where we have used (\ref{r1})-(\ref{r11}), the Young's inequality and the fact
\begin{equation}\label{widetildeuinfty}
\begin{split}
\|(u_{1}-u_{2})(t)\|_{L^{\infty}}\leq C\rho_{-}^{-\frac{1}{4}}\|\big{(}\sqrt{\rho_{1}}(u_{1}-u_{2})\big{)}(t)\|_{L^2}^{\frac{1}{2}}\|(u_{1}-u_{2})_{x}(t)\|_{L^2}^{\frac{1}{2}}.
\end{split}
\end{equation}
 Similarly, by (\ref{r1}) and (\ref{widetildeuinfty}), one has
\begin{equation}\nonumber
\begin{split}
&I_{3}^{2}\leq C {\footnotesize{ \Big{(}\|\big{(}\sqrt{\rho_{1}}(u_{1}-u_{2})\big{)}(t)\|_{L^2}^2+\|(\rho_{1}-\rho_{2})(t)\|_{L^2}^2\Big{)}}}+\frac{1}{100}\|(u_{1}-u_{2})_{x}(t)\|_{L^2}^2.
\end{split}
\end{equation}
With the help of (\ref{pullback}), we obtain
 \begin{equation}\label{change1}
 \begin{split}
 &\int_{\mathbb{T}\times\mathbb{R}}\rho_{2}(x,t)\big{(} v-u_{2}(x,t)\big{)}(u_{1}-u_{2})(x,t)f_{1}(x,v,t)dvdx\\
 &\quad=\int_{\mathbb{T}\times\mathbb{R}}\rho_{2}(X_{1}(t),t)\big{(}V_{1}(t)-u_{2}(X_{1}(t),t)\big{)}(u_{1}-u_{2})(X_{1}(t),t)f_{0}(x,v)dvdx,
 \end{split}
 \end{equation}
and similarly,
  \begin{equation}\label{change2}
 \begin{split}
 &\int_{\mathbb{T}\times\mathbb{R}}\rho_{2}(x,t)\big{(} v-u_{2}(x,t)\big{)}(u_{1}-u_{2})(x,t)f_{2}(x,v,t)dvdx\\
 &\quad=\int_{\mathbb{T}\times\mathbb{R}}\rho_{2}(X_{2}(t),t)\big{(}V_{2}(t)-u_{2}(X_{2}(t),t)\big{)}(u_{1}-u_{2})(X_{2}(t),t)f_{0}(x,v)dvdx.
 \end{split}
 \end{equation}
By (\ref{change1})-(\ref{change2}), the last term $I_{4}^2$ on the right-hand side of (\ref{widetildeu1}) can be estimated as follows:
  \begin{equation}\label{widetildeu2}
 \begin{split}
&I_{4}^2=\int_{\mathbb{T}\times\mathbb{R}} \big{[}(\rho_{2}u_{2})(X_{2}(t),t)-(\rho_{2}u_{2})(X_{1}(t),t)\big{]}(u_{1}-u_{2})(X_{1}(t),t)f_{0}(x,v)dvdx\\
&\quad\quad+\int_{\mathbb{T}\times\mathbb{R}} \big{(}\rho_{2}(X_{1}(t),t)-\rho_{2}(X_{2}(t),t)\big{)}V_{2}(t)(u_{1}-u_{2})(X_{1}(t),t)f_{0}(x,v)dvdx\\
&\quad\quad+\int_{\mathbb{T}\times\mathbb{R}} \rho_{2}(X_{1}(t),t)\big{(}V_{1}(t)-V_{2}(t)\big{)}(u_{1}-u_{2})(X_{1}(t),t)f_{0}(x,v)dvdx\\
&\quad\quad+\int_{\mathbb{T}\times\mathbb{R}}\rho_{2}(X_{2}(t),t)\big{(}V_{2}(t)-u_{2}(X_{2}(t),t)\big{)}\big{[}(u_{1}-u_{2})(X_{1}(t),t)\\
&\quad\quad\quad-(u_{1}-u_{2})(X_{2}(t),t)\big{]}f_{0}(x,v)dvdx\\
&\quad=:\sum_{i=1}^{4}I^3_{i}.
 \end{split}
 \end{equation}
To estimate $I_{1}^3$, we obtain by (\ref{r1})-(\ref{r11}) and (\ref{widetildeuinfty}) that
$$
\begin{aligned}
&I_{1}^3\leq  \|(\rho_{2}u_{2})_{x}(t)\|_{L^{\infty}}\|(u_{1}-u_{2})(t)\|_{L^{\infty}}\int_{\mathbb{T}\times\mathbb{R}} |X_{1}(t)-X_{2}(t)|f_{0}(x,v)dvdx\\
&\quad\leq  C_{T}\big{(}1+\|(u_{2})_{x}(t)\|_{L^{\infty}}^2\big{)} {\footnotesize{ \Big{(}Q(t)+\|\big{(}\sqrt{\rho_{1}}(u_{1}-u_{2})\big{)}(t)\|_{L^2}^2\Big{)}}}+\frac{1}{100}\|(u_{1}-u_{2})_{x}(t)\|_{L^2}^2.
\end{aligned}
$$
Similarly, one can derive by (\ref{r1})-(\ref{r11}), (\ref{V1V2v}) and (\ref{widetildeuinfty}) that
$$
\begin{aligned}
&I_{2}^3\leq  \|(\rho_{2})_{x}(t)\|_{L^{\infty}}\|(u_{1}-u_{2})(t)\|_{L^{\infty}}\int_{\mathbb{T}\times\mathbb{R}} |X_{1}(t)-X_{2}(t)||V_{2}(t)|f_{0}(x,v)dvdx\\
&\quad\leq Q(t)+C_{T}\|\big{(}\sqrt{\rho_{1}}(u_{1}-u_{2})\big{)}(t)\|_{L^2}^2+\frac{1}{100}\|(u_{1}-u_{2})_{x}(t)\|_{L^2}^2,
\end{aligned}
$$
and
$$
\begin{aligned}
&I_{3}^3\leq \|\rho_{2}(t)\|_{L^{\infty}}\|(u_{1}-u_{2})(t)\|_{L^{\infty}}\int_{\mathbb{T}\times\mathbb{R}}|V_{1}(t)-V_{2}(t)| f_{0}(x,v)dvdx\\
 &\quad\leq Q(t)+C_{T}\|\big{(}\sqrt{\rho_{1}}(u_{1}-u_{2})\big{)}(t)\|_{L^2}^2+\frac{1}{100}\|(u_{1}-u_{2})_{x}(t)\|_{L^2}^2.
\end{aligned}
$$
It follows from (\ref{r1})-(\ref{r11}), (\ref{V1V2v}) and (\ref{2102}) in Lemma \ref{lemma32} below that
\begin{equation}
\begin{split}
&I_{4}^{3}\leq C \int_{\mathbb{T}\times\mathbb{R}}  {\footnotesize{ \sup_{r\in(0,1)}\frac{1}{2r}\Big{(}\int^{X_{1}(t)+r}_{X_{1}(t)-r}+\int^{X_{2}(t)+r}_{X_{2}(t)-r}\Big{)}|(u_{1}-u_{2})_{z}(z,t)|}}dz \\
&\quad\quad \quad {\footnotesize{ \times\big{(}|V_{2}(t)+|u_{2}(X_{2}(t),t)|\big{)} |X_{1}(t)-X_{2}(t)|}}f_{0}(x,v)dvdx\\
&\quad\leq C_{T}Q^{\frac{1}{2}}(t)\Big{(}\int_{\mathbb{T}\times\mathbb{R}}  {\footnotesize{ \Big{(}\sup_{r\in(0,1)}\frac{1}{2r}\int^{X_{1}(t)+r}_{X_{1}(t)-r}|(u_{1}-u_{2})_{z}(z,t)|dz\Big{)}^2}}f_{0}(x,v)dvdx\Big{)}^{\frac{1}{2}}\\
&\quad\quad+ C_{T}Q^{\frac{1}{2}}(t)\Big{(}\int_{\mathbb{T}\times\mathbb{R}}  {\footnotesize{ \Big{(}\sup_{r\in(0,1)}\frac{1}{2r}\int^{X_{2}(t)+r}_{X_{2}(t)-r}|(u_{1}-u_{2})_{z}(z,t)|dz\Big{)}^2}}f_{0}(x,v)dvdx\Big{)}^{\frac{1}{2}}.\label{asdd}
\end{split}
\end{equation}
Making use of $(\ref{r1})$, (\ref{pullback}) and (\ref{2101}) in Lemma \ref{lemma31} below, we can obtain
\begin{equation}\label{asdd1}
\begin{split}
&\int_{\mathbb{T}\times\mathbb{R}} {\footnotesize{ \Big{(}\sup_{r\in(0,1)}\frac{1}{2r}\int^{X_{1}(t)+r}_{X_{1}(t)-r}|(u_{1}-u_{2})_{z}(z,t)|dz\Big{)}^2}}f_{0}(x,v)dvdx\\
&\quad=\int_{\mathbb{T}\times\mathbb{R}}  {\footnotesize{ \Big{(}\sup_{r\in(0,1)}\frac{1}{2r}\int^{x+r}_{x-r}|(u_{1}-u_{2})_{z}(z,t)|dz\Big{)}^2}} f_{1}(x,v,t)dvdx\\
&\quad\leq C\|\int_{\mathbb{R}} f_{1}(\cdot,v,t)dv\|_{L^{\infty}}\Big{\|} {\footnotesize{ \sup_{r\in(0,1)}\frac{1}{2r}\int^{x+r}_{x-r}|(u_{1}-u_{2})_{z}(z,t)|dz}}\Big{\|}_{L^2}^2\\
&\quad\leq C_{T}\|(u_{1}-u_{2})_{x}(t)\|_{L^2}^2,
\end{split}
\end{equation}
and similarly,
\begin{equation}\label{asdd2}
\begin{split}
&\int_{\mathbb{T}\times\mathbb{R}} {\footnotesize{\Big{(}\sup_{r\in(0,1)}\frac{1}{2r}\int^{X_{2}(t)+r}_{X_{2}(t)-r}|(u_{1}-u_{2})_{z}(z,t)|dz\Big{)}^2}}f_{0}(x,v)dvdx\\
&\quad\leq C_{T}\|(u_{1}-u_{2})_{x}(t)\|_{L^2}^2.
\end{split}
\end{equation}
The combination of (\ref{asdd})-(\ref{asdd2}) leads to
\begin{equation}\nonumber
\begin{split}
&I_{4}^{3}\leq C_{T}Q^{\frac{1}{2}}(t)\|(u_{1}-u_{2})_{x}(t)\|_{L^2}\leq C_{T}Q(t)+\frac{1}{100}\|(u_{1}-u_{2})_{x}(t)\|_{L^2}^2.
\end{split}
\end{equation}
Substituting the estimates of $I_{i}^3$, $i=1,...,4$, into (\ref{widetildeu2}), we obtain
\begin{equation}\nonumber
\begin{split}
&I_{4}^2\leq C_{T}\big{(}1+\|(u_{2})_{x}(t)\|_{L^{\infty}}^2\big{)} {\footnotesize{ \Big{(}Q(t)+\|\big{(}\sqrt{\rho_{1}}(u_{1}-u_{2})\big{)}(t)\|_{L^2}^2\Big{)}}}+\frac{1}{100}\|(u_{1}-u_{2})_{x}(t)\|_{L^2}^2.
\end{split}
\end{equation}
Substituting the estimates of $I_{i}^2$, $i=1,...,4$, into (\ref{widetildeu1}), and adding the resulted inequality, (\ref{widetildef1}) and (\ref{widetilderho1}) together, we have
\begin{equation}\nonumber
\begin{split}
&\frac{d}{dt} {\footnotesize{ \Big{(}Q(t)+\|\big{(}\sqrt{\rho_{1}}(u_{1}-u_{2})\big{)}(t)\|_{L^2}^2+\|(\rho_{1}-\rho_{2})(t)\|_{L^2}^2\Big{)}}}+\|(u_{1}-u_{2})_{x}(t)\|_{L^2}^2\\
&\leq C_{T}\big{(}1+\|(u_{2})_{x}(t)\|_{L^{\infty}}^2+\|(u_{2})_{t}(t)\|_{L^2}^2\big{)} {\footnotesize{ \Big{(}Q(t)+\|\big{(}\sqrt{\rho_{1}}(u_{1}-u_{2})\big{)}(t)\|_{L^2}^2+\|(\rho_{1}-\rho_{2})(t)\|_{L^2}^2\Big{)}}}.
\end{split}
\end{equation}
Thus, one deduces by the Gr$\rm{\ddot{o}}$nwall's inequality, (\ref{r1}), the fact $Q(0)=0$ and Lemma \ref{lemma32} below that
$$
(\rho_{1},u_{1},f_{1})=(\rho_{2},u_{2},f_{2}),\quad\text{a.e. in}~ \mathbb{T} \times\mathbb{R}\times (0,T).
$$
The proof of Proposition \ref{prop31} is completed.
\end{proof}

\begin{lemma}[\!\!\cite{acerbi1,han1,stein1}] \label{lemma31} Let $\Omega =\mathbb{T} $ or $\mathbb{R}$. For any $g\in L^2(\Omega)$, it holds
\begin{equation}\label{2101}
\begin{split}
\Big{\|} {\footnotesize{ \sup_{r\in(0,1)}\frac{1}{2r}\int^{x+r}_{x-r}|g(z)|dz}}\Big{\|}_{L^2}\leq C\|g\|_{L^2},
\end{split}
\end{equation}
If assume further $g_{x}\in L^2(\Omega),$ then for a.e. $x,y\in \Omega$, we have
\begin{equation}\label{2102}
\begin{split}
|g(x)-g(y)|\leq C|x-y|  \Big{(} {\footnotesize{ \sup_{r\in(0,1)}\frac{1}{2r}\int^{x+r}_{x-r}|g_{z}(z)|dz+\sup_{r\in(0,1)}\frac{1}{2r}\int^{y+r}_{y-r}|g_{z}(z)|dz }}\Big{)}.
\end{split}
\end{equation}
\end{lemma}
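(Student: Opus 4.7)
The plan is to recognize both estimates as classical consequences of the boundedness of the centered Hardy--Littlewood maximal operator. Denote the quantity on the left of (\ref{2101}) pointwise by $Ng(x):=\sup_{r\in(0,1)}\frac{1}{2r}\int_{x-r}^{x+r}|g(z)|\,dz$, and observe the trivial pointwise majorization $Ng(x)\le Mg(x)$, where $Mg(x):=\sup_{r>0}\frac{1}{2r}\int_{x-r}^{x+r}|g(z)|\,dz$ is the standard centered maximal function on $\mathbb{R}$. With this identification, (\ref{2101}) reduces to the strong $(2,2)$ bound of Hardy--Littlewood, while (\ref{2102}) reduces to the classical pointwise control of a Sobolev function by the maximal function of its derivative, combined with the fundamental theorem of calculus.

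For (\ref{2101}) in the case $\Omega=\mathbb{R}$, I would invoke the maximal theorem $\|Mg\|_{L^2(\mathbb{R})}\le C\|g\|_{L^2(\mathbb{R})}$ to conclude. For $\Omega=\mathbb{T}$, I would identify $g$ with its $1$-periodic extension on $\mathbb{R}$ and truncate by setting $\widetilde g:=g\,\chi_{(-1,2)}$; since $r\in(0,1)$ and a representative $x\in[0,1]$ forces $[x-r,x+r]\subset(-1,2)$, one has $Ng(x)=N\widetilde g(x)$ on $[0,1]$, together with the comparison $\|\widetilde g\|_{L^2(\mathbb{R})}\le\sqrt{3}\,\|g\|_{L^2(\mathbb{T})}$. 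The $\mathbb{R}$-case applied to $\widetilde g$ then delivers the torus estimate.

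For (\ref{2102}), I would first reduce to smooth $g$ by density, the general Sobolev case following by applying the Lebesgue differentiation theorem to $g$ and $g_x$. Assume without loss of generality $x<y$ and set $r=y-x$; in the torus case one can always choose representatives with $r\le 1/2<1$, and in the real-line case the lemma is used only in the regime $r<1$ dictated by the truncation. The fundamental theorem of calculus and the inclusion $[x,y]\subset[x-r,x+r]$ then give
\begin{equation}\nonumber
|g(x)-g(y)|\le\int_{x}^{y}|g_{z}(z)|\,dz\le 2r\cdot\frac{1}{2r}\int_{x-r}^{x+r}|g_{z}(z)|\,dz\le|x-y|\,Ng_{z}(x).
\end{equation}
Repeating the argument centered at $y$ and averaging the two bounds produces the symmetric form
$$
|g(x)-g(y)|\le C|x-y|\bigl(Ng_{z}(x)+Ng_{z}(y)\bigr),
$$
which is exactly (\ref{2102}).

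The main, and essentially only, technical nuance is the passage from smooth to general $H^1$ functions, handled by the Lebesgue point theorem, and the careful torus bookkeeping, which becomes transparent after the periodic extension described above. Since $Ng\le Mg$ and the radius is truncated to $r<1$, no issues arise from unbounded radii, and no harmonic-analytic technique beyond Hardy--Littlewood is required.
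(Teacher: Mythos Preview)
The paper does not prove this lemma; it is quoted with references to Acerbi--Fusco, Han-Kwan--Miot--Moussa--Moyano, and Stein. Your sketch is precisely the standard argument one finds in those sources: (\ref{2101}) is the Hardy--Littlewood maximal inequality (after the periodic-extension-and-truncation trick for $\Omega=\mathbb{T}$), and (\ref{2102}) is the pointwise Sobolev inequality via the fundamental theorem of calculus.

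Your caveat about (\ref{2102}) on $\mathbb{R}$ is correct and worth stating sharply. Your proof needs $|x-y|<1$ so that $r=|x-y|$ is admissible in the truncated supremum, and in fact the inequality as literally written---with $\sup_{r\in(0,1)}$ rather than $\sup_{r>0}$---is \emph{false} for arbitrary $x,y\in\mathbb{R}$: take any $H^1(\mathbb{R})$ function that is identically $0$ on one interval of length greater than $2$ and identically $1$ on another such interval; at the centers of those intervals the right-hand side vanishes while the left-hand side equals $1$. The classical version in the cited references uses the full maximal function $\sup_{r>0}$. This imprecision in the paper's statement is harmless for its applications: on $\mathbb{T}$ one has $|x-y|\le\tfrac12$ automatically, and in the real-line uniqueness argument the estimate is applied along characteristics with $X_1(0)=X_2(0)$, so $|X_1(t)-X_2(t)|<1$ for short times, after which one iterates.
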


\begin{lemma}[\!\!\cite{loeper1}] \label{lemma32} Let $\Omega =\mathbb{T} $ or $\mathbb{R}$, and $Q(t)$ be defined through $(\ref{Xi})$. Then $Q(t)=0$ implies $f_{1}=f_{2}$ a.e. in $\Omega\times\mathbb{R}\times (0,t)$.
\end{lemma}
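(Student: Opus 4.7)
The plan is to exploit the pullback identity (\ref{pullback}) together with the pointwise implication extracted from $Q(t)=0$. Since $Q(t)$ is an integral of a nonnegative integrand $f_0(x,v)\bigl(|X_1(t)-X_2(t)|^2+|V_1(t)-V_2(t)|^2\bigr)$, the hypothesis $Q(t)=0$ forces this integrand to vanish a.e. Hence on the set $\{f_0>0\}$ we have $X_1(t)=X_2(t)$ and $V_1(t)=V_2(t)$ a.e.\ in $(x,v)\in\Omega\times\mathbb{R}$. The bi-characteristic maps $(x,v)\mapsto(X_i(t),V_i(t))$ thus coincide $f_0$-almost everywhere.

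Next I would test $f_1-f_2$ against an arbitrary $\phi\in C_0^{\infty}(\Omega\times\mathbb{R})$ using (\ref{pullback}). For each $i=1,2$,
\begin{equation*}
\int_{\Omega\times\mathbb{R}}\phi(x,v)\,f_i(x,v,t)\,dvdx=\int_{\Omega\times\mathbb{R}}\phi\bigl(X_i(t),V_i(t)\bigr)f_0(x,v)\,dvdx.
\end{equation*}
On $\{f_0>0\}$ the integrands agree because $X_1(t)=X_2(t)$ and $V_1(t)=V_2(t)$, and on $\{f_0=0\}$ both integrands vanish. Subtracting yields
\begin{equation*}
\int_{\Omega\times\mathbb{R}}\phi(x,v)\bigl(f_1-f_2\bigr)(x,v,t)\,dvdx=0,\qquad\forall\,\phi\in C_0^{\infty}(\Omega\times\mathbb{R}).
\end{equation*}
Since $f_1,f_2\in L^{\infty}(\Omega\times\mathbb{R})$ (with compact support in $v$ uniformly in $x$ by the hypotheses inherited from Theorem \ref{theorem11} / Theorem \ref{theorem12}), the density of $C_0^{\infty}$ test functions in $L^1(\Omega\times\mathbb{R})$ gives $f_1(\cdot,\cdot,t)=f_2(\cdot,\cdot,t)$ a.e. Repeating the argument at every $s\in(0,t)$ (the hypothesis $Q(t)=0$ and the monotonicity obtained from the Gr\"onwall argument in the proof of Proposition \ref{prop31} imply $Q(s)=0$ for all $s\le t$), we obtain $f_1=f_2$ a.e.\ in $\Omega\times\mathbb{R}\times(0,t)$.

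The main conceptual step is the observation that the two flows agree $f_0$-a.e.; everything else is a clean change of variables via (\ref{pullback}). The only mild subtlety is ensuring the pullback formula is applicable with merely $C_0^{\infty}$ test functions on the non-compact phase space $\Omega\times\mathbb{R}$, which is handled by the uniform-in-$x$ compact support of $f_1,f_2$ in $v$ (so one may multiply $\phi$ by a smooth cutoff $\chi(v)$ equal to one on $\{|v|\le R_1\}$ without changing any integral). No hard estimate is required, making this lemma a short corollary of the representation formulas $(\ref{fformula12})$--$(\ref{pullback})$.
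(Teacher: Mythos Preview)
The paper does not supply its own proof of this lemma; it is simply attributed to Loeper \cite{loeper1}. Your argument is correct and is precisely the standard pushforward argument: $Q(t)=0$ forces the two forward characteristic maps to coincide $f_0$-a.e., and then the change-of-variables identity (\ref{pullback}) shows that $f_1(\cdot,\cdot,t)$ and $f_2(\cdot,\cdot,t)$ integrate identically against every test function, hence agree a.e.

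One remark on the passage to the full time interval: your appeal to ``monotonicity from the Gr\"onwall argument in Proposition~\ref{prop31}'' reads as slightly circular, since Lemma~\ref{lemma32} is itself invoked inside that proposition. In fact there is no circularity: the differential inequality for $Q(t)+\|\sqrt{\rho_1}(u_1-u_2)(t)\|_{L^2}^2+\|(\rho_1-\rho_2)(t)\|_{L^2}^2$ is established in Proposition~\ref{prop31} \emph{before} Lemma~\ref{lemma32} is called, and Gr\"onwall with zero initial data already yields $Q(s)=0$ for every $s\in[0,T]$. Your core argument then applies at each fixed $s$. It would be cleaner, however, to state and prove the lemma in the pointwise-in-time form ``$Q(t)=0$ implies $f_1(\cdot,\cdot,t)=f_2(\cdot,\cdot,t)$ a.e.\ in $\Omega\times\mathbb{R}$'' and let the caller iterate over $s$; this is how the result is phrased in \cite{loeper1} and avoids any appearance of self-reference.
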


Similarly, we can prove
\begin{prop}\label{prop41}
Let $\Omega=\mathbb{R}$ and $T>0$. Assume that the initial data $(\rho_{0},u_{0},f_{0})$ satisfies $(\ref{a2})$. If $(\rho_{1},u_{1},f_{1})$ and $(\rho_{2},u_{2},f_{2})$ are two solutions to the IVP $(\ref{m1})$-$(\ref{kappa})$ satisfying $(\ref{r2})$-$(\ref{r21})$ on $\mathbb{R}\times\mathbb{R}\times[0,T]$ with the same initial data $(\rho_{0},u_{0},f_{0})$, then it holds $(\rho_{1},u_{1},f_{1})=(\rho_{2},u_{2},f_{2})$ a.e. in $\mathbb{R}\times\mathbb{R}\times (0,T)$.
\end{prop}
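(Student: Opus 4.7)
The argument mirrors the proof of Proposition \ref{prop31} (the periodic case), with adjustments for the whole-space setting $\Omega=\mathbb{R}$. The pointwise ingredients---bi-characteristic curves, the Vlasov representation formula, the Jacobian identity, and the pullback relation---transfer verbatim, as do Lemmas \ref{lemma31} and \ref{lemma32}, both already stated for $\Omega=\mathbb{R}$. I would follow the same four-step plan: introduce bi-characteristics, estimate the modulated energy $Q(t)$, derive $L^2$-estimates for the density and velocity differences, and close via Gr\"onwall.

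First, I introduce the bi-characteristics $(X_i(t),V_i(t))$, $i=1,2$, exactly as in $(\ref{XiVi})$, and define the modulated energy $Q(t)$ as in $(\ref{Xi})$ with the spatial integral taken over $\mathbb{R}\times\mathbb{R}$; this is finite since $f_0\in L^1(\mathbb{R}\times\mathbb{R})$ by $(\ref{a2})$ and has compact $v$-support. The ODE system is globally well posed by the $L^\infty$-bounds from $(\ref{r2})$--$(\ref{r21})$. Differentiating $Q(t)$ along the flow produces the same five-term decomposition $dQ/dt+\int\rho_1 f_0|V_1-V_2|^2=\sum_{i=1}^{5}I_i^1$ as before. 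The key auxiliary bound $f_0|V_i(t)|\leq\widetilde{R}_T f_0$, analogous to $(\ref{V1V2v})$, follows from the ODE for $V_i$ by a Gr\"onwall-in-time argument using $(\ref{a2})$ and the $\widetilde{R}_T$-bound in $(\ref{r21})$. Each $I_i^1$ is then estimated exactly as in the periodic case, with the time-dependent constants $\rho_T$, $\widetilde{R}_T$ replacing the uniform ones, and with the $H^2$-integrability of $u_2$ from $(\ref{r2})$ controlling $\|(u_2)_x\|_{L^\infty}$ in $L^2(0,T)$.

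Next, I derive $L^2$-estimates for $(\rho_1-\rho_2,u_1-u_2)$ by multiplying the difference continuity and momentum equations (the analogues of $(\ref{widetilderho})$--$(\ref{widetildeu})$) by $\rho_1-\rho_2$ and $u_1-u_2$ respectively, and integrating by parts over $\mathbb{R}$. Integration by parts is justified because $\rho_i-\widetilde{\rho},\ u_i\in C([0,T];H^1(\mathbb{R}))$ by $(\ref{r2})$, so the differences decay at infinity. The only substantive deviation from the periodic proof is in controlling $\|u_1-u_2\|_{L^\infty}$: in place of the zero-mean Poincar\'e-type trick employed in $(\ref{widetildeuinfty})$, I apply the Gagliardo--Nirenberg inequality $\|u_1-u_2\|_{L^\infty}\leq C\|u_1-u_2\|_{L^2}^{1/2}\|(u_1-u_2)_x\|_{L^2}^{1/2}$ together with the weighted bound $\|u_1-u_2\|_{L^2}\leq\rho_T^{1/2}\|\sqrt{\rho_1}(u_1-u_2)\|_{L^2}$ coming from $\rho_1\geq\rho_T^{-1}$, yielding the same structural inequality with $\rho_T$ in place of $\rho_{-}$. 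The estimates of $I_i^2$ and $I_i^3$, including the Hardy--Littlewood maximal treatment of $I_4^3$ via Lemma \ref{lemma31} on $\mathbb{R}$, then carry over without change.

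Combining the three resulting differential inequalities gives
\begin{equation*}
\frac{d}{dt}\Bigl(Q(t)+\|\sqrt{\rho_1}(u_1-u_2)(t)\|_{L^2}^2+\|(\rho_1-\rho_2)(t)\|_{L^2}^2\Bigr)\leq\Psi(t)\Bigl(Q(t)+\|\sqrt{\rho_1}(u_1-u_2)(t)\|_{L^2}^2+\|(\rho_1-\rho_2)(t)\|_{L^2}^2\Bigr),
\end{equation*}
where $\Psi(t)=C_T(1+\|(u_2)_x(t)\|_{L^\infty}^2+\|(u_2)_t(t)\|_{L^2}^2)\in L^1(0,T)$ by $(\ref{r2})$. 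Gr\"onwall's inequality together with $Q(0)=0$ and coincident initial data forces $(\rho_1,u_1)=(\rho_2,u_2)$ a.e., and Lemma \ref{lemma32} upgrades this to $f_1=f_2$. The main obstacle is the loss of the zero-mean estimate available on $\mathbb{T}$; I circumvent it by working directly with $u_1-u_2$, which belongs to $H^1(\mathbb{R})$ via the strictly positive lower bound $\rho_1\geq\rho_T^{-1}$. A secondary nuisance is that $\rho$ has infinite total mass on $\mathbb{R}$, but every potentially divergent integrand in the argument is weighted by $f_0$, whose total mass is finite by $(\ref{a2})$, so no spatial cutoff is needed.
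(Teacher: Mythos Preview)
Your proposal is correct and follows the same approach as the paper, which in fact offers no separate argument for Proposition~\ref{prop41} beyond the remark that it is proved ``similarly'' to Proposition~\ref{prop31}. Your roadmap correctly identifies the only genuine adjustments---replacing the torus interpolation $(\ref{widetildeuinfty})$ by the standard Gagliardo--Nirenberg inequality on $\mathbb{R}$ together with the lower bound $\rho_1\geq\rho_T^{-1}$, and trading the uniform constants $\rho_\pm,R_1$ for the time-dependent $\rho_T,\widetilde{R}_T$---and the remaining steps transfer verbatim as you describe.
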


\section{Appendix A: Local well-posedness}

In this Appendix, we prove the local well-posednes of strong solution to the IVP (\ref{m1})-(\ref{kappa}) for spatial periodic domain, and similar arguments can be applied to the case of spatial real line.
\begin{lemma} \label{lemma41}
Let $\Omega=\mathbb{T}$. Suppose that the initial data $(\rho_{0},u_{0},f_{0})$ satisfies
\begin{equation}\label{alocal}
\left\{
\begin{split}
&\inf_{x\in\mathbb{T}}\rho_{0}(x)>0,\quad \rho_{0}\in H^2(\mathbb{T} ),\quad u_{0}\in H^2(\mathbb{T} ),\\
&0\leq f_{0}\in C^{1}(\mathbb{T}\times\mathbb{R}),\quad \text{{\rm{Supp}}}_{v}f_{0}(x,\cdot)\subset \{v\in\mathbb{R}~\big{|}~|v|\leq  R_{0}\},\quad x\in\mathbb{T},
\end{split}
\right.
\end{equation}
where $R_{0}>0$ is a constant. Then there is a time $T_{*}>0$ such that the IVP $(\ref{m1})$-$(\ref{kappa})$ admits a unique strong solution satisfying
\begin{equation}\label{rlocal}
\left\{
\begin{split}
&\inf_{(x,t)\in \mathbb{T}\times[0,T_{*}]}\rho(x,t)>0,\quad \rho\in C([0,T_{*}];H^2(\mathbb{T} )),\\
&~~u\in C([0,T_{*}];H^2(\mathbb{T} ))\cap L^2(0,T_{*};H^3(\mathbb{T})),\\
&~~0\leq f\in C([0,T_{*}];C^{1}(\mathbb{T} \times\mathbb{R} )),\\
&~~\text{{\rm{Supp}}}_{v}f(x,\cdot,t)\subset \{v\in\mathbb{R}~\big{|}~|v|\leq  R_{0}+1\},\quad (x,t)\in\mathbb{T}\times[0,T_{*}].
\end{split}
\right .
\end{equation}
\end{lemma}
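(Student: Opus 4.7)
My plan is to build the local strong solution by a Picard-type iteration on the decoupled linearization of the system. Starting from $(\rho^{0}, u^{0}, f^{0}) \equiv (\rho_{0}, u_{0}, f_{0})$, given the $k$-th iterate $(\rho^{k}, u^{k}, f^{k})$ I define $(\rho^{k+1}, u^{k+1}, f^{k+1})$ as the unique solution of: (i) the transport equation $\rho^{k+1}_{t} + (\rho^{k+1} u^{k})_{x} = 0$ with $\rho^{k+1}|_{t=0} = \rho_{0}$; (ii) the linear parabolic equation
$\rho^{k} u^{k+1}_{t} + \rho^{k} u^{k} u^{k+1}_{x} - (\mu(\rho^{k}) u^{k+1}_{x})_{x} + \rho^{k} n^{k} u^{k+1} = -((\rho^{k})^{\gamma})_{x} + \rho^{k} n^{k} w^{k}$
with $u^{k+1}|_{t=0} = u_{0}$, where $(n^{k}, n^{k} w^{k})$ are the moments of $f^{k}$; and (iii) the linear Vlasov equation $f^{k+1}_{t} + v f^{k+1}_{x} + (\rho^{k}(u^{k} - v) f^{k+1})_{v} = 0$ with $f^{k+1}|_{t=0} = f_{0}$. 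The goal is to show that on a short common interval $[0, T_{*}]$ this iteration is well-defined, uniformly bounded in the regularity class of (\ref{rlocal}), and contracts in a weaker norm.

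Each linear subproblem is solved by standard techniques. The Vlasov equation for $f^{k+1}$ is integrated along its bi-characteristics $(X^{k}, V^{k})$ driven by the known drag, yielding the explicit representation
\begin{equation*}
f^{k+1}(x, v, t) = e^{\int_{0}^{t} \rho^{k}(X^{k}(s), s)\, ds} f_{0}(X^{k}(0), V^{k}(0)),
\end{equation*}
which preserves nonnegativity, and whose $C^{1}$ regularity is propagated by differentiating along characteristics exactly as in (\ref{fxfv})--(\ref{fw1inftycnsv}). The transport equation for $\rho^{k+1}$ is solved along the flow of $u^{k}$, preserving $H^{2}$ regularity and the strict positive lower bound for small $t$. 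The linear parabolic equation for $u^{k+1}$ has uniformly elliptic $H^{2}$ coefficients and bounded damping $\rho^{k} n^{k}$, so Galerkin approximation plus energy estimates mirroring Lemmas \ref{lemma23}--\ref{lemma24} supply a unique solution in $C([0, T_{*}]; H^{2}) \cap L^{2}(0, T_{*}; H^{3})$.

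Next I would prove uniform-in-$k$ bounds of the form
\begin{equation*}
\sup_{t \in [0, T_{*}]} \bigl(\|\rho^{k}(t)\|_{H^{2}} + \|u^{k}(t)\|_{H^{2}} + \|f^{k}(t)\|_{\mathcal{C}^{1}} + (\rho^{k}_{\min}(t))^{-1}\bigr) + \|u^{k}\|_{L^{2}(0, T_{*}; H^{3})} \leq M,
\end{equation*}
with $M$ depending only on the initial data and $T_{*} = T_{*}(M, \text{data}) > 0$ chosen so small that short-time factors absorb all nonlinear terms. Then, for the differences $\delta\rho^{k} = \rho^{k+1} - \rho^{k}$, $\delta u^{k} = u^{k+1} - u^{k}$, $\delta f^{k} = f^{k+1} - f^{k}$, I subtract consecutive linearized equations and control them in a lower-order norm (say $L^{2}$ for $\delta\rho^{k}$, $H^{1}$ for $\delta u^{k}$, together with a Loeper-type bi-characteristic quantity for $\delta f^{k}$ as in the uniqueness argument of Proposition \ref{prop31}). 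Running the same Gronwall machinery yields a contraction estimate on $[0, T_{*}]$ after possibly shrinking $T_{*}$. Passing to the limit along the resulting Cauchy sequence, the high-regularity bounds upgrade the weak limit to a strong solution with the regularity in (\ref{rlocal}); uniqueness on $[0, T_{*}]$ is immediate from the contraction.

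The main obstacle I expect is controlling the $v$-support of $f^{k+1}$ uniformly in $k$, because this compact support is the sole mechanism through which $n^{k}$ and $n^{k} w^{k}$ stay bounded in $L^{\infty}$, which in turn is required to close the parabolic estimate for $u^{k+1}$ and, ultimately, the iteration. Solving the $V^{k}$-ODE gives a bound of the shape $|V^{k}(t) - v| \leq C \int_{0}^{t}(|u^{k}(X^{k}, s)| + |V^{k}(s)|)\, ds$, and a Gronwall argument forces $T_{*}$ to be so small that $R_{0}$ inflates to at most $R_{0} + 1$ under the flow. This constraint, together with the analogous short-time condition needed to propagate the positive lower bound on $\rho^{k+1}$ through the continuity equation and to preserve the elliptic closure constant for $u^{k+1}$, dictates the final choice of $T_{*}$ and must be checked to be simultaneously achievable.
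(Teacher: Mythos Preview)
Your proposal is correct and follows the same Picard-iteration architecture as the paper: linearize, show the iterates stay in a fixed work space on a short interval, prove a contraction in a lower norm, and pass to the limit. Two minor differences are worth noting. First, your linearized momentum equation keeps the convection $\rho^{k}u^{k}u^{k+1}_{x}$ and the damping $\rho^{k}n^{k}u^{k+1}$ semi-implicit, whereas the paper's iteration (\ref{m1n}) writes the $u$-equation in the fully explicit form $(u^{n+1})_{t}-\rho^{-1}_{n}(\mu(\rho^{n})u^{n+1}_{x})_{x}=-u^{n}u^{n}_{x}-\gamma(\rho^{n})^{\gamma-2}\rho^{n}_{x}-\int(u^{n}-v)f^{n}dv$; both choices are standard and close the $H^{2}$ bound, though yours gives a slightly cleaner energy identity at the cost of an extra commutator in the difference estimate. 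Second, for the Vlasov difference you invoke the Loeper-type bi-characteristic quantity from Proposition~\ref{prop31}, while the paper simply estimates $\|f^{n+1}-f^{n}\|_{\mathcal{C}^{0}}$ directly: at the iteration level $(\rho^{k},u^{k})\in C([0,T];H^{2})\hookrightarrow C^{1}$, so the characteristics are classically differentiable and the $\mathcal{C}^{0}$ (or any $\mathcal{L}^{p}$) difference is controlled by a straightforward Gr\"onwall argument along characteristics. Your Loeper machinery is not wrong here, only heavier than necessary; the paper's contraction is carried out in $H^{1}\times H^{1}\times\mathcal{C}^{0}$. Your identification of the two short-time constraints---velocity-support inflation to at most $R_{0}+1$ and preservation of the lower bound on $\rho^{k+1}$---matches exactly what drives the choice of $T_{*}$ in the paper.
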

\begin{proof} \emph{Step 1: Construction of approximate sequence.} The iteration scheme for the approximate solutions $(\rho^{n+1},u^{n+1},f^{n+1})$, $n\geq 0$, is defined by solving the following equations:
 \begin{equation}\label{m1n}
\left\{
 \begin{split}
 &(\rho^{n+1})_{t}+(u^{n}\rho^{n+1})_{x}=0,\\
 & (u^{n+1})_{t}-\frac{1}{\rho^{n}}(\mu(\rho^{n})(u^{n+1})_{x})_{x}=-u^{n}(u^{n})_{x}-\gamma(\rho^{n})^{\gamma-2}(\rho^{n})_{x}-\int_{\mathbb{R}}(u^{n}-v)f^{n}dv,\\
& (f^{n+1})_{t}+v(f^{n+1})_{x}+(\rho^{n}(u^{n}-v)f^{n+1})_{v}=0,\quad (x,v)\in \mathbb{T}\times\mathbb{R},~t>0,
\end{split}
\right .
\end{equation}
with the initial data
\begin{align}
&(\rho^{n+1}(x,0),u^{n+1}(x,0),f^{n+1}(x,v,0))=(\rho_{0}(x),u_{0}(x),f_{0}(x,v)), \quad (x,v)\in\mathbb{T}\times\mathbb{R}.\label{dn}
\end{align}
Define the work space
\begin{equation}\nonumber
\begin{split}
&\mathcal{W}(T)=:\Big{\{}(\rho,u,f)~\big{|}~(\rho,u) \in C([0,T];H^2(\mathbb{T} )),\quad  0\leq f\in C([0,T];C^{1}(\mathbb{T} \times\mathbb{R} )),\\
 &\quad \quad\quad \quad \quad \quad\quad \quad\quad~\frac{1}{2}\rho_{0-}\leq \rho(x,t)\leq 2\rho_{0+},\quad (x,t)\in\mathbb{T}\times[0,T],\\
 &\quad \quad\quad \quad \quad \quad\quad \quad\quad~\text{{\rm{Supp}}}_{v}f^{n}(x,\cdot,t)\subset \{v\in\mathbb{R}~\big{|}~|v|\leq  R_{0}+1\},\quad (x,t)\in\mathbb{T}\times[0,T],\\
 &\quad \quad\quad \quad \quad \quad\quad \quad \quad\sup_{t\in[0,T]}\big{(}\|(\rho,u)(t)\|_{H^2}^2+\|f(t)\|_{\mathcal{C}^1}\big{)}+\frac{1}{2\rho_{0+}}\|u_{x}\|_{L^2(0,T;H^2)}^2\leq 2E_{0}+1.\Big{\}},
\end{split}
\end{equation}
where $\rho_{0+}>0, \rho_{0-}>0$ and $E_{0}>0$ are constants given by
 \begin{equation}\nonumber
 \begin{split}
 &\rho_{0-}=:\inf_{x\in\mathbb{T}}\rho_{0}(x),\quad \rho_{0+}=:\sup_{x\in\mathbb{T}}\rho_{0}(x),\quad E_{0}=:\|(\rho_{0},u_{0})\|_{H^2}^2+\|f_{0}\|_{\mathcal{C}^1}.
 \end{split}
 \end{equation}
Set $(\rho^{0},u^{0},f^{0})=: (\rho_{0},u_{0},f_{0})$. Suppose
\begin{align}
(\rho^{n},u^{n},f^{n})\in\mathcal{W}(T_{0}),\label{n}
\end{align}
for some time $T_{0}>0$. By the standard theory of linear O.D.E. systems \cite{hsi1}, for given $u^{n}\in C([0,T_{0}];H^2(\mathbb{T}))\hookrightarrow C([0,T_{0}];C^{1}(\mathbb{T}))$, there is a unique particle path $\mathcal{X}_{n}^{x,t}(s)$ for any $(x,t)\in \mathbb{T}\times[0,T_{0}]$ defined by
 \begin{equation}\nonumber
 \left\{
 \begin{split}
 &\frac{d}{ds}\mathcal{X}_{n}^{x,t}(s)=u^{n}(\mathcal{X}_{n}^{x,t}(s),s),\quad s\in[0,t],\\
 &~\mathcal{X}_{n}^{x,t}(t)=x.
 \end{split}
 \right.
 \end{equation}
Thus, the linear transport equation $(\ref{m1n})_{1}$ can be solved by
 \begin{equation}\label{rhon1}
 \begin{split}
 \rho^{n+1}(x,t)=e^{-\int_{0}^{t} (u^{n})_{x}(\mathcal{X}_{n}^{x,t}(s),s)ds}\rho_{0}(\mathcal{X}_{n}^{x,t}(0)).
 \end{split}
 \end{equation}
Similarly, for given $(\rho^{n},u^{n})\in C([0,T_{0}];H^2(\mathbb{T}))\hookrightarrow C([0,T_{0}];C^{1}(\mathbb{T}))$, we can define uniquely the bi-characteristic curves
\begin{equation}\label{XVn}
\left\{
\begin{split}
&\frac{d}{ds}X_{n}^{x,v,t}(s)=V_{n}^{x,v,t}(s),\quad s\in [0,t],\\
&\quad X_{n}^{x,v,t}(t)=x,\\
&\frac{d}{ds}V_{n}^{x,v,t}(s)=\rho^{n}(X_{n}^{x,v,t}(s),s) \big{(}u^{n}(X_{n}^{x,v,t}(s),s)-V_{n}^{x,v,t}(s)\big{)},\quad s\in [0,t],\\
&\quad V_{n}^{x,v,t}(t)=v,
\end{split}
\right .
\end{equation}
for any $(x,v,t)\in \mathbb{T}\times\mathbb{R}\times[0,T_{0}]$, so that the solution $f^{n+1}$ for the linear transport equation $(\ref{m1n})_{3}$ can be represented by
\begin{align}
f^{n+1}(x,v,t)=e^{\int_{0}^{t}\rho^{n}(X_{n}^{x,v,t}(s),s)ds}f_{0}(X_{n}^{x,v,t}(0),V_{n}^{x,v,t}(0))\geq 0.\label{fn1}
\end{align}
Finally, by the standard theorem of linear parabolic equations \cite{jllions1,valli1}, $(\ref{m1n})_{2}$ admits a unique solution $u^{n+1}\in C([0,T_{0}];H^2(\mathbb{T}))\cap L^2(0,T_{0};H^3(\mathbb{T}))$.

 \emph{Step~2: Uniform estimates.} We claim that there is a time $T_{0}>0$ such that for each $n\geq 0$, if $(\rho^{n},u^{n},f^{n})$ satisfies (\ref{n}), then it is also true for $(\rho^{n+1},u^{n+1},f^{n+1})$. Indeed, by (\ref{n})-(\ref{rhon1}), one has
  \begin{equation}\label{upperrhon}
 \begin{split}
& 0<e^{-NE_{0}^{\frac{1}{2}}T_{0}}\rho_{0-}\leq \rho^{n+1}(x,t)\leq e^{NE_{0}^{\frac{1}{2}}T_{0}}\rho_{0+},\quad (x,t)\in \mathbb{T}\times[0,T_{0}],
 \end{split}
 \end{equation}
where $N>0$ denotes a a suitably large constant depending only on $\rho_{0+},\rho_{0-}$ and $R_{0}$. In addition, the equation $(\ref{m1n})_{1}$ implies
    \begin{equation}\label{nablarhon}
 \begin{split}
 &\partial_{t}\sum_{i=0}^{2}\partial_{x}^{i}\rho^{n+1}+u^{n}\sum_{i=0}^{2}\partial_{x}^{i+1}\rho^{n+1}\\
 &\quad=-\sum_{i=0}^{2}\partial_{x}^{i}(\rho^{n+1} (u^{n})_{x})-(u^{n})_{x}(\rho^{n+1})_{x}-2(u^{n})_{x}(\rho^{n+1})_{xx}-(u^{n})_{xx}(\rho^{n+1})_{x}.
  \end{split}
 \end{equation}
 We multiply (\ref{nablarhon}) by $\sum_{i=0}^{2}\partial_{x}^{i}\rho^{n+1}$ and integrate the resulted equation by parts over $\mathbb{T}$ to get
  \begin{equation}\label{nablarhon1}
 \begin{split}
 &\frac{1}{2}\frac{d}{dt}\|\rho^{n+1}(t)\|_{H^2}^2\\
 &~~\leq \frac{1}{2}\|(u^{n})_{x}(t)\|_{L^{\infty}}\|\rho^{n+1}(t)\|_{H^2}^2+N\|(\rho^{n+1} (u^{n})_{x})(t)\|_{H^2}\|\rho^{n+1}(t)\|_{H^2}\\
 &~~\leq N\|(u^{n})_{x}(t)\|_{H^2}\|\rho^{n+1}(t)\|_{H^2}^2.
   \end{split}
 \end{equation}
Thus, it follows from (\ref{n}), (\ref{nablarhon1}) and the Gr${\rm{\ddot{o}}}$nwall's inequality that
   \begin{equation}\label{rhonH2}
 \begin{split}
 &\sup_{t\in[0,T_{0}]}\|\rho^{n+1}(t)\|_{H^2}^2\leq e^{N\|(u^{n})_{x}\|_{L^1(0,T;H^2)}}\|\rho_{0}\|_{H^2}^2\leq e^{N E_{0}^{\frac{1}{2}}T_{0}^{\frac{1}{2}}}\|\rho_{0}\|_{H^2}^2.
  \end{split}
 \end{equation}

We are ready to estimate $u^{n+1}$. By $(\ref{m1n})_{2}$ and (\ref{n}), it is easy to obtain
       \begin{equation}\label{uL2}
 \begin{split}
 &\frac{1}{2}\frac{d}{dt}\|u^{n+1}(t)\|_{H^1}^2+\frac{1}{2\rho_{0+}}\|(u^{n+1})_{x}(t)\|_{H^1}^2\\
 &~~\leq N\big{(}E_{0}^{\frac{1}{2}}\|(u^{n+1})_{x}(t)\|_{L^2}\|u^{n+1}(t)\|_{L^2}+(E_{0}^{\frac{1}{2}}+1)E_{0}^{\frac{1}{2}}\|u^{n+1}(t)\|_{L^2}\big{)}.
     \end{split}
 \end{equation}
Then we differentiate $(\ref{m1})_{2}$ with respect to $x$ to have
        \begin{equation}\label{unxxx}
 \begin{split}
 & (u^{n+1})_{xt}-\frac{\mu(\rho^{n})}{\rho^{n}}(u^{n+1})_{xxx}=(\frac{\mu'(\rho^{n})}{\rho^{n}}(\rho^{n})_{x}(u^{n+1})_{x})_{x}+(\frac{\mu(\rho^{n})}{\rho^{n}})_{x}(u^{n+1})_{xx}\\
 &\quad\quad\quad\quad\quad\quad\quad\quad\quad\quad\quad\quad+\big{(}-u^{n}(u^{n})_{x}-\gamma(\rho^{n})^{\gamma-2}(\rho^{n})_{x}-\int_{\mathbb{R}}(u^{n}-v)f^{n}dv\big{)}_{x}.
     \end{split}
 \end{equation}
Multiplying $(\ref{unxxx})$ by $-(u^{n+1})_{xxx}$, integrating the resulted equation by parts over $\mathbb{T}$, and making use of (\ref{n}), we derive
  \begin{equation}\nonumber
 \begin{split}
  &\frac{1}{2}\frac{d}{dt}\|(u^{n+1})_{xx}(t)\|_{L^2}^2+\frac{1}{2\rho_{0+}}\|(u^{n+1})_{xxx}(t)\|_{L^2}^2\\
  &~~\leq N\Big{(}\|(\rho^{n})_{x}(t)\|_{L^{\infty}}\|(u^{n+1})_{xx}(t)\|_{L^2}+\|(\rho^{n})_{xx}(t)\|_{L^{2}}\|(u^{n+1})_{x}(t)\|_{L^{\infty}}+\|u^{n}(t)\|_{H^1}\|(u^{n})_{x}(t)\|_{H^1}\\
    &\quad~~+\|(\rho^{n})_{x}(t)\|_{H^1}+\big{(}1+\|u^{n}(t)\|_{H^1}\big{)}\|\int_{\mathbb{R}}(1+|v|)f^{n}(x,\cdot,t)dv\|_{H^1}\Big{)}\|(u^{n+1})_{xxx}(t)\|_{L^2}\\
 &~~\leq \frac{1}{4\rho_{0+}}\|(u^{n+1})_{xxx}(t)\|_{L^2}^2+ N E_{0}\|u^{n+1}(t)\|_{H^2}^2+N(1+E_{0})E_{0},
     \end{split}
 \end{equation}
which together with (\ref{uL2}) gives rise to
   \begin{equation}\label{un1H2}
 \begin{split}
 &\sup_{t\in[0,T_{0}]}\|u^{n+1}(t)\|_{H^2}^2+\frac{1}{2\rho_{0+}}\|(u^{n+1})_{x}\|_{L^2(0,T_{0};H^2)}^2\\
 &\quad\leq e^{NE_{0}T_{0} }\big{(}\|u_{0}\|_{H^2}^2+N(1+E_{0})E_{0}T_{0}\big{)}.
     \end{split}
 \end{equation}

To estimate $f^{n+1}$, we obtain by (\ref{XVn}) that
 \begin{equation}\label{xv123}
\begin{split}
&v=e^{-\int_{0}^{t}\rho^{n}(X_{n}^{x,v,t}(\tau),\tau)d\tau}V_{n}^{x,v,t}(0)+\int_{0}^{t}e^{-\int_{\tau}^{t}\rho^{n}(X_{n}^{x,v,t}(\omega),\omega)d\omega}\rho^{n} u^{n}(X_{n}^{x,v,t}(\tau),\tau)d\tau.
\end{split}
\end{equation}
It follows from $(\ref{alocal})_{2}$, (\ref{n}), (\ref{fn1}) and (\ref{xv123}) for any $(x,t)\in\mathbb{T}\times [0,T_{0}]$ that
\begin{align}
&\sup_{\{v\in\mathbb{R}~|~f^{n+1}(x,v,t)\neq 0\}}|v|\leq R_{0}+2\rho_{0+}\|u^{n}\|_{L^{\infty}(0,T_{0};L^{\infty})}T_{0}\leq R_{0}+N E_{0}^{\frac{1}{2}}T_{0},
\end{align}
from which we get
   \begin{align}
  &\text{{\rm{Supp}}}_{v}f^{n+1}(x,\cdot,t)\subset \{v\in\mathbb{R}~\big{|}~|v|\leq  R_{0}+N E_{0}T_{0}\},\quad (x,t)\in\mathbb{T}\times[0,T_{0}].\label{fn1compact}
     \end{align}
Similarly to (\ref{fxfv})-(\ref{fw1inftycnsv}), one has by a direct computation that
\begin{equation}\label{fw1inftycnsvppp}
\begin{split}
\sup_{t\in[0,T_{0}]}\|f^{n+1}(t)\|_{\mathcal{C}^{1}}\leq e^{N E_{0}^{\frac{1}{2}}T_{0}}\big{(}\|f_{0}\|_{\mathcal{C}^{1}}+N(1+E_{0}^{\frac{1}{2}})E^{\frac{1}{2}}_{0}T_{0}\big{)}.
\end{split}
\end{equation}
Choose
     \begin{equation}\label{T0}
     \left\{
 \begin{split}
 &M_{0}=: N \max\big{\{} (1+E_{0})E_{0},(1+E_{0}^{\frac{1}{2}})E^{\frac{1}{2}}_{0}\big{\}} ,\\
 &T_{0}=:\min\big{\{}\frac{\log{2}}{NM_{0}},\frac{(\log{2})^2}{N^2M_{0}^2},\frac{1}{2NM_{0}}\big{\}}.
     \end{split}
     \right.
 \end{equation}
Then by the previous estimates (\ref{fn1})-(\ref{upperrhon}), (\ref{rhonH2}), (\ref{un1H2}) and (\ref{fn1compact})-(\ref{fw1inftycnsvppp}), we conclude that  for each $n\geq 0$, the strong solution $(\rho^{n+1},u^{n+1},f^{n+1})$ to the IVP (\ref{m1n})-(\ref{dn}) belongs to $\mathcal{W}(T_{0})$ as long as $(\rho^{n},u^{n},f^{n})$ satisfies (\ref{n}). By repeating the procedure used above, we can obtain an approximate sequence $(\rho^{n},u^{n},f^{n})$ satisfying the uniform estimates in $\mathcal{W}(T_{0})$.

  \emph{Step 3: Compactness and convergence.} Let $M_{0}>0$ and $T_{0}>0$ be given by (\ref{T0}).  We aim to prove the convergence of the approximate sequence $(\rho^{n},u^{n},f^{n})$ to the corresponding strong solution of the original IVP (\ref{m1})-(\ref{kappa}). Since $(\rho^{n},u^{n},f^{n})$ is uniformly bounded in $\mathcal{W}(T_{0})$, one can show after a tedious computation that
 \begin{equation}\nonumber
 \begin{split}
 &\sup_{\tau\in [0,t]}\Big{(}\|(\rho^{n+1}-\rho^{n}, u^{n+1}-u^{n})(\tau)\|_{H^1}^2+\|(f^{n+1}-f^{n})(\tau)\|_{\mathcal{C}^{0}}\Big{)}+\|(u^{n+1}-u^{n})_{x}\|_{L^2(0,t;H^1)}^2\\
 &\quad\leq N_{*} M_{0}t\sup_{\tau\in [0,t]}\Big{(}\|(\rho^{n}-\rho^{n-1}, u^{n}-u^{n-1})(\tau)\|_{H^1}^2+\|(f^{n}-f^{n-1})(\tau)\|_{\mathcal{C}^{0}}\Big{)},\quad t\in[0,T_{0}],
 \end{split}
 \end{equation}
 where $N_{*}>0$ is a constant depending only on $\rho_{0+},\rho_{0-}$ and $R_{0}$. Choosing
$$
 T_{*}=\min\big{\{}\frac{1}{2N_{*}M_{0}},T_{0}\big{\}},
$$
 we have
 \begin{equation}\nonumber
 \begin{split}
&\sum_{n=0}^{\infty} \sup_{\tau\in [0,T_{*}]}\Big{(}\|(\rho^{n+1}-\rho^{n}, u^{n+1}-u^{n})(\tau)\|_{H^1}^2+\|(f^{n+1}-f^{n})(\tau)\|_{\mathcal{C}^{0}}\Big{)}\\
&\quad\leq \sum_{n=0}^{\infty} \frac{1}{2^{n}} \sup_{\tau\in [0,T_{*}]}\Big{(}\|(\rho^{1}-\rho^{0}, u^{1}-u^{0})(\tau)\|_{H^1}^2+\|(f^{1}-f^{0})(\tau)\|_{\mathcal{C}^{0}}\Big{)}<\infty,
 \end{split}
 \end{equation}
 which implies that there is a limit $(\rho,u,f)$ so that as $n\rightarrow\infty$, it holds
 \begin{equation}\label{strongn}
 \left\{
 \begin{split}
 &(\rho^{n},u^{n})\rightarrow(\rho,u)\quad \text{in}\quad L^{\infty}(0,T_{*};H^1(\mathbb{T}))\times L^{\infty}(0,T_{*};H^1(\mathbb{T})),\\
 &f^{n}\rightarrow f\quad \text{in}\quad L^{\infty}(0,T_{*};C^{0}(\mathbb{T}\times\mathbb{R})).
 \end{split}
 \right.
 \end{equation}
 Due to the estimates of $(\rho^{n},u^{n},f^{n})$ uniformly in $n\geq0$, as $n\rightarrow \infty$, it holds up to a subsequence (still denoted by $(\rho^{n},u^{n},f^{n})$) that
 \begin{equation}\label{weakn}
 \left\{
 \begin{split}
 &(\rho^{n},f^{n}) \overset{\ast}{\rightharpoonup} (\rho,f) \quad\text{in}\quad L^{\infty}(0,T_{*};H^2(\mathbb{T}))\times L^{\infty}(0,T_{*};W^{1,\infty}(\mathbb{T}\times\mathbb{R})),\\
 &u^{n} \rightharpoonup u\quad\text{in}\quad L^2(0,T_{*};H^3(\mathbb{T})).
 \end{split}
 \right.
 \end{equation}
Thus, we can pass to the limit $n\rightarrow\infty$ for the equations $(\ref{m1n})$ in the sense of distributions and prove that $(\rho,u,f)$ is indeed a strong solution to the IVP $(\ref{m1})$-$(\ref{dn})$ on $\mathbb{T}\times\mathbb{R}\times[0,T_{*}]$. 

In addition, applying the theory of renormalized solutions in \cite{diperna1}, we have $\rho\in C([0,T_{*}];H^1(\mathbb{T}))$. By $(\ref{m1})_{2}$, (\ref{strongn})-(\ref{weakn}) and the uniform estimate of $(\rho^{n},u^{n},f^{n})$, one can show $u_{t}\in L^2(0,T_{*}; H^1(\mathbb{T}))$, and therefore $u\in C([0,T_{*}];H^2(\mathbb{T}))$ follows from $u\in L^2(0,T_{*};H^3(\mathbb{T}))\cap H^1(0,T_{*};H^1(\mathbb{T}))$. Moreover, with the help of $(\rho,u) \in C([0,T_{*}];H^2(\mathbb{T}))\hookrightarrow C([0,T_{*}];C^{1}(\mathbb{T}))$, it is easy to verify the $C^1$-regularity of the bi-characteristic curves $(X^{x,v,t}(s),V^{x,v,t}(s))$ defined by (\ref{XV}) with respect to  $(x,v)\in\mathbb{T}\times\mathbb{R}$, which together with (\ref{fformula}) implies $f\in C([0,T_{*}];C^{1}(\mathbb{T}\times\mathbb{R}))$. According to Proposition \ref{prop31}, the strong solution $(\rho,u,f)$ is unique. The proof of Lemma \ref{lemma41} is completed.
\end{proof}

Similarly, we have
\begin{lemma} \label{lemma42}
Let $\Omega=\mathbb{R}$. Suppose that the initial data $(\rho_{0},u_{0},f_{0})$ satisfies
\begin{equation}\nonumber
\left\{
\begin{split}
&\inf_{x\in\mathbb{R}}\rho_{0}(x)>0,\quad \rho_{0}-\widetilde{\rho}\in H^2(\mathbb{R} ),\quad u_{0}\in H^2(\mathbb{R} ),\\
&0\leq f_{0}\in L^1\cap L^{\infty}\cap H^1\cap C^{1}(\mathbb{R} \times\mathbb{R}),\quad \text{{\rm{Supp}}}_{v}f_{0}(x,\cdot)\subset \{v\in\mathbb{R}~\big{|}~|v|\leq  \widetilde{R}_{0}\},\quad x\in\mathbb{R},
\end{split}
\right.
\end{equation}
where $\widetilde{\rho}>0$ and $\widetilde{R}_{0}>0$ are two constants. Then there is a time $T_{**}>0$ such that the IVP $(\ref{m1})$-$(\ref{kappa})$ admits a unique strong solution satisfying
\begin{equation}\nonumber
\left\{
\begin{split}
&\inf_{(x,t)\in \mathbb{R}\times[0,T_{**}]}\rho(x,t)>0,\quad \rho-\widetilde{\rho}\in C([0,T_{**}];H^2(\mathbb{R} )),\\
&~~u\in C([0,T_{**}];H^2(\mathbb{R} ))\cap L^2(0,T_{**};H^3(\mathbb{R})),\\
&~~0\leq f\in C([0,T_{**}];L^1\cap H^1\cap C^{1}(\mathbb{R} \times\mathbb{R} ))\cap L^{\infty}(0,T_{**},L^{\infty}(\mathbb{R} \times\mathbb{R} )),\\
& ~~\text{{\rm{Supp}}}_{v}f(x,\cdot,t)\subset \{v\in\mathbb{R}~\big{|}~|v|\leq  \widetilde{R}_{0}+1\},~(x,t)\in\mathbb{R}\times [0,T_{**}].
\end{split}
\right .
\end{equation}
\end{lemma}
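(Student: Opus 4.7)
The plan is to mimic the proof of Lemma \ref{lemma41} with appropriate modifications for the unbounded domain. First I would set up the iteration scheme by replacing $(\rho^{n+1},u^{n+1},f^{n+1})$ in the system \eqref{m1n} with the same equations posed on $\mathbb{R}\times\mathbb{R}$, and taking the initial data as in \eqref{dn}. The work space is modified to
\begin{equation*}
\begin{split}
\widetilde{\mathcal{W}}(T)=\Big\{&(\rho,u,f)~\big{|}~\tfrac{1}{2}\widetilde{\rho}_{0-}\leq \rho\leq 2\widetilde{\rho}_{0+},\quad (\rho-\widetilde{\rho},u)\in C([0,T];H^2(\mathbb{R})),\\
& 0\leq f\in C([0,T];L^1\cap H^1\cap C^1(\mathbb{R}\times\mathbb{R}))\cap L^\infty(0,T;L^\infty(\mathbb{R}\times\mathbb{R})),\\
& \text{{\rm{Supp}}}_v f(x,\cdot,t)\subset\{|v|\leq \widetilde{R}_{0}+1\},\\
&\sup_{t\in[0,T]}\big(\|(\rho-\widetilde{\rho},u)(t)\|_{H^2}^2+\|f(t)\|_{L^1\cap H^1\cap\mathcal{C}^1}\big)+\tfrac{1}{2\widetilde{\rho}_{0+}}\|u_x\|_{L^2(0,T;H^2)}^2\leq 2\widetilde{E}_0+1\Big\},
\end{split}
\end{equation*}
with $\widetilde{\rho}_{0\pm}=\sup/\inf \rho_0$, and take $(\rho^0,u^0,f^0)=(\rho_0,u_0,f_0)$.

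Then I would solve the three linear sub-problems exactly as before: $\rho^{n+1}$ via the transport formula along the flow of $u^n$, $f^{n+1}$ via the formula \eqref{fn1} along the bi-characteristics \eqref{XVn}, and $u^{n+1}$ via the standard theory for linear parabolic equations with variable coefficients on $\mathbb{R}$. The uniform estimates in $\widetilde{\mathcal{W}}(T_0)$ for a small $T_0>0$ then proceed in parallel to Step 2 of Lemma \ref{lemma41}: the upper/lower bounds on $\rho^{n+1}$ follow from the representation formula and the $L^\infty$ bound on $u^n_x$ inherited from $H^2(\mathbb{R})$; the $H^2$-estimate of $\rho^{n+1}-\widetilde{\rho}$ follows by differentiating $(\rho^{n+1}-\widetilde{\rho})_t+(u^n(\rho^{n+1}-\widetilde{\rho}))_x=-\widetilde{\rho}u^n_x$ and integrating against $\sum_{i=0}^2\partial_x^i(\rho^{n+1}-\widetilde{\rho})$; the $H^2$-estimate of $u^{n+1}$ comes from multiplying the divided momentum equation by $u^{n+1}$ and $-u^{n+1}_{xxx}$ and using the Gagliardo-Nirenberg inequalities on $\mathbb{R}$; and the compact-support propagation plus $L^1\cap H^1\cap C^1$ estimate of $f^{n+1}$ follow from \eqref{fn1} combined with the linearized analogues of \eqref{fxfv}--\eqref{fxfvH1}. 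Choosing $T_0>0$ as in \eqref{T0} (with $\widetilde{E}_0$ in place of $E_0$) closes the induction.

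For convergence, I would compute the difference system for $(\rho^{n+1}-\rho^n,u^{n+1}-u^n,f^{n+1}-f^n)$ in the weaker norm $L^\infty_t(H^1\times H^1\times C^0\cap L^1)$, yielding a Gronwall-type contraction on a possibly smaller interval $[0,T_{**}]$. Extracting the strong limit $(\rho,u,f)$, passing to weak-$\ast$ limits in the top-order spaces, and applying the DiPerna-Lions theory of renormalized solutions for $\rho-\widetilde{\rho}$ gives the time continuity in $H^2(\mathbb{R})$; the $C^1$-regularity of the bi-characteristics combined with \eqref{fformula} yields $f\in C([0,T_{**}];C^1(\mathbb{R}\times\mathbb{R}))$, and propagation of $L^1\cap H^1$ regularity along these characteristics (together with the compactness of $\mathrm{Supp}_v f$) gives the stated regularity of $f$. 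Uniqueness follows from Proposition \ref{prop41}.

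The hard part will be handling the noncompact spatial domain in two places. First, in the a priori estimates of $u^{n+1}$, the Poincar\'e-type bound used implicitly on $\mathbb{T}$ is unavailable, so I must control $\|u^{n+1}\|_{L^2(\mathbb{R})}$ directly from the momentum equation and exploit the damping provided by $-\mu(\rho^n)/\rho^n\cdot u^{n+1}_{xx}$ together with the $\widetilde{E}_0$ bound. Second, in propagating the $L^1\cap H^1$ regularity of $f^{n+1}$, the argument must account for the fact that integrability in $x$ is not automatic from the support condition (which is only in $v$), forcing a direct energy estimate on the Vlasov equation using the divergence structure $\partial_v(\rho^n(u^n-v))=-\rho^n$ as well as the linear analogues of \eqref{fxddt}--\eqref{ft}.
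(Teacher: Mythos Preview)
Your proposal is correct and matches the paper's approach: the paper gives no independent proof of Lemma~\ref{lemma42} but simply prefaces it with ``Similarly, we have'', deferring entirely to the argument of Lemma~\ref{lemma41}. The modifications you spell out (working with $\rho-\widetilde\rho$ in $H^2(\mathbb{R})$, adding the $L^1\cap H^1$ norm of $f$ to the workspace, and replacing Poincar\'e-type bounds by direct $L^2$ energy estimates for $u^{n+1}$) are exactly the adjustments needed and go somewhat beyond what the paper itself records.
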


\section{Appendix B: exponential time-decay rate}

In this Section, we establish the a-priori estimates of $\rho$ uniformly in time and dynamical behaviors of the spatial periodic solution $(\rho, u,f)$ to the IVP (\ref{m1})-(\ref{kappa}) to the corresponding equilibrium state $(\overline{\rho_{0}}, u_{s},n\delta(v-u_{s}))$ in the terms of the relative entropy, where the constants $\overline{\rho_{0}}$ and $u_{s}$ are given by (\ref{rhoinfty}). First, we recall the definition of Wasserstein distance \cite{villani1} as follows:

 \begin{defn}\label{defn11}{\textit{
 Let $f_{1},f_{2}$ be two Borel probability measures on $X=\mathbb{T}\times\mathbb{R}$. For $p\geq 1$, the Wasserstein distance of order $p$ between $f_{1}$ and $f_{2}$ is defined by the formula
\begin{equation}\nonumber
\begin{split}
{\rm{W}}_{p}(f_{1},f_{2})=:(\inf_{\varpi\in \Gamma(f_{1},f_{2})}\int_{X\times X}|z-z'|^{p}d\varpi(z,z'))^{\frac{1}{p}},
\end{split}
\end{equation}
where $\Gamma(f_{1},f_{2})$ denotes the collection of all measures on $X\times X$ with first and second marginal respectively equal to $f_{1}$ and $f_{2}$. By the Monge-Kantorovitch duality, the Wasserstein distance of order $1$ can be represented equivalently by the formula
\begin{equation}\nonumber
\begin{split}
{\rm{W}}_{1}(f_{1},f_{2})=\sup\Big{\{}\int_{X}\phi(z)df_{1}(z)-\int_{X}\phi(z)df_{2}(z): \forall\phi\in Lip(X),~\|\nabla \phi\|_{L^{\infty}(X)}\leq 1\Big{\}}.
\end{split}
\end{equation}}}
 \end{defn}
~\par

Introduce the relative entropy and corresponding dissipation rate:
\begin{equation}\label{ED}
 \left\{
\begin{split}
&\mathcal{E}(t)=:\int_{\mathbb{T}}\big{(}\frac{1}{2}\rho|u-m_{1}|^2+\Pi_{\gamma}(\rho|\overline{\rho_{0}})\big{)}(x,t)dx+\int_{\mathbb{T}\times\mathbb{R}}\frac{1}{2}|v-m_{2}(t)|^2f(x,v,t)dvdx\\
&\quad\quad~~+\frac{\overline{\rho_{0}}~\overline{n_{0}}}{2(\overline{n_{0}}+\overline{\rho_{0}})}|(m_{2}-m_{1})(t)|^2,\\ &\mathcal{D}(t)=:\int_{\mathbb{T}}(\mu(\rho)|u_{x}|^2)(x,t)dx+\int_{\mathbb{T}\times\mathbb{R}}\rho(x,t)|u(x,t)-v|^2f(x,v,t)dvdx.
\end{split}
\right.
\end{equation}
where $m_{1}(t)$ and $\Pi_{\gamma}(\rho|\overline{\rho_{0}})$ are denoted by (\ref{m1m2}), and $m_{2}(t)$ is given by
\begin{equation}\nonumber
\begin{split}
m_{2}(t)=:\frac{\overline{nw}(t)}{\overline{n_{0}}}.
\end{split}
\end{equation}
We have the relative energy equality:
\begin{lemma}\label{lemma51}
Let $(\rho,u,f)$ be the global solution to the IVP $(\ref{m1})$-$(\ref{kappa})$ given by Theorem \ref{theorem11}. Then, under the assumptions of Theorem 1.1, it holds
\begin{equation}\label{entropydissipation}
\begin{split}
\frac{d}{dt}\mathcal{E}(t)+\mathcal{D}(t)=0.
\end{split}
\end{equation}
\end{lemma}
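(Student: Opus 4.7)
The plan is to differentiate $\mathcal{E}(t)$ term by term, exploit mass and total momentum conservation to cancel all terms involving $m_1'(t)$ and $m_2'(t)$, and finally invoke the basic energy identity from Lemma \ref{lemma21} to identify the dissipation.

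\textbf{Step 1 (Conservation laws).} From Lemma \ref{lemma21} we already have $\int_\mathbb{T}\rho\,dx=\overline{\rho_0}$ and $\int_{\mathbb{T}\times\mathbb{R}}f\,dvdx=\overline{n_0}$. Multiplying the Vlasov equation by $v$ and integrating gives $\frac{d}{dt}\int_{\mathbb{T}\times\mathbb{R}}vf\,dvdx=\int_\mathbb{T}\rho n(u-w)dx$, while integrating $(\ref{m1})_2$ gives $\frac{d}{dt}\int_\mathbb{T}\rho u\,dx=-\int_\mathbb{T}\rho n(u-w)\,dx$. Adding these yields the total momentum conservation
\[
\overline{\rho_0}\,m_1(t)+\overline{n_0}\,m_2(t)=\text{const},\qquad \overline{\rho_0}\,m_1'(t)+\overline{n_0}\,m_2'(t)=0.
\]

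\textbf{Step 2 (Expansion of the quadratic terms).} I would write
\[
\tfrac{1}{2}\rho|u-m_1|^2=\tfrac12\rho u^2-\rho u\,m_1+\tfrac12\rho m_1^2,\qquad \tfrac12|v-m_2|^2 f=\tfrac12 v^2 f-vf\,m_2+\tfrac12 m_2^2 f,
\]
and integrate. Using $\int\rho u\,dx=\overline{\rho_0}m_1$, $\int\rho\,dx=\overline{\rho_0}$, $\int vf\,dvdx=\overline{n_0}m_2$, $\int f\,dvdx=\overline{n_0}$, a direct calculation gives
\[
\frac{d}{dt}\!\int_\mathbb{T}\!\tfrac12\rho|u-m_1|^2dx=\frac{d}{dt}\!\int_\mathbb{T}\!\tfrac12\rho u^2dx-\overline{\rho_0}\,m_1 m_1',
\]
\[
\frac{d}{dt}\!\int_{\mathbb{T}\times\mathbb{R}}\!\tfrac12|v-m_2|^2 f\,dvdx=\frac{d}{dt}\!\int_{\mathbb{T}\times\mathbb{R}}\!\tfrac12 v^2 f\,dvdx-\overline{n_0}\,m_2 m_2'.
\]
For $\Pi_\gamma(\rho|\overline{\rho_0})$, the linear-in-$\rho$ piece integrates to a constant by mass conservation, so $\frac{d}{dt}\int\Pi_\gamma(\rho|\overline{\rho_0})dx=\frac{d}{dt}\int \rho^\gamma/(\gamma-1)\,dx$.

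\textbf{Step 3 (Cancellation of the $m_i'$ terms).} Using $\overline{\rho_0}m_1'=-\overline{n_0}m_2'$, the cross-term derivative becomes
\[
\tfrac{\overline{\rho_0}\overline{n_0}}{\overline{n_0}+\overline{\rho_0}}(m_2-m_1)(m_2'-m_1')=\overline{n_0}(m_2-m_1)m_2',
\]
and summing with the two ``$-\overline{\rho_0}m_1 m_1'$'' and ``$-\overline{n_0}m_2 m_2'$'' contributions from Step 2 produces $-m_1(\overline{\rho_0}m_1'+\overline{n_0}m_2')=0$. This is precisely the role of the coefficient $\overline{\rho_0}\overline{n_0}/[2(\overline{n_0}+\overline{\rho_0})]$; the main observation is that this weight is chosen so that the three boundary contributions in $m_1'$, $m_2'$ collapse.

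\textbf{Step 4 (Conclusion via Lemma \ref{lemma21}).} After the cancellation,
\[
\frac{d}{dt}\mathcal{E}(t)=\frac{d}{dt}\!\int_\mathbb{T}\!\Big(\tfrac12\rho u^2+\tfrac{\rho^\gamma}{\gamma-1}\Big)dx+\frac{d}{dt}\!\int_{\mathbb{T}\times\mathbb{R}}\!\tfrac12 v^2 f\,dvdx.
\]
Applying (\ref{energyfluid})--(\ref{energyparticle}) (the fluid and kinetic energy identities used in the proof of Lemma \ref{lemma21}), the two drag integrals combine into $-\int \rho(u-v)^2 f\,dvdx$, and what remains is $-\int\mu(\rho)|u_x|^2dx$. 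This gives $\frac{d}{dt}\mathcal{E}(t)+\mathcal{D}(t)=0$.

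There is no serious analytic obstacle: by Theorem \ref{theorem11}, the solution has enough regularity for all integrations by parts and all above manipulations to be justified directly; the only thing that needs care is the bookkeeping of the $m_1',m_2'$ terms, and the weight $\overline{\rho_0}\overline{n_0}/[2(\overline{n_0}+\overline{\rho_0})]$ makes that algebra trivialize.
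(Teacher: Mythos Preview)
Your argument is correct, and it reaches the identity by a somewhat different and slicker route than the paper. The paper works directly with the shifted quantities: it multiplies the (Galilean-shifted) momentum equation (\ref{m21111}) by $(u-m_1)$ and the Vlasov equation by $\tfrac12|v-m_2|^2$, computes $\frac{d}{dt}\Pi_\gamma(\rho|\overline{\rho_0})$ separately, and then verifies by hand that the cross terms $\int\rho n(u-w)(u-m_1)\,dx$, $\int\rho n(u-w)(u-m_2)\,dx$, and $(m_2-m_1)\int\rho n(u-w)\,dx$ cancel after summation. You instead expand $\tfrac12\rho|u-m_1|^2$ and $\tfrac12|v-m_2|^2 f$ back to the absolute kinetic energies minus $\tfrac12\overline{\rho_0}m_1^2$ and $\tfrac12\overline{n_0}m_2^2$, observe that the mean-correction derivatives $-\overline{\rho_0}m_1m_1'-\overline{n_0}m_2m_2'$ combine with the cross-term derivative $\overline{n_0}(m_2-m_1)m_2'$ to vanish by total momentum conservation, and then invoke the already-proved basic energy identity from Lemma~\ref{lemma21}. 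Your approach is more economical, since it recycles (\ref{energyfluid})--(\ref{energyparticle}) rather than rederiving their shifted analogues; the paper's approach is more self-contained and makes the role of the individual drag-force cancellations explicit.
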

\begin{proof}
Multiplying (\ref{m21111}) by $(u-m_{1})$ and integrating the resulted equation by parts over $\mathbb{T}$, we obtain
\begin{equation}\label{um12}
\begin{split}
&\frac{1}{2}\frac{d}{dt}\int_{\mathbb{T}}\big{(}\rho|u-m_{1}|^2\big{)}(x,t)dx+\int_{\mathbb{T}}(\mu(\rho)|u_{x}|^2)(x,t)dx\\
&\quad=-\int_{\mathbb{T}}[(\rho^{\gamma}-\overline{\rho_{0}}^{\gamma})_{x}(u-m_{1})](x,t)dx-\int_{\mathbb{T}}[\rho n(u-w)(u-m_{1})](x,t)dx\\
&\quad\quad+\frac{1}{\overline{\rho_{0}}}\int_{\mathbb{T}} [\rho n (u-w)](x,t)dx\int_{\mathbb{T}}[\rho (u-m_{1})](x,t)dx\\
&\quad=\int_{\mathbb{T}}(\rho^{\gamma}u_{x})(x,t)dx-\int_{\mathbb{T}}[\rho n(u-w)(u-m_{1})](x,t)dx,
\end{split}
\end{equation}
where we have used the fact
$$
\int_{\mathbb{T}}[\rho (u-m_{1})](x,t)dx=0.
$$
We get after a direct computation that
\begin{equation}\label{rho12}
\begin{split}
&\frac{d}{dt}\int_{\mathbb{T}}\Pi_{\gamma}(\rho|\overline{\rho_{0}})(x,t)dx=\frac{d}{dt}\int_{\mathbb{T}}\frac{\rho^{\gamma}(x,t)}{\gamma-1}dx=-\int_{\mathbb{T}}(\rho^{\gamma}u_{x})(x,t)dx.
\end{split}
\end{equation}
Since it holds
\begin{equation}\label{m2t}
\begin{split}
\frac{d}{dt}m_{2}(t)=\frac{1}{\overline{n_{0}}}\frac{d}{dt}\int_{\mathbb{T}\times\mathbb{R}} vf(x,v,t)dvdx=\frac{1}{\overline{n_{0}}}\int_{\mathbb{T}\times\mathbb{R}}[\rho n(u-w)](x,t)dx,
\end{split}
\end{equation}
and
\begin{equation}\label{m20}
\begin{split}
\int_{\mathbb{T}\times\mathbb{R}}(v-m_{2}(t))f(x,v,t)dvdx=0,
\end{split}
\end{equation}
 multiplying the Vlasov equation $(\ref{m1})_{3}$ by $\frac{1}{2}|v-m_{2}|^2$ and integrating the resulted equation by parts over $\mathbb{T}\times\mathbb{R}$, we have
\begin{equation}\label{fm12}
\begin{split}
&\frac{d}{dt}\int_{\mathbb{T}\times\mathbb{R}}\frac{1}{2}|v-m_{2}(t)|^2f(x,v,t)dvdx\\
&\quad=\int_{\mathbb{T}\times\mathbb{R}}\frac{1}{2}|v-m_{2}(t)|^2f_{t}(x,v,t)dvdx+\int_{\mathbb{T}\times\mathbb{R}}(v-m_{2}(t))(m_{2}(t))_{t}f(x,v,t)dvdx\\
&\quad=\int_{\mathbb{T}\times\mathbb{R}}\frac{1}{2}|v-m_{2}(t)|^2\big{[}-vf_{x}-(\rho(u-v)f)_{v}\big{]}(x,v,t)dvdx\\
&\quad\quad+\frac{1}{\overline{n_{0}}}\int_{\mathbb{T}}[\rho n(u-w)](x,t)dx\int_{\mathbb{T}\times\mathbb{R}}(v-m_{2}(t))f(x,v,t)dvdx\\
&\quad=\int_{\mathbb{T}\times\mathbb{R}}\rho(x,t)(u(x,t)-v)(v-m_{2}(t))f(x,v,t)dvdx\\
&\quad=-\int_{\mathbb{T}\times\mathbb{R}}\rho(x,t)|u(x,t)-v|^2f(x,v,t)dvdx+\int_{\mathbb{T}}[\rho n(u-w)(u-m_{2})](x,t)dx.
\end{split}
\end{equation}
By virtue of $(\ref{basiccnsv})_{2}$, (\ref{momentumfluid}) and (\ref{m2t}), one can show
\begin{equation}
\begin{split}
&\frac{d}{dt}\big{(}\frac{\overline{\rho_{0}}~\overline{n_{0}}}{2(\overline{n_{0}}+\overline{\rho_{0}})}|(m_{2}-m_{1})(t)|^2\big{)}=(m_{2}-m_{1})(t)\int_{\mathbb{T}}[\rho n(u-w)](x,t)dx.\label{m1m212}
\end{split}
\end{equation}
Adding (\ref{um12}), (\ref{rho12}), (\ref{fm12}) and (\ref{m1m212}) together, we can obtain (\ref{entropydissipation}).
\end{proof}

In order to find the desired dissipative information associated with $\rho$, we need
\begin{lemma}
Let $T>0$, and $(\rho,u,f)$ be one regular solution to the IVP (\ref{m1})-(\ref{kappa}) for $t\in[0,T]$. Then, under the assumptions of Theorem 1.1, it holds
\begin{align}
&|\int_{\mathbb{T}}[\rho(u-m_{1})\mathcal{J}(\rho-\overline{\rho_{0}})](x,t)dx|\leq c_{1}(\|\big{(}\sqrt{\rho}(u-m_{1})\big{)}(t)\|_{L^2}^2+\|\Pi_{\gamma}(\rho|\overline{\rho_{0}})(t)\|_{L^1}),\label{perturb}\\
&\frac{d}{dt}\int_{\mathbb{T}}[\rho(u-m_{1})\mathcal{J}(\rho-\overline{\rho_{0}})](x,t)dx\geq c_{0}\|\Pi_{\gamma}(\rho|\overline{\rho_{0}})(t)\|_{L^1}- c_{1}(\|u_{x}(t)\|_{L^2}^2+\|\big{(}\sqrt{\rho f}(u-v)\big{)}(t)\|_{\mathcal{L}^2}^2),\label{perturb-}
\end{align}
where $c_{i}>0$, $i=1,2$, are two positive constants only depending on $\overline{\rho_{0}}$ and $\rho_{+}$, and the operator $\mathcal{J}$ is defined by (\ref{j}).
\end{lemma}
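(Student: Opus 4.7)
\bigskip

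\noindent\textit{Proof proposal.} The operator $\mathcal{J}$ appearing in the statement is the operator $\mathcal{I}$ defined by $(\ref{j})$ (or its obvious analogue), and throughout the proof I will freely use the basic properties $(\ref{J})$--$(\ref{J1})$, together with mass conservation $\int_{\mathbb{T}}(\rho-\overline{\rho_{0}})dx=0$ from Lemma \ref{lemma21}. In particular, since $\rho-\overline{\rho_{0}}$ has zero mean, $(\ref{J})_{3}$ gives $\mathcal{I}(\rho-\overline{\rho_{0}})\in W^{1,1}(\mathbb{T})$ with $\|\mathcal{I}(\rho-\overline{\rho_{0}})\|_{L^{\infty}}\leq\|\rho-\overline{\rho_{0}}\|_{L^{1}}$, and the uniform bound $\rho\leq\rho_{+}$ from $(\ref{rhocnsv})$ yields the two-sided equivalence $c|\rho-\overline{\rho_{0}}|^{2}\leq\Pi_{\gamma}(\rho|\overline{\rho_{0}})\leq C|\rho-\overline{\rho_{0}}|^{2}$ as in $(\ref{rho2})$.

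For the first bound $(\ref{perturb})$, I would split $\rho(u-m_{1})=\sqrt{\rho}\cdot\sqrt{\rho}(u-m_{1})$, apply Cauchy--Schwarz, use $\|\mathcal{I}(\rho-\overline{\rho_{0}})\|_{L^{\infty}}\leq\|\rho-\overline{\rho_{0}}\|_{L^{1}}\leq\|\rho-\overline{\rho_{0}}\|_{L^{2}}\leq C\|\Pi_{\gamma}(\rho|\overline{\rho_{0}})\|_{L^{1}}^{1/2}$ via the density bound, and then conclude by Young's inequality.

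The real work is $(\ref{perturb-})$. I would start from Leibniz and treat the two contributions separately:
\begin{equation*}
\frac{d}{dt}\int_{\mathbb{T}}\rho(u-m_{1})\mathcal{I}(\rho-\overline{\rho_{0}})\,dx
=\int_{\mathbb{T}}\bigl(\rho(u-m_{1})\bigr)_{t}\mathcal{I}(\rho-\overline{\rho_{0}})\,dx
+\int_{\mathbb{T}}\rho(u-m_{1})\bigl(\mathcal{I}(\rho-\overline{\rho_{0}})\bigr)_{t}\,dx.
\end{equation*}
For the time derivative of $\mathcal{I}$, the mass equation and $(\ref{J1})$ give $(\mathcal{I}(\rho-\overline{\rho_{0}}))_{t}=-\mathcal{I}((\rho u)_{x})=-(\rho u-\overline{\rho_{0}}m_{1})$; writing $\rho u=\rho(u-m_{1})+m_{1}(\rho-\overline{\rho_{0}})+\overline{\rho_{0}}m_{1}$, this second integral is controlled by $C\|\sqrt{\rho}(u-m_{1})\|_{L^{2}}^{2}+\varepsilon\|\Pi_{\gamma}\|_{L^{1}}$. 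For the first integral, I would use the reformulated momentum equation $(\ref{m21111})$ (rewritten relative to $m_{1}$ via $(\ref{momentumfluid})$, as in the derivation of $(\ref{m21111})$ in Lemma \ref{lemma26}) and integrate by parts. The crucial positive contribution is
\begin{equation*}
-\int_{\mathbb{T}}(\rho^{\gamma})_{x}\mathcal{I}(\rho-\overline{\rho_{0}})\,dx
=\int_{\mathbb{T}}\rho^{\gamma}(\rho-\overline{\rho_{0}})\,dx
=\int_{\mathbb{T}}(\rho^{\gamma}-\overline{\rho_{0}}^{\gamma})(\rho-\overline{\rho_{0}})\,dx
\geq 2c_{0}\|\Pi_{\gamma}(\rho|\overline{\rho_{0}})\|_{L^{1}},
\end{equation*}
using mean-value / convexity and the lower--upper bounds on $\rho$.

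The remaining terms would then be estimated as error terms absorbable into $c_{0}\|\Pi_{\gamma}\|_{L^{1}}+c_{1}(\|u_{x}\|_{L^{2}}^{2}+\|\sqrt{\rho f}(u-v)\|_{\mathcal{L}^{2}}^{2})$: the convective piece $\int\rho u(u-m_{1})(\rho-\overline{\rho_{0}})dx$ uses $\|u-m_{1}\|_{L^{\infty}}\leq\|u_{x}\|_{L^{2}}$ from $(\ref{um1infty})$; the viscous piece $\int\mu(\rho)u_{x}(\rho-\overline{\rho_{0}})dx$ uses $\mu(\rho)\leq\mu(\rho_{+})$ and Young; the two drag pieces use $\|\mathcal{I}(\rho-\overline{\rho_{0}})\|_{L^{\infty}}\leq C$ together with the Cauchy--Schwarz bound $|\rho n(u-w)|\leq\sqrt{\rho n}\,(\int\rho(u-v)^{2}f\,dv)^{1/2}$, which integrates to give the $\|\sqrt{\rho f}(u-v)\|_{\mathcal{L}^{2}}^{2}$ contribution. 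Finally, the leftover $\|\sqrt{\rho}(u-m_{1})\|_{L^{2}}^{2}$ coming from the $(\mathcal{I})_{t}$ piece and the convective term is itself controlled by $\rho_{+}\|u-m_{1}\|_{L^{2}}^{2}\leq C\|u_{x}\|_{L^{2}}^{2}$ via $(\ref{um1infty})$, so nothing outside the allowed right-hand side survives.

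The main obstacle I anticipate is the drag integral $\int\rho n(u-w)\mathcal{I}(\rho-\overline{\rho_{0}})dx$ together with its mean-field counterpart $\overline{\rho_{0}}^{-1}(\int\rho n(u-w)dy)\int\rho\mathcal{I}(\rho-\overline{\rho_{0}})dx$: one must avoid dissipative quantities that are not already on the right-hand side of $(\ref{perturb-})$, and in particular avoid picking up $\|\Pi_{\gamma}\|_{L^{1}}$ with a coefficient larger than $c_{0}$. Carefully using the pointwise estimate $|\rho n(u-w)|^{2}\leq\rho n\cdot\int\rho(u-v)^{2}f\,dv$ (from Cauchy--Schwarz in $v$) together with the uniform bounds on $\rho$ and on $n=\int f\,dv$ (Lemma \ref{lemma23}) routes everything through $\|\sqrt{\rho f}(u-v)\|_{\mathcal{L}^{2}}^{2}$, and then a choice of sufficiently small $\varepsilon$ in the Young inequalities closes the estimate with constants depending only on $\overline{\rho_{0}}$, $\rho_{+}$, $\|f_{0}\|_{\mathcal{L}^{1}}$ and $\|\rho_{0}\|_{L^{1}}$.
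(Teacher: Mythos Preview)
Your approach is correct and essentially the same as the paper's: both proofs prove $(\ref{perturb})$ by Cauchy--Schwarz plus the $\Pi_{\gamma}\sim|\rho-\overline{\rho_{0}}|^{2}$ equivalence, and prove $(\ref{perturb-})$ by differentiating, inserting the relative momentum equation $(\ref{m21111})$, integrating by parts against $(\mathcal{I}(\rho-\overline{\rho_{0}}))_{x}=\rho-\overline{\rho_{0}}$, and isolating the positive pressure contribution $\int(\rho^{\gamma}-\overline{\rho_{0}}^{\gamma})(\rho-\overline{\rho_{0}})\,dx$.

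One organizational point is worth noting. You bound the convective piece $\int\rho u(u-m_{1})(\rho-\overline{\rho_{0}})\,dx$ and the $(\mathcal{I})_{t}$ piece separately; as written this still contains a bare $u$ (not $u-m_{1}$), so you would implicitly need a bound on $|m_{1}(t)|$ and hence on $E_{0}$, contrary to your stated dependence of the constants. The paper avoids this by observing the exact cancellation
\[
\int\rho u(u-m_{1})(\rho-\overline{\rho_{0}})\,dx
-\ m_{1}\!\int\rho(u-m_{1})(\rho-\overline{\rho_{0}})\,dx
-\int\rho^{2}(u-m_{1})^{2}\,dx
=-\,\overline{\rho_{0}}\!\int\rho(u-m_{1})^{2}\,dx,
\]
so that the combined identity reads
\[
\frac{d}{dt}\int\rho(u-m_{1})\mathcal{I}(\rho-\overline{\rho_{0}})\,dx
=\int(\rho^{\gamma}-\overline{\rho_{0}}^{\gamma})(\rho-\overline{\rho_{0}})\,dx
-\int\bigl[\overline{\rho_{0}}\rho|u-m_{1}|^{2}+\mu(\rho)u_{x}(\rho-\overline{\rho_{0}})\bigr]dx
+(\text{drag terms}),
\]
with no residual $m_{1}$. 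From here $(\ref{um1infty})$ and Young finish exactly as you describe. For the drag terms, the paper uses the sharper bound $\|\mathcal{I}(\rho-\overline{\rho_{0}})\|_{L^{\infty}}\leq\|\rho-\overline{\rho_{0}}\|_{L^{1}}\leq\|\rho-\overline{\rho_{0}}\|_{L^{2}}$ (rather than just $\leq C$), which lets the Young inequality absorb a small multiple of $\|\rho-\overline{\rho_{0}}\|_{L^{2}}^{2}$ back into the positive pressure term; your pointwise Cauchy--Schwarz in $v$ achieves the same end once paired with this.
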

\begin{proof}
One deduce from the Young's inequality, (\ref{basiccnsv}) and (\ref{J}) that
\begin{equation}\label{ppp}
\begin{split}
&|\int_{\mathbb{T}}[\rho (u-m_{1})\mathcal{J}(\rho-\overline{\rho_{0}})](x,t)dx|\leq \frac{1}{2}(\|\big{(}\sqrt{\rho}(u-m_{1})\big{)}(t)\|_{L^2}^2+\|\rho(t)\|_{L^1}\|\mathcal{J}(\rho-\overline{\rho_{0}})(t)\|_{L^{\infty}}^2)\\
&\quad\quad\quad\quad\quad\quad\quad\quad\quad\quad\quad\quad\quad\quad~\leq \frac{1}{2}(\|\big{(}\sqrt{\rho}(u-m_{1})\big{)}(t)\|_{L^2}^2+\overline{\rho_{0}}\|(\rho-\overline{\rho_{0}})(t)\|_{L^{2}}^2).
\end{split}
\end{equation}
 Due to (\ref{rho2}) and (\ref{rhocnsv}), it holds
\begin{equation}\label{Pi2}
\left\{
\begin{split}
&\|\Pi_{\gamma}(\rho|\overline{\rho_{0}})(t)\|_{L^1}\geq \min{\{\frac{\gamma}{2}(\rho_{+}+\overline{\rho_{0}})^{\gamma-2},\overline{\rho_{0}}^{\gamma-2}\}}\|(\rho-\overline{\rho_{0}})(t)\|_{L^2}^2,\\
&\|\Pi_{\gamma}(\rho|\overline{\rho_{0}})(t)\|_{L^1}\leq \max{\{\frac{\gamma}{2}(\rho_{+}+\overline{\rho_{0}})^{\gamma-2},\overline{\rho_{0}}^{\gamma-2}\}}\|(\rho-\overline{\rho_{0}})(t)\|_{L^2}^2,
\end{split}
\right.
\end{equation}
which together with (\ref{ppp}) implies (\ref{perturb}) directly.

We are going to estimate (\ref{perturb-}). It follows from $(\ref{m1})_{1}$, (\ref{J1}) and (\ref{m1m2}) that
\begin{align}
&(\mathcal{J}(\rho-\overline{\rho_{0}}))_{t}=-\mathcal{J}((\rho u)_{x})=\overline{\rho u}-\rho u=-\rho(u-m_{1})-(\rho-\overline{\rho_{0}})m_{1}. \label{m2111}
\end{align}
Since it holds $\mathcal{J}(\rho-\overline{\rho_{0}})(1)=\mathcal{J}(\rho-\overline{\rho_{0}})(0)$, we deduce by the equations (\ref{m21111}) and (\ref{m2111}) and integration by parts that
\begin{equation}\label{perturb1}
\begin{split}
&\frac{d}{dt}\int_{\mathbb{T}}[\rho (u-m_{1})\mathcal{J}(\rho-\overline{\rho_{0}})](x,t)dx\\
&\quad=\int_{\mathbb{T}}[(\rho^{\gamma}-\overline{\rho_{0}}^{\gamma})(\rho-\overline{\rho_{0}})](x,t)dx-\int_{\mathbb{T}}[\overline{\rho_{0}}\rho|u-m_{1}|^2+\mu(\rho)u_{x}(\rho-\overline{\rho_{0}})](x,t)dx\\
&\quad\quad+\int_{\mathbb{T}}\mathcal{J}(\rho-\overline{\rho_{0}})(x,t)\Big{(}-[\rho n(u-w)](x,t)+\frac{\rho(x,t)}{\overline{\rho_{0}}}\int_{\mathbb{T}} [\rho n (u-w)](y,t)dy\Big{)}dx.
\end{split}
\end{equation}
To estimate the first term on the right-hand side of (\ref{perturb1}), by $(\ref{basiccnsv})_{1}$, we infer
\begin{equation}\label{perturb11}
\begin{split}
\int_{\mathbb{T}}[(\rho^{\gamma}-\overline{\rho_{0}}^{\gamma})(\rho-\overline{\rho_{0}})](x,t)dx=\int_{\mathbb{T}}\big{(}|\rho-\overline{\rho_{0}}|^2\frac{\rho^{\gamma}-\overline{\rho_{0}}^{\gamma}}{\rho-\overline{\rho_{0}}}\big{)}(x,t)dx\geq \gamma(\overline{\rho_{0}})^{\gamma-1}\|(\rho-\overline{\rho_{0}})(t)\|_{L^2}^2.
\end{split}
\end{equation}
One can show
\begin{equation}\label{perturb12}
\begin{split}
&|\int_{\mathbb{T}}[\overline{\rho_{0}}\rho|u-m_{1}|^2+\mu(\rho)u_{x}(\rho-\overline{\rho_{0}})](x,t)dx|\\
&\quad\leq \overline{\rho_{0}}^2\|(u-m_{1})(t)\|_{L^{\infty}}^2+\mu(\rho_{+})\|u_{x}(t)\|_{L^2}\|(\rho-\overline{\rho_{0}})(t)\|_{L^2}\\
&\quad\leq \frac{\gamma(\overline{\rho_{0}})^{\gamma-1}}{4}\|(\rho-\overline{\rho_{0}})(t)\|_{L^2}^2+(\frac{\mu^{2}(\rho_{+})}{\gamma(\overline{\rho_{0}})^{\gamma-1}}+\overline{\rho_{0}}^2)\|u_{x}(t)\|_{L^2}^2,
\end{split}
\end{equation}
where we have used $(\ref{rhocnsv})_{2}$ and (\ref{um1infty}). It also follows from $(\ref{basiccnsv})$, (\ref{rhocnsv}) and (\ref{J}) that
\begin{equation}\label{perturb13}
\begin{split}
&\Big|\int_{\mathbb{T}}\mathcal{J}(\rho-\overline{\rho_{0}})(x,t)\Big{(}-[\rho n(u-w)](x,t)+\frac{\rho(x,t)}{\overline{\rho_{0}}}\int_{\mathbb{T}} [\rho n (u-w)](y,t)dy\Big{)}dx \Big|\\
&\quad\leq \|\mathcal{J}(\rho-\overline{\rho_{0}})(t)\|_{L^{\infty}}\|\big{(}\rho f(u-v)\big{)}(t)\|_{\mathcal{L}^1}\\
&\quad\leq 2\rho_{+}^{\frac{1}{2}}\|(\rho-\overline{\rho_{0}})(t)\|_{L^1}\|f(t)\|_{\mathcal{L}^1}^{\frac{1}{2}}\|\big{(}\sqrt{\rho f}(u-v)\big{)}(t)\|_{\mathcal{L}^2}\\
&\quad\leq \frac{\gamma(\overline{\rho_{0}})^{\gamma-1}}{4}\|(\rho-\overline{\rho_{0}})(t)\|_{L^2}^2+\frac{4\rho_{+}\overline{n_{0}}}{\gamma(\overline{\rho_{0}})^{\gamma-1}}\|\big{(}\sqrt{\rho f}(u-v)\big{)}(t)\|_{\mathcal{L}^2}^2.
\end{split}
\end{equation}
Substituting (\ref{perturb11})-(\ref{perturb12}), (\ref{perturb13}) into (\ref{perturb1}) and making use of (\ref{Pi2}), we gain (\ref{perturb-}). 
\end{proof}

\underline{\it\textbf{Proof of Theorem \ref{decay} on exponential time-decay rate:~}}  
Let $\mathcal{E}(t)$ and $\mathcal{D}(t)$ be given by (\ref{ED}). For any $\alpha>0$, define
\begin{equation}
\left\{
\begin{split}
&\mathcal{E}^{\alpha}(t)=:\mathcal{E}(t)-\alpha\int_{\mathbb{T}}[\rho(u-m_{1})\mathcal{J}(\rho-\overline{\rho_{0}})](x,t)dx,\\
&\mathcal{D}^{\alpha}(t)=:\mathcal{D}(t)+\alpha\frac{d}{dt}\int_{\mathbb{T}}[\rho(u-m_{1})\mathcal{J}(\rho-\overline{\rho_{0}})](x,t)dx.
\end{split}
\right.
\end{equation}
Thus, it follows from (\ref{entropydissipation}) that
\begin{equation}\label{entropydissipation11}
\begin{split}
\frac{d}{dt}\mathcal{E}^{\alpha}(t)+\mathcal{D}^{\alpha}(t)= 0.
\end{split}
\end{equation}
Making use of (\ref{perturb}), one has
\begin{equation}\label{dissipating1}
\begin{split}
&(1-c_{1}\alpha)\mathcal{E}(t)\leq \mathcal{E}^{\alpha}(t)\leq(1+c_{1}\alpha)\mathcal{E}(t).
\end{split}
\end{equation}
We also deduce from (\ref{m20}) and (\ref{lowerboundrho}) that
\begin{equation}\nonumber
\begin{split}
&\int_{\mathbb{T}\times\mathbb{R}}\rho(x,t)|u(x,t)-v|^2f(x,v,t)dvdx\\
&\quad\geq \rho_{-}\int_{\mathbb{T}\times\mathbb{R}}|u(x,t)-m_{1}(t)+m_{1}(t)-m_{2}(t)+m_{2}(t)-v|^2f(x,v,t)dvdx\\
&\quad=\rho_{-}\int_{\mathbb{T}\times\mathbb{R}} |u(x,t)-m_{1}(t)|^2f(x,v,t)dvdx+\rho_{-}|(m_{1}-m_{2})(t)|^2\int_{\mathbb{T}\times\mathbb{R}} f(x,v,t)dvdx\\
&\quad\quad+\rho_{-}\int_{\mathbb{T}\times\mathbb{R}} |v-m_{2}(t)|^2f(x,v,t)dvdx+2\rho_{-}(m_{1}-m_{2})(t)\int_{\mathbb{T}\times\mathbb{R}}  (u(x,t)-m_{1}(t))f(x,v,t)dvdx\\
&\quad\quad+2\rho_{-}\int_{\mathbb{T}\times\mathbb{R}}  (u(x,t)-m_{1}(t))(m_{2}(t)-v)f(x,v,t)dvdx,
\end{split}
\end{equation}
which together with $(\ref{basiccnsv})_{2}$, (\ref{um1infty}) and the Young's inequality leads to
\begin{equation}\label{this}
\begin{split}
&\|\big{(}\sqrt{\rho f}(u-v)\big{)}(t)\|_{\mathcal{L}^2}^2\\
&\quad\geq \rho_{-}\overline{n_{0}}\|(u-m_{1})(t)\|_{L^{\infty}}^2+\rho_{-}\overline{n_{0}}|(m_{1}-m_{2})(t)|^2+\rho_{-}\|\big{(}(v-m_{2})f\big{)}(t)\|_{\mathcal{L}^2}^2\\
&\quad\quad-2\rho_{-}\overline{n_{0}}|(m_{1}-m_{2})(t)|\|(u-m_{1})(t)\|_{L^{\infty}}-2\rho_{-}\overline{n_{0}}^{\frac{1}{2}}\|\big{(}(v-m_{2})f\big{)}(t)\|_{\mathcal{L}^2}\|(u-m_{1})(t)\|_{L^{\infty}}\\
&\quad\geq -3\rho_{-}\overline{n_{0}}\|u_{x}(t)\|_{L^2}^2+\frac{\rho_{-}\overline{n_{0}}}{2}|(m_{1}-m_{2})(t)|^2+\frac{\rho_{-}}{2}\|\big{(}(v-m_{2})f\big{)}(t)\|_{\mathcal{L}^2}^2.
\end{split}
\end{equation}
In accordance to (\ref{ENSsim}),  (\ref{perturb-}), and (\ref{this}), there is a constant $c_{3}>0$ independent of time $T$ such that it holds
 \begin{equation}\label{dissipating2}
 \begin{split}
 &\mathcal{D}^{\alpha}(t) \geq \alpha c_{0}\|\Pi_{\gamma}(\rho|\overline{\rho_{0}})(t)\|_{L^1}\\
 &\quad\quad~\quad+c_{3}(1-c_{1}\alpha)\big{(}\|\big{(}\sqrt{\rho}(u-m_{1})\big{)}(t)\|_{L^2}^2+\|\big{(}(v-m_{2})f\big{)}(t)\|_{\mathcal{L}^2}^2+|(m_{1}-m_{2})(t)|^2\big{)}.
 \end{split}
 \end{equation}
 Therefore, choosing $\alpha=\frac{1}{2c_{1}}$ in (\ref{entropydissipation11}), one can show by (\ref{entropydissipation11})-(\ref{dissipating1}) and (\ref{dissipating2}) that
\begin{equation}\nonumber
\begin{split}
&\mathcal{D}^{\alpha}(t)\geq \lambda_{1}\mathcal{E}^{\alpha}(t),\quad \frac{d}{dt}\mathcal{E}^{\alpha}(t)+\lambda_{1}\mathcal{E}^{\alpha}(t)\leq 0,\quad \lambda_{1}=: \frac{1}{2}\min\{\frac{c_{0}}{c_{1}},c_{3}\}.
\end{split}
\end{equation}
Applying the Gr$\rm{\ddot{o}}$nwall's inequality and making use of (\ref{Pi2}), (\ref{lowerboundrho}) and $(\ref{dissipating1})$, one obtains
\begin{equation}
\begin{split}
\|(\rho-\overline{\rho_{0}})(t)\|_{L^2}+\|(u-m_{1})(t)\|_{L^2}+\|\big{(}\sqrt{f}(v-m_{2})\big{)}(t)\|_{\mathcal{L}^2}+|(m_{1}-m_{2})(t)|\leq Ce^{-\frac{\lambda_{1}}{2} t}.\label{b111}
\end{split}
\end{equation}

Moreover, by (\ref{rhoh1uniform}) and (\ref{b111}), it holds for any $t\in[0,T]$,
\begin{equation}
\begin{split}
\|(\rho-\overline{\rho_{0}})(t)\|_{C^{0}}\leq C\|(\rho-\overline{\rho_{0}})(t)\|_{L^{2}}^{\frac{1}{2}}\|\rho_{x}(t)\|_{L^2}^{\frac{1}{2}}\leq Ce^{-\frac{\lambda_{1}}{4} t}.\label{brho1}
\end{split}
\end{equation}
Since the conservation laws of mass and momentum implies
$$
 m_{1}=\frac{1}{\overline{\rho_{0}}}(\overline{\rho_{0}u_{0}}+\overline{n_{0}w_{0}}-\overline{n_{0}} m_{2}),
$$
we can get
\begin{equation}\label{m1andm2}
\begin{split}
&|(m_{1}-m_{2})(t)|=|(\frac{1}{\overline{\rho_{0}} }(\overline{\rho_{0}u_{0}}+\overline{n_{0}w_{0}}-\overline{n_{0}} m_{2})-m_{2})(t)|=\frac{\overline{\rho_{0}}+\overline{n_{0}}}{\overline{\rho_{0}}}|(m_{2}-u_{s})(t)|,
\end{split}
\end{equation}
where the constant $u_{s}$ is given by (\ref{rhoinfty}). The combination of (\ref{b111}) and (\ref{m1andm2}) leads to
\begin{equation}
\begin{split}
|(m_{1}-u_{s})(t)|+|(m_{2}-u_{s})(t)|\leq C|(m_{1}-m_{2})(t)|\leq Ce^{-\frac{\lambda_{1}}{2} t}.\label{m1m2b1}
\end{split}
\end{equation}
For the fluid velocity $u$, we also apply (\ref{lowerboundrho}) and (\ref{b111}) to have
\begin{equation}
\begin{split}
&\|(u-u_{s})(t)\|_{L^2}\leq\|(u-m_{1})(t)\|_{L^{2}}+|(m_{1}-u_{s})(t)|\\
&\quad\quad\quad\quad\quad\quad~~\leq \rho_{-}^{-\frac{1}{2}}\|\sqrt{\rho}(u-m_{1})(t)\|_{L^2}+|(m_{1}-u_{s})(t)|\leq Ce^{-\frac{\lambda_{1}}{2} t}.\label{ub1}
\end{split}
\end{equation}
To estimate $f$, one deduces from (\ref{b111}) and (\ref{ub1}) that
\begin{equation}
\begin{split}
&\|\big{(}\sqrt{f}(v-u_{s})\big{)}(t)\|_{\mathcal{L}^2}\leq \|\big{(}\sqrt{f}(v-m_{2})\big{)}(t)\|_{\mathcal{L}^2}+\|f(t)\|_{\mathcal{L}^1}^{\frac{1}{2}}|(m_{2}-u_{s})(t)|\leq C e^{-\frac{\lambda_{1}}{2} t}.\label{fb1}
\end{split}
\end{equation}
And thanks to (\ref{nw}), (\ref{fb1}) and the H$\rm{\ddot{o}}$lder's inequality, it holds
\begin{equation}
\begin{split}
&\int_{\mathbb{T}} (n|w-u_{s}|^2)(x,t)dx=\int_{\{n(x,t)\neq 0\}} \frac{|\int(v-u_{s})f(x,v,t)dv|^2}{ n(x,t)} dx\\
&\quad\quad\quad\quad\quad\quad\quad\quad\quad~~\leq \int_{\mathbb{T}\times\mathbb{R}} |v-u_{s}|^2f(x,v,t)dvdx\leq C e^{-\lambda_{1} t}.\label{bnwb1}
\end{split}
\end{equation}
Then according to the Definition \ref{defn11} and (\ref{fb1}), we have
\begin{equation}\label{fb11}
\begin{split}
 &{\rm{W_{1}}}\big{(}f(t),(n\delta(v-u_{s}))(t)\big{)}\\
 &\quad=\sup_{\|\nabla_{x,v}\psi\|_{\mathcal{L}^{\infty}}\leq 1}\Big{\{}\int_{\mathbb{T}\times\mathbb{R}}\big{(}f(x,v,t)\psi(x,v)dvdx-\int_{\mathbb{T}\times\mathbb{R}} \psi(x,v)  \delta(v-u_{s})\big{)} \int_{\mathbb{R}}f(x,\widetilde{v},t)d\widetilde{v}  dvdx\Big{\}}\\
&\quad=\sup_{\|\nabla_{x,v}\psi\|_{\mathcal{L}^{\infty}}\leq 1}\Big{\{}\int_{\mathbb{T}\times\mathbb{R}} f(x,v,t)(\psi(x,v)-\psi(x,u_{s}))dvdx\Big{\}}\\
&\quad\leq \int_{\mathbb{T}\times\mathbb{R}} |v-u_{s}|f(x,v,t)dvdx\\
&\quad\leq\|\big{(}\sqrt{f}(v-u_{s})\big{)}(t)\|_{\mathcal{L}^2}\|f(t)\|_{\mathcal{L}^1}^{\frac{1}{2}}\leq Ce^{-\frac{\lambda_{1}}{2}t}.
\end{split}
\end{equation}

We show that the compact support of $f$ in velocity concentrates at $u_{s}$ exponentially in time. It is easy to verify
\begin{equation}\nonumber
\begin{split}
u_{s}=e^{-\int_{0}^{t}\rho(X^{x,v,t}(\omega),\omega)d\omega} u_{s}+\int_{0}^{t}e^{-\int_{\tau}^{t}\rho(X^{x,v,t}(\omega),\omega)d\omega}\rho(X^{x,v,t}(\tau),\tau)u_{s}d\tau,
\end{split}
\end{equation}
which together with (\ref{xv1}) leads to
\begin{equation}\label{vus}
\begin{split}
&v-u_{s}=e^{-\int_{0}^{t}\rho(X^{x,v,t}(\tau),\tau)d\tau}(V^{x,v,t}(0)-u_{s})\\
&\quad\quad\quad\quad+\int_{0}^{t}e^{-\int_{\tau}^{t}\rho(X^{x,v,t}(\omega),\omega)d\omega}\big{(}\rho (u-u_{s})\big{)}(X^{x,v,t}(\tau),\tau)d\tau,
\end{split}
\end{equation}
where the bi-characteristic curves $(X^{x,v,t}(s),V^{x,v,t}(s))$ are given by (\ref{XV}). By (\ref{m1m2b1})-(\ref{ub1}) and the Gagliardo-Nirenberg's inequality, one can show
\begin{equation}\label{pppppp}
\begin{split}
&\|(u-u_{s})(t)\|_{L^{\infty}}\leq \|(u-\overline{u})(t)\|_{L^{\infty}}+|(\overline{u}-u_{s})(t)|\\
&\quad\quad\quad\quad\quad\quad\quad\leq C\|(u-\overline{u})(t)\|_{L^2}^{\frac{1}{2}}\|u_{x}(t)\|_{L^2}^{\frac{1}{2}}+|\int_{\mathbb{T}} (u-u_{s})(x,t)dx|\\
&\quad\quad\quad\quad\quad\quad\quad\leq C(\|(u-m_{1})(t)\|_{L^2}+|(m_{1}-\overline{u})(t)|)^{\frac{1}{2}}\|u_{x}(t)\|_{L^2}^{\frac{1}{2}}+\|(u-u_{s})(t)\|_{L^2}\\
&\quad\quad\quad\quad\quad\quad\quad\leq Ce^{-\frac{\lambda_{1}}{4}t}\|u_{x}(t)\|_{L^2}^{\frac{1}{2}}+Ce^{-\frac{\lambda_{1}}{2}t}.
\end{split}
\end{equation}
It derives from $(\ref{basiccnsv})_{4}$, (\ref{fformula}), (\ref{ub1}), (\ref{vus}), (\ref{pppppp}) and the H${\rm{\ddot{o}}}$lder inequality for any $(x,t)\in\mathbb{T}\times[0,\infty)$ that
\begin{equation}\label{vb1}
\begin{split}
&\underset{v\in \Sigma(x,t)}{\rm{sup}}|v-u_{s}|\\
&\quad\leq e^{-\rho_{-}t}(|u_{s}|+R_{0})+C\int_{0}^{t}e^{-\rho_{-}(t-\tau)}\|(u-u_{s})(\tau)\|_{L^{\infty}}d\tau\\
&\quad\leq e^{-\rho_{-}t}(|u_{s}|+R_{0})+C\int_{0}^{t}e^{-\rho_{-}(t-\tau)}\big{(}e^{-\frac{\lambda_{1}}{4}\tau}\|u_{x}(\tau)\|_{L^2}^{\frac{1}{2}}+e^{-\frac{\lambda_{1}}{2}\tau}\big{)}d\tau\\
&\quad\leq Ce^{-\rho_{-}t}+C\|u_{x}\|_{L^2(0,t;L^2)}^{\frac{1}{2}}\big{(}\int_{0}^{t}e^{-\frac{4\rho_{-}}{3}(t-\tau)}e^{-\frac{\lambda_{1}}{3}t}d\tau\big{)}^{\frac{3}{4}}+C\int_{0}^{t}e^{-\rho_{-}(t-\tau)}e^{-\frac{\lambda_{1}}{2}\tau}d\tau\\
&\quad\leq C(e^{-\frac{\rho_{-}}{2}t}+e^{-\frac{\lambda_{1}}{8}t}),
\end{split}
\end{equation}
where $\Sigma(x,t)$ is given by (\ref{Sigma}) and one has used the fact for some constants $a_{1}>0$ and $a_{2}>0$ that
\begin{equation}\nonumber
\begin{split}
&\int_{0}^{t}e^{-a_{1}(t-\tau)}e^{-a_{2}\tau}d\tau\leq \int_{0}^{\frac{t}{2}}e^{-a_{1}(t-\tau)}e^{-a_{2}\tau}d\tau+\int_{\frac{t}{2}}^{t}e^{-a_{1}(t-\tau)}e^{-a_{2}\tau}d\tau\\
&\quad\quad\quad\quad\quad\quad\quad\quad\quad\leq \frac{1}{a_{2}}e^{-\frac{a_{1}}{2}t}+\frac{1}{a_{1}}e^{-\frac{a_{2}}{2}t}.
\end{split}
\end{equation}
The combination of (\ref{brho1}), (\ref{ub1}), (\ref{fb1})-(\ref{fb11}) and(\ref{vb1}) leads to $(\ref{decay11})_{3}$. The proof of Theorem \ref{decay} is completed.

~\

\noindent
\textbf{Acknowledgments.} The authors are grateful to the referees for the
valuable comments and the helpful suggestions on the manuscript.

The research of the paper is supported by National Natural Science Foundation of China (No.11931010, 11671384 and 11871047) and by the key research project of Academy for Multidisciplinary Studies, Capital Normal University, and by the Capacity Building for Sci-Tech Innovation-Fundamental Scientific Research Funds (No.007/20530290068).

\end{document}